%%
%% Copyright 2007, 2008, 2009 Elsevier Ltd
%%
%% This file is part of the 'Elsarticle Bundle'.
%% ---------------------------------------------
%%
%% It may be distributed under the conditions of the LaTeX Project Public
%% License, either version 1.2 of this license or (at your option) any
%% later version.  The latest version of this license is in
%%    http://www.latex-project.org/lppl.txt
%% and version 1.2 or later is part of all distributions of LaTeX
%% version 1999/12/01 or later.
%%
%% The list of all files belonging to the 'Elsarticle Bundle' is
%% given in the file `manifest.txt'.
%%

%% Template article for Elsevier's document class `elsarticle'
%% with numbered style bibliographic references
%% SP 2008/03/01
%%
%%
%%
%% $Id: elsarticle-template-num.tex 4 2009-10-24 08:22:58Z rishi $
%%
%%
\documentclass[preprint,12pt]{elsarticle}

%% Use the option review to obtain double line spacing
%% \documentclass[preprint,review,12pt]{elsarticle}

%% Use the options 1p,twocolumn; 3p; 3p,twocolumn; 5p; or 5p,twocolumn
%% for a journal layout:
%% \documentclass[final,1p,times]{elsarticle}
%% \documentclass[final,1p,times,twocolumn]{elsarticle}
%% \documentclass[final,3p,times]{elsarticle}
%% \documentclass[final,3p,times,twocolumn]{elsarticle}
%% \documentclass[final,5p,times]{elsarticle}
%% \documentclass[final,5p,times,twocolumn]{elsarticle}

%% if you use PostScript figures in your article
%% use the graphics package for simple commands
%% \usepackage{graphics}
%% or use the graphicx package for more complicated commands
%% \usepackage{graphicx}
%% or use the epsfig package if you prefer to use the old commands
%% \usepackage{epsfig}

%% The amssymb package provides various useful mathematical symbols
%\usepackage{amssymb}
%% The amsthm package provides extended theorem environments
\usepackage{amsthm}
\usepackage{amsmath,amssymb,txfonts}
\usepackage{color,bm,comment}
\usepackage[english]{babel}

\newtheorem{theorem}{Theorem}
\newtheorem{lemma}{Lemma}
\newtheorem{prop}{Proposition}
\newtheorem{cor}{Corollary}
\newtheorem{remark}{Remark}

\theoremstyle{definition}

%% The lineno packages adds line numbers. Start line numbering with
%% \begin{linenumbers}, end it with \end{linenumbers}. Or switch it on
%% for the whole article with \linenumbers after \end{frontmatter}.
%% \usepackage{lineno}

%% natbib.sty is loaded by default. However, natbib options can be
%% provided with \biboptions{...} command. Following options are
%% valid:

%%   round  -  round parentheses are used (default)
%%   square -  square brackets are used   [option]
%%   curly  -  curly braces are used      {option}
%%   angle  -  angle brackets are used    <option>
%%   semicolon  -  multiple citations separated by semi-colon
%%   colon  - same as semicolon, an earlier confusion
%%   comma  -  separated by comma
%%   numbers-  selects numerical citations
%%   super  -  numerical citations as superscripts
%%   sort   -  sorts multiple citations according to order in ref. list
%%   sort&compress   -  like sort, but also compresses numerical citations
%%   compress - compresses without sorting
%%
%% \biboptions{comma,round}

% \biboptions{}
%\journal{Nuclear Physics B}

\def\re{\mathbb{R}}
\def\N{\mathbb{N}}

\def\({\left(}
\def\){\right)}
\def\[{\left[}
\def\]{\right]}
\def\pd{\partial}
\def\lap{\Delta}

\def\ep{\varepsilon}
\def\w{\omega}
\def\la{\lambda}

\def\al{\alpha}
\def\ol{\overline}

\def\weakto{\rightharpoonup}

\newtheorem{ThmA}{Theorem A}

\begin{document}

\begin{frontmatter}

%% Title, authors and addresses

%% use the tnoteref command within \title for footnotes;
%% use the tnotetext command for the associated footnote;
%% use the fnref command within \author or \address for footnotes;
%% use the fntext command for the associated footnote;
%% use the corref command within \author for corresponding author footnotes;
%% use the cortext command for the associated footnote;
%% use the ead command for the email address,
%% and the form \ead[url] for the home page:
%%
%% \title{Title\tnoteref{label1}}
%% \tnotetext[label1]{}
%% \author{Name\corref{cor1}\fnref{label2}}
%% \ead{email address}
%% \ead[url]{home page}
%% \fntext[label2]{}
%% \cortext[cor1]{}
%% \address{Address\fnref{label3}}
%% \fntext[label3]{}

\title{On eigenvalue problems involving the critical Hardy potential and Sobolev type inequalities with logarithmic weights in two dimensions}

%% use optional labels to link authors explicitly to addresses:
%% \author[label1,label2]{<author name>}
%% \address[label1]{<address>}
%% \address[label2]{<address>}

\author[MS]{Megumi Sano}
\ead{smegumi@hiroshima-u.ac.jp}

\author[FT]{Futoshi Takahashi\corref{Takahashi}\fnref{label1}}
\ead{futoshi@omu.ac.jp}
\fntext[label1]{Corresponding author.}

\address[MS]{Laboratory of Mathematics, School of Engineering,
Hiroshima University, Higashi-Hiroshima, 739-8527, Japan}
\address[FT]{Department of Mathematics, Graduate School of Science, Osaka Metropolitan University, Sumiyoshi-ku, Osaka, 558-8585, Japan}

\begin{keyword}
%% keywords here, in the form: keyword \sep keyword
Second eigenvalue problem \sep Critical Hardy inequality \sep logarithmic weight

%% MSC codes here, in the form: \MSC code \sep code
%% or \MSC[2008] code \sep code (2000 is the default)
\MSC[2010] 35A23 \sep 35J20 \sep 35A08 
% 26D10: Inequalities involving derivatives and differential and integral operators
% 26D15: Inequalities for sums, series and integrals
% 35A23: Inequalities involving derivatives and differential and integral operators, inequalities for integrals
% 46E35: Sobolev spaces and other spaces of ``smooth" functions, embedding theorems, trace theorems
%\subjclass[2010]{Primary 35J20, 46B50; Secondary 46E30.}
%35J20: Variational methods for second-order elliptic equations
%46B50: Compactness in Banach (or normed) spaces
%46E30: Spaces of measurable functions ($L^p$-spaces, Orlicz spaces, the function spaces, Lorentz spaces, rearrangement invariant spaces, ideal spaces, etc.)
\end{keyword}

\date{\today}

\begin{abstract}
We consider the two-dimensional eigenvalue problem for the Laplacian with the Neumann boundary condition involving the critical Hardy potential. 
We prove the existence of the second eigenfunction and study its asymptotic behavior around the origin.
A key tool is the Sobolev type inequality with a logarithmic weight, which is shown in this paper as an application of the weighted nonlinear potential theory.
\end{abstract}

\end{frontmatter}

%\maketitle

%%
%% Start line numbering here if you want
%%
% \linenumbers

%%%%%%%%%%%%%%%%%%%%%%%%%%%%%%%%%%%%%%%%%%%%%%%%%%%%%%%%%%%%%%%%%%%%%%%%%%%%%%%%%%%%%%%%%%%%%%%%%%%%%%%%%%%%%%%%%%%
%
% \S1 Introduction
%
%%%%%%%%%%%%%%%%%%%%%%%%%%%%%%%%%%%%%%%%%%%%%%%%%%%%%%%%%%%%%%%%%%%%%%%%%%%%%%%%%%%%%%%%%%%%%%%%%%%%%%%%%%%%%%%%%%%
\section{Introduction}\label{S Intro}

Let $\Omega \subset \re^2$ be a smooth bounded domain with $0 \in \overline{\Omega}$. 
For simplicity, we assume $\sup_{x \in \Omega} |x| =1$ without loss of generality. 
Let $a \ge 1$.
In this paper, we consider the following linear eigenvalue problem 
\begin{align*}
(N)\,
	\begin{cases}
	-\Delta u = \la \frac{u}{|x|^2 \( \log \frac{a}{|x|} \)^2}\quad &\text{in}\,\, \Omega, \\
	\frac{\pd u}{\pd \nu} = 0 \quad &\text{on} \,\, \pd \Omega,
	\end{cases}
\end{align*}
here $\nu$ denotes the unit outer normal vector to $\pd\Omega$.
The problem stems from the critical Hardy inequality on a bounded domain $\Omega \subset \re^2$
for functions in the Sobolev space $H^1_0(\Omega)$: for any $u \in H^1_0(\Omega)$, it holds that
\begin{equation}
\label{H^1_0 CH}
	\frac{1}{4} \int_{\Omega} \frac{u^2}{|x|^2 \( \log \frac{a}{|x|} \)^2} \,dx \le \int_{\Omega} |\nabla u |^2 \,dx. 
\end{equation}
Moreover, the constant $\frac{1}{4}$ on the left-hand side is best possible and is not attained. 
We recall that the Sobolev space $H^1(\Omega)$ is a set of functions $u \in L^2(\Omega)$ such that its distributional gradient $\nabla u$ is also in $L^2(\Omega)$.
$H^1(\Omega)$ is a Hilbert space with an inner product $(u, v)_{H^1(\Omega)} = \int_{\Omega} ( \nabla u \cdot \nabla v + u v ) \, dx$, and
$H^1_0(\Omega)$ is a closure of $C_0^{\infty}(\Omega)$ with respect to the norm $\| u \|_{H^1(\Omega)} = (u, u)_{H^1(\Omega)}^{1/2}$. 
For the inequality \eqref{H^1_0 CH}, we refer the readers to \cite{II}, \cite{S(JDE)}, \cite{ST} \cite{ST(HT)} and the references there in.

In a higher dimensional case, we know the subcritical Hardy inequality for functions in $H^1_0(\Omega)$:
\begin{equation*}
	H_N \int_{\Omega} \frac{u^2}{|x|^2} \,dx \le \int_{\Omega} |\nabla u |^2 \,dx 
\end{equation*}
holds for any $u \in H^1_0(\Omega)$, here $\Omega$ is a bounded domain in $\re^N$, $N \ge 3$, with $0 \in \Omega$.
The constant $H_N = \(\frac{N-2}{2}\)^2$ is optimal and is never attained by a non zero function in $H^1_0(\Omega)$.
In \cite{CPR}, Chabrowski, Peral and Ruf consider the linear eigenvalue problem
\begin{align*}
	\begin{cases}
	-\Delta u = \la \frac{u}{|x|^2}\quad &\text{in}\,\, \Omega \subset \re^N, N \ge 3, \\
	\frac{\pd u}{\pd \nu} = 0 \quad &\text{on} \,\, \pd \Omega.
	\end{cases}
\end{align*}
Clearly, the first eigenvalue is $\la = 0$ and constant functions are the first eigenfunctions.
To seek the nontrivial solution in $H^1(\Omega)$, the authors in \cite{CPR} introduce the minimization problem
\begin{align*}
	\la_H = \inf \left\{  \,\frac{\int_{\Omega} |\nabla u|^2\,dx }{\int_{\Omega} \frac{|u|^2}{|x|^2} \,dx} \,\,\middle| \,\, u \in H^1 (\Omega) \setminus \{ 0\}, 
	\, \int_{\Omega} \frac{u}{|x|^2} \,dx = 0\, \right\}, 
\end{align*}
and prove that if $\la_H < H_N$, then $\la_H$ is attained and the minimizer corresponds the second eigenfunction of the above eigenvalue problem. %with the second eigenvalue $\la_H$.
Also the authors obtain several examples of domains such that the condition $\la_H < H_N$ holds true. 
Especially, they establish the existence of the second eigenfunction on balls in $\re^N$, $N \ge 7$.
Moreover, they study the asymptotic behavior of the second eigenfunctions around the origin in the case $0 \in \Omega$.
To obtain the asymptotic estimate of the second eigenfunction near the origin, they use De Giorgi-Nash-Moser type procedure and the Caffarelli-Kohn-Nirenberg inequality \cite{CKN}.

The aim of this paper is to extend the results in \cite{CPR} to the two-dimensional problem (N).
Let $a \ge 1$ and $0 \in \overline{\Omega} \subset \re^2$. 
We consider the minimization problem
\begin{align}
\label{la_a}
	\la_a = \inf \left\{  \,\frac{\int_{\Omega} |\nabla u|^2\,dx }{\int_{\Omega} \frac{|u|^2}{|x|^2 \( \log \frac{a}{|x|} \)^2} \,dx} \,\,\middle| \,\, u \in H^1 (\Omega) \setminus \{ 0\}, 
	\, \int_{\Omega} \frac{u}{|x|^2 \( \log \frac{a}{|x|} \)^2} \,dx = 0\, \right\}. 
\end{align}
%We easily see 
%\tc{red}{show} 
%that $\la_a$ is continuous with respect to $a$ \tc{red}{(Where do you show it?)} and 
We show $\la_a > 0$ for $a > 1$, see (\ref{average CH}). 
We seek for sufficient conditions to assure the existence of minimizers, which yiedls the second eigenfunction of the problem (N).
Our sufficient condition claims that if $a > 1$ and $\la_a < \frac{1}{4}$, then $\la_a$ is attained by a non trivial function in $H^1(\Omega)$. 
We also study the asymptotic behavior near the origin of the second eigenfunctions.
We remark that, unlike \cite{CPR}, we can treat the case $0 \in \pd\Omega$ too.
Furthermore, since our Hardy potential involves the logarithmic weights, 
it is difficult to control the weights by Caffarelli-Kohn-Nirenberg type inequality, which was useful for treating power type weights.
Therefore, we need to establish the Sobolev type inequality with logarithmic weights.
Combining this inequality with the De Giorgi-Nash-Moser procedure, we obtain the expected asymptotic behavior of the second eigenfunctions.
To obtain the Sobolev type inequality with logarithmic weights, we exploit weighted nonlinear potential theory by \cite{Adams}.
We believe that this part is also interesting in itself.

In the following, $L^s(\Omega)$ will denote the standard Lebesgue spaces.  
Also for a given nonnegative weight function $\w$ and $1 \le s < \infty$, the weighted Lebesgue space $L^s(\Omega, \w(x)dx)$ is the set of functions $u$ such that $\int_{\Omega} |u|^s w(x) dx < \infty$.
$B_r$ will denote a ball in $\re^2$ of radius $r$ with center the origin.
``$\to$" and ``$\weakto$" will denote the strong and weak convergence in Banach spaces, respectively.
(Possibly different) general positive constants are denoted by $C$.

%%%%%%%%%%%%%%%%%%%%%%%%%%%%%%%%%%%%%%%%%%%%%%%%%%%%%%%%%%%%%%%%%%%%%%%%%%%%%%%%%%%%%%%%%%%%%%%%%%%%%%%%%%%%%%%%%%%
%
% \S2 The critical Hardy type inequalities for $H^1(\Omega)$.
%
%%%%%%%%%%%%%%%%%%%%%%%%%%%%%%%%%%%%%%%%%%%%%%%%%%%%%%%%%%%%%%%%%%%%%%%%%%%%%%%%%%%%%%%%%%%%%%%%%%%%%%%%%%%%%%%%%%%
\section{The critical Hardy type inequalities for $H^1(\Omega)$.}\label{S CH}

Let $\Omega \subset \re^2$ be a smooth bounded domain with $0 \in \overline{\Omega}$ and $\sup_{x \in \Omega} |x| =1$.
In \S \ref{S Sob log}, we prove the following Hardy-Sobolev type inequality with logarithmic weights.

%%%%%%%%%%%%%%%%%%%%%%%%%%%%%%%%%%%%%%%%%%%%%%%%%%%%%%%%%%%%%%%%%%%%%%%%%%%%%%%%%%%%%%%%%%%%%%%%%%%%%%%%%%%%%%%%%%%
%
% Theorem \label{Thm ineq log}: Sobolev type inequality with logarithmic weights
%
%%%%%%%%%%%%%%%%%%%%%%%%%%%%%%%%%%%%%%%%%%%%%%%%%%%%%%%%%%%%%%%%%%%%%%%%%%%%%%%%%%%%%%%%%%%%%%%%%%%%%%%%%%%%%%%%%%%
\begin{theorem}
\label{Thm ineq log}
Let $a>1$, $p \ge 2$, $B<1$, $A \ge 1+ \frac{p}{2} (1-B)$.
Then there exists a positive constant $C_{p, A, B}$ such that the inequality
\begin{align}\label{ineq log}
	C_{p, A, B} \( \int_{\Omega} \frac{|u|^p}{|x|^2 (\log \frac{a}{|x|})^A} \,dx \)^{\frac{2}{p}} \le \int_\Omega \( \log \frac{a }{|x|} \)^{B} |\nabla u|^2 \,dx
\end{align}
holds for any $u \in C_c^\infty (\Omega)$.
\end{theorem}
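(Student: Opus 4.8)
The plan is to read~\eqref{ineq log} as a weighted trace inequality
$$
\|u\|_{L^p(\Omega,\, d\mu)} \;\le\; C\,\|u\|_{\dot W^{1,2}(\Omega,\, w\,dx)}, \qquad
d\mu := \frac{dx}{|x|^{2}\big(\log\frac{a}{|x|}\big)^{A}}, \qquad
w(x):=\Big(\log\frac{a}{|x|}\Big)^{B},
$$
and to establish it through the capacitary criterion supplied by the weighted nonlinear potential theory of Adams~\cite{Adams}. The preliminary observations are that on $\Omega\subset B_{1}\subset B_{a}$ one has $0<\log a\le\log\frac{a}{|x|}$, so $w$ is a positive, locally bounded weight, and that both positive and negative real powers of a logarithm are Muckenhoupt $A_{2}$ weights; consequently the associated weighted capacities $\mathrm{cap}_{1,2,w}$ are doubling and, on balls, comparable to the weighted $2$-capacity, which one can compute explicitly by solving a radial ordinary differential equation. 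Extending $u\in C_{c}^{\infty}(\Omega)$ by zero and localizing to $B_{1}$, it then suffices --- and this is exactly where the hypothesis $p\ge 2$ enters, so that in Adams' framework the exponent on $\mu$ dominates the exponent $2$ on the Dirichlet integral --- to verify the ball condition
$$
\mu\big(B_{r}(x_{0})\big)^{2/p} \;\le\; C\,\mathrm{cap}_{1,2,w}\big(B_{r}(x_{0})\big) \qquad\text{for all balls } B_{r}(x_{0}),
$$
under which Adams' trace theorem yields the embedding, hence~\eqref{ineq log}.

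The verification of the ball condition splits according to the position of $B_{r}(x_{0})$ relative to the origin. When $B_{r}(x_{0})$ lies away from the origin at its own scale (say $|x_{0}|\ge 2r$), the two weights vary by at most a bounded factor over a ball of radius comparable to $\mathrm{dist}(0,x_{0})$, so after freezing them the ball condition reduces to the classical unweighted two-dimensional capacitary estimate at the scale $r/|x_{0}|$, which is readily verified using the hypothesis on $A$. The only delicate balls are those lying in a fixed small neighborhood of the origin; since $|x_{0}|<2r$ there, each such ball is contained in some $B_{\rho}(0)$ with $\rho$ comparable to its radius, so by monotonicity of $\mu$ and a standard capacity comparison it is enough to treat the centered balls $B_{\rho}(0)$. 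A direct computation gives $\mu(B_{\rho}(0))=\frac{2\pi}{A-1}\big(\log\frac{a}{\rho}\big)^{1-A}$, while the capacitary potential of $B_{\rho}(0)$ in $B_{1}$ --- whose leading behavior solves $\big(s(\log\frac{a}{s})^{B}v'(s)\big)'=0$, and here $B<1$ is used to integrate $\int\frac{ds}{s(\log\frac{a}{s})^{B}}$ --- yields $\mathrm{cap}_{1,2,w}(B_{\rho}(0))\sim\big(\log\frac{a}{\rho}\big)^{B-1}$ as $\rho\to0$, with a positive lower bound when $\rho$ is bounded away from $0$. Comparing the two, the ball condition near the origin becomes $\big(\log\frac{a}{\rho}\big)^{2(1-A)/p}\lesssim\big(\log\frac{a}{\rho}\big)^{B-1}$ as $\log\frac{a}{\rho}\to\infty$; since both exponents are negative this is equivalent to $\frac{2(1-A)}{p}\le B-1$, that is, to $A\ge 1+\frac{p}{2}(1-B)$ --- precisely the standing hypothesis. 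This closes the argument.

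I expect the principal difficulty to be potential-theoretic bookkeeping rather than the explicit estimates: pinning down the precise statement of Adams' weighted trace theorem in the borderline situation where the ambient dimension and the Dirichlet exponent both equal $2$ (so that points carry zero capacity and one must work on the logarithmic scale, with relative capacities of annuli), verifying that $w=(\log\frac{a}{|x|})^{B}$ belongs to the admissible weight class, and confirming that for this radial, doubling weight it is enough to test the capacitary condition on balls. A convenient device for organizing the origin case is the logarithmic change of variables $x=ae^{-t}\omega$, $t\in[\log a,\infty)$, $\omega\in S^{1}$, under which~\eqref{ineq log} becomes, after elementary substitutions,
$$
C\Big(\int_{\mathcal C}|v|^{p}\,t^{-A}\,dt\,d\omega\Big)^{2/p} \;\le\; \int_{\mathcal C}t^{B}\,|\nabla_{t,\omega}v|^{2}\,dt\,d\omega, \qquad \mathcal C=[\log a,\infty)\times S^{1},
$$
where the origin corresponds to the end $t=\infty$, the relevant capacitary test sets become the ends $\{t>T\}\times S^{1}$, and the comparison $\mathrm{cap}(\{t>T\})\sim T^{B-1}$ against $\mu(\{t>T\})\sim T^{1-A}$ reproduces the same threshold $A\ge 1+\frac{p}{2}(1-B)$.
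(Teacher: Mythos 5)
Your overall strategy --- recast \eqref{ineq log} as a trace inequality for the weighted Dirichlet form, invoke Adams' weighted nonlinear potential theory, and verify a ball-by-ball condition with the bookkeeping $\mu(B_\rho(0))=\tfrac{2\pi}{A-1}(\log\tfrac{a}{\rho})^{1-A}$ against $(\log\tfrac{a}{\rho})^{B-1}$, yielding exactly the threshold $A\ge 1+\tfrac{p}{2}(1-B)$ --- is essentially the route the paper takes for $0<B<1$ (the paper handles $B<0$ separately and more elementarily, via Talenti-type rearrangement combined with the radial Hardy inequality \eqref{Hardy with log} and a radial lemma, and disposes of $p=2$ and $B=0$ by citation). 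However, there is a genuine gap at precisely the point you yourself label the ``principal difficulty'': the criterion you invoke, namely that $\mu(B)^{2/p}\lesssim \mathrm{cap}_{1,2,w}(B)$ for all balls implies the embedding, is never pinned down, and in this two-dimensional borderline situation it fails as literally stated if the capacity is taken relative to $\re^2$ with $w=(\log\tfrac{a}{|x|})^{B}$ extended boundedly outside $B_1$: then every ball has zero weighted $(1,2)$-capacity (equivalently, the quantity $J[\w](x,r)$ in Theorem A diverges because $\int_{B_t(x)}\w^{-1}\,dy\sim t^2$ as $t\to\infty$), so no criterion of this form can be applied. The paper resolves exactly this by choosing the extension of the weight at infinity, $\w(x)=|x|^{\gamma}(\log a)^{B}$ with $0<\gamma<2$ as in \eqref{weight omega}, which makes $J[\w]$ finite and Adams' Theorem A applicable verbatim; if you prefer to work with relative (condenser) capacities in $B_1$, as your computation of $\mathrm{cap}(B_\rho(0))\sim(\log\tfrac{a}{\rho})^{B-1}$ implicitly does, you need a correspondingly formulated Maz'ya-type criterion on a bounded domain, which is not what Adams' theorem provides and would itself have to be stated and proved. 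Until one of these is carried out, the central step of your argument is an appeal to an unspecified theorem.

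Two further points. First, the quoted potential-theoretic criterion requires $p>2$, while the statement includes $p=2$; ball testing is in general insufficient when the two exponents coincide, so $p=2$ needs a separate (easy) argument --- for instance it follows from \eqref{Hardy with log}, since $(\log\tfrac{a}{|x|})^{-A}\le C(\log\tfrac{a}{|x|})^{B-2}$ on $\Omega$ when $A\ge 2-B$ --- or a citation, as in the paper. Second, the items you pass over quickly (membership of the extended weight in the Muckenhoupt $A_2$ class, and the verification of the testing condition for balls near and away from the origin, including the regime $|x|\approx 1$ where $\Omega$ meets the unit sphere) are exactly the content of Lemmas \ref{claim1} and \ref{claim2}, whose proofs are multi-case computations; your ``freeze the weights'' reduction away from the origin is sound in spirit, but it must be run with the globally extended weight \eqref{weight omega}, not with $w$ alone, and the uniformity of the resulting constants is what those case analyses actually establish.
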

Note that if we set $p = 2$, $A = 2$, $B = 0$, then \eqref{ineq log} is nothing but the critical Hardy inequality \eqref{H^1_0 CH}.
Also we remark that if $A=1+ \frac{p}{2} (1-B)$ and $\Omega = B_1$, 
then the inequality (\ref{ineq log}) has the scale invariance under the following scaling 
$C_c^\infty (B_1) \ni u \mapsto u_{\la}$ for any $\la \le 1$, where 
\begin{align*}
	u_\la (x) = 
	\begin{cases}
	\la^{-\frac{1-B}{2}} u \( \( \frac{|x|}{a} \)^{\la -1} x \) 
	\,\,&\text{for}\,\, x \in B_{a^{\frac{\la-1}{\la}}}, \\
	0 &\text{for} \,\, x \in B_1 \setminus B_{a^{\frac{\la-1}{\la}}}.
\end{cases}
\end{align*}
When $A< 1+ \frac{p}{2} (1-B)$, \eqref{ineq log} does not have this scale invariance and by letting $\la \to 0$, 
we can easily show that the inequality does not hold when $A< 1+ \frac{p}{2} (1-B)$. 

The proof of Theorem \ref{Thm ineq log} is postponed to \S \ref{S Sob log}.

\vspace{1em}
To prove the existence of the second eigenfunctions for the problem (N), 
we need the critical Hardy inequality for functions in $H^1(\Omega)$, and also for functions in $H^1(\Omega)$ with average zero.
Also to treat the case $0 \in \pd\Omega$, we need the next lemma.

%%%%%%%%%%%%%%%%%%%%%%%%%%%%%%%%%%%%%%%%%%%%%%%%%%%%%%%%%%%%%%%%%%%%%%%%%%%%%%%%%%%%%%%%%%%%%%%%%%%%%%%%%%%%%%%%%%%
%
% Lemma \label{Lemma ineq log}: Lemma 2.1 in Han-Liu \cite{HL} 
%
%%%%%%%%%%%%%%%%%%%%%%%%%%%%%%%%%%%%%%%%%%%%%%%%%%%%%%%%%%%%%%%%%%%%%%%%%%%%%%%%%%%%%%%%%%%%%%%%%%%%%%%%%%%%%%%%%%%
\begin{lemma}\label{Lemma ineq log}
Let $x= (x_1, x_2) \in \re^2$, $h \in C^1 (\re)$, $0 < r \le 1$ and $h(0) = 0$, and $h'(0) = 0$. 
Set $B^h_r = B_r \cap \{ x \in \re^2 \,| \, x_2 > h (x_1) \}$.
Let $a, p, A, B$ as in Theorem \ref{Thm ineq log}.
Then, for any $\ep > 0$, there exists $\delta >0$ such that if $|h'(x_1)| \le \delta$ for any $x_1 \in (-r, r)$, 
then the inequality
\begin{align*}
	2^{\frac{2}{p} -1} C_{p, A, B} \( \int_{B_r^h} \frac{|u|^p}{|x|^2 (\log \frac{a}{|x|})^A} \,dx \)^{\frac{2}{p}} 
	\le (1+ \ep) \int_{B_r^h} \( \log \frac{a }{|x|} \)^{B} |\nabla u|^2 \,dx
\end{align*}
holds for any $u \in H^1 (B_r)$ with ${\rm supp}\, u \subset B_r$, where $C_{p, A, B}$ is given in Theorem \ref{Thm ineq log}.
Especially, we have
\begin{align*}
	\int_{B^h_r} \frac{u^2}{|x|^2 \( \log \frac{a}{|x|} \)^2} \,dx \le (4+\ep) \int_{B^h_r} |\nabla u |^2 \,dx
\end{align*}
for any $u \in H^1 (B_r)$ with ${\rm supp}\, u \subset B_r$.
\end{lemma}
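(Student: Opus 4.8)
The plan is to reduce the asserted inequality on the curved domain $B_r^h$ to the inequality of Theorem \ref{Thm ineq log} on a ball, by first flattening the curved part of $\pd B_r^h$ with a near-identity shear and then reflecting evenly across the resulting flat piece of the boundary. By mollification it suffices to prove the inequality for $u \in C^\infty(\overline{B_r})$ whose support lies at positive distance from the circular arc $\pd B_r \cap \overline{B_r^h}$ (the support may, however, reach the graph $\{x_2=h(x_1)\}$), the case $u\in H^1(B_r)$ with $\mathrm{supp}\,u\subset B_r$ following by density. Introduce the shear $\Phi(x_1,x_2):=(x_1,x_2-h(x_1))=:y$, a $C^1$ diffeomorphism of $\re^2$ with unit Jacobian satisfying $\Phi(\{x_2>h(x_1)\})=\{y_2>0\}$, and set $v:=u\circ\Phi^{-1}$ on $\Omega':=\Phi(B_r^h)\subset\{y_2>0\}$. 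Since $h(0)=0$ and $|h'|\le\delta$ on $(-r,r)$ we have $|h(x_1)|\le\delta|x_1|\le\delta|x|$, so that $|y|=(1+O(\delta))|x|$ uniformly on $B_r\setminus\{0\}$; and because $a>1$ keeps $\log\frac a{|x|}\ge\log a>0$ there, this upgrades to $\log\frac a{|y|}=(1+O(\delta))\log\frac a{|x|}$, again uniformly. Finally $\pd_{y_1}v=\pd_{x_1}u+h'\,\pd_{x_2}u$ and $\pd_{y_2}v=\pd_{x_2}u$ give $|\nabla v|^2=(1+O(\delta))|\nabla u|^2$ pointwise under $\Phi$. (Only $h(0)=0$ and the smallness of $h'$ are used here; $h'(0)=0$ is not needed for this lemma.)

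Next, extend $v$ by even reflection: with $\tilde\Omega':=\Omega'\cup\{(y_1,-y_2):(y_1,y_2)\in\Omega'\}$ put $\tilde v(y_1,y_2):=v(y_1,|y_2|)$, so that $\tilde v\in H^1(\re^2)$ with $\mathrm{supp}\,\tilde v$ a compact subset of $B_\rho$, $\rho:=(1+O(\delta))r<a$. Since $|y|$ and both integrands are even in $y_2$,
\begin{align*}
\int_{\tilde\Omega'}\frac{|\tilde v|^p}{|y|^2(\log\frac a{|y|})^A}\,dy &= 2\int_{\Omega'}\frac{|v|^p}{|y|^2(\log\frac a{|y|})^A}\,dy,\\
\int_{\tilde\Omega'}\Big(\log\tfrac a{|y|}\Big)^B|\nabla\tilde v|^2\,dy &= 2\int_{\Omega'}\Big(\log\tfrac a{|y|}\Big)^B|\nabla v|^2\,dy.
\end{align*}
Applying Theorem \ref{Thm ineq log} to $\tilde v$ (its constant $C_{p,A,B}$ being independent of the domain, since for compactly supported test functions the inequality is one on $\re^2$), substituting these identities, and dividing by $2$, we obtain
\begin{align*}
2^{\frac2p-1}C_{p,A,B}\Big(\int_{\Omega'}\frac{|v|^p}{|y|^2(\log\frac a{|y|})^A}\,dy\Big)^{2/p}\le\int_{\Omega'}\Big(\log\tfrac a{|y|}\Big)^B|\nabla v|^2\,dy.
\end{align*}
Since $\Phi$ has unit Jacobian, the pointwise estimates of the first paragraph turn the two integrals over $\Omega'$ into the corresponding integrals over $B_r^h$ at the cost of factors $1+O(\delta)$; choosing $\delta$ small enough, depending only on $\ep,p,A,B,a$, so that these are absorbed into $1+\ep$ yields the first claimed inequality. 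Taking $p=2$, $A=2$, $B=0$ — for which $2^{2/p-1}=1$ and $C_{2,2,0}=\tfrac14$ by \eqref{H^1_0 CH} (see the remark after Theorem \ref{Thm ineq log}) — and relabelling $\ep$ gives the ``especially'' statement.

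The hard part is the uniform control, as $\delta\to0$, of the distortion factors introduced by $\Phi$, above all the ratio $\log\frac a{|y|}\big/\log\frac a{|x|}$; this is precisely where the hypothesis $a>1$ enters, as it bounds $\log\frac a{|x|}$ below by $\log a>0$ on $B_r$ (for $a=1$ the ratio degenerates near $|x|=r$ and the argument breaks). The remaining ingredients — the unit Jacobian of $\Phi$, the $H^1$-stability of even reflection, and the bookkeeping of the factor $2^{2/p-1}$ — are routine.
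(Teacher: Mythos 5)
Your proposal is correct and follows essentially the same route as the paper's own proof: flatten the boundary with the shear $y=(x_1,x_2-h(x_1))$ (unit Jacobian), reduce to Theorem \ref{Thm ineq log} by even reflection across the flattened boundary (which is exactly where the factor $2^{\frac{2}{p}-1}$ comes from), and absorb the $1+O(\delta)$ distortion of $|\nabla u|^2$, of $|x|$, and of the logarithmic weights (using $\log\frac{a}{|x|}\ge\log a>0$, i.e. $a>1$) into the factor $1+\ep$, the ``especially'' part being the case $p=2$, $A=2$, $B=0$ with constant $\tfrac14$ from \eqref{H^1_0 CH}. The only differences are presentational (you shear first and then reflect, while the paper treats the flat case by reflection first and then reduces the curved case to it; your two-sided comparison of $\log\frac{a}{|y|}$ and $\log\frac{a}{|x|}$ handles all $B<1$ at once instead of the paper's case split), and your gloss that the sheared support may protrude slightly beyond $B_r$ so that Theorem \ref{Thm ineq log} is invoked on a marginally larger ball is the same gloss the paper makes in its step (II).
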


\begin{proof}
We follow the argument of the proof of \cite{HL} Lemma 2.1. 

\noindent
(I)\hspace{1em} Assume that $h(x_1) \equiv 0$. 
Since the value of $u(x)$ are irrelevant for $x_2 < 0$, we may suppose that $u(x)$ is even in $x_2$. 
By the inequality (\ref{ineq log}) in Theorem \ref{Thm ineq log}, 
we have
\begin{align*}
	C_{p, A, B} \( \int_{B^0_r} \frac{|u|^p}{|x|^2 \( \log \frac{a}{|x|} \)^A} \,dx \)^{\frac{2}{p}} 
	&=C_{p, A, B} \( \frac{1}{2} \int_{B_r} \frac{|u|^p}{|x|^2 \( \log \frac{a}{|x|} \)^A} \,dx \)^{\frac{2}{p}}  \\
	&\le 2^{-\frac{2}{p}} \int_{B_r} \( \log \frac{a}{|x|} \)^B |\nabla u |^2 \,dx\\
	&= 2^{1-\frac{2}{p}} \int_{B^0_r} \( \log \frac{a}{|x|} \)^B |\nabla u |^2 \,dx.
\end{align*}

\noindent
(II)\hspace{1em} 
We consider the case where $h(x_1) \not\equiv 0$. 
Then we set $y_1 = x_1, y_2 = x_2 - h(x_1)$ and $\tilde{u} (y_1, y_2) = u(x_1, x_2)$. 
From (I), we have
\begin{align}\label{tilde u}
	 2^{\frac{2}{p} -1} C_{p, A, B} \( \int_{B^0_r} \frac{|\tilde{u}|^p}{|y|^2 \( \log \frac{a}{|y|} \)^A} \,dy \)^{\frac{2}{p}} \le \int_{B^0_r}\( \log \frac{a}{|y|} \)^B |\nabla \tilde{u} |^2 \,dy.
\end{align}
Direct calculation implies that 
\begin{align}\label{right}
	|\nabla \tilde{u} (y_1, y_2)|^2 
	&= \left| \frac{\pd u}{\pd x_1} + \frac{\pd u}{\pd x_1} h' (x_1) \right|^2 + \left| \frac{\pd u}{\pd x_2} \right|^2 \notag \\
	&= | \nabla u (x_1, x_2) |^2 + 2 \frac{\pd u}{\pd x_1} \frac{\pd u}{\pd x_2} h'(x_1) + \left| \frac{\pd u}{\pd x_2} \right|^2 |\,h'(x_1)|^2 \notag \\
	&\le (1+ \delta)^2 | \nabla u (x_1, x_2)|^2.
\end{align}
Also, for any $B \in [0, 1)$
\begin{align}\label{right2}
	\( \log \frac{a}{|y|} \)^B 
	&= \( \log \frac{a}{\sqrt{|x|^2 + |h(x_1)|^2 - 2x_2 h(x_1)}} \)^B \notag \\
	&\le \( \log \frac{a}{\sqrt{|x|^2  - 2 \delta |x_2| |h(x_1)|}} \)^B \notag \\
	&\le \( \log \frac{a}{|x|} \)^B \( 1+ \( \frac{\log \frac{1}{1-2\delta}}{2 \log a} \)^B \) 
	\le \( 1+ C \delta^B \) \( \log \frac{a}{|x|} \)^B
\end{align}
and for any $B \le 0$
\begin{align}\label{right3}
	\( \log \frac{a}{|y|} \)^B 
	\le \( \log \frac{a}{\sqrt{|x|^2  + \delta^2 |x|^2 + 2 \delta |x|^2}} \)^B 
	\le \( \log \frac{a}{|x|} \)^B.
\end{align}
On the other hand, we have
\begin{align}\label{left}
	\int_{B^0_r} \frac{|\tilde{u}|^p}{|y|^2 \( \log \frac{a}{|y|} \)^A} \,dy
	= \int_{B^h_r} \frac{|u|^p}{ \( |x|^2 + |h(x_1)|^2 - 2x_2 h(x_1) \) \( \log \frac{a}{\sqrt{|x|^2 + |h(x_1)|^2 - 2x_2 h(x_1)}} \)^A} \,dx.
\end{align}
Since $h(0)=|h'(0)|=0$ and $|h'(x_1)| \le \delta$ for any $x_1 \in (-r, r)$, we have $|h (x_1)| \le \delta |x_1|$ for any $x_1 \in (-r, r)$. 
Also note that $\( \log \frac{a}{\sqrt{|x|^2 + |h(x_1)|^2 - 2x_2 h(x_1)}} \)^A \ge \( \log \frac{a}{|x|} \)^A$ for any $x_1 \in (-r, r)$. 
Thus we have
\begin{align*}
	&\left| \frac{1}{ \( |x|^2 + |h(x_1)|^2 - 2x_2 |h(x_1)| \) \( \log \frac{a}{\sqrt{|x|^2 + |h(x_1)|^2 - 2x_2 h(x_1)}} \)^A} - \frac{1}{|x|^2 \( \log \frac{a}{|x|} \)^A} \right| \\
	&\le \left| \frac{1}{\( |x|^2 + |h(x_1)|^2 - 2|x_2| |h(x_1)| \)} - \frac{1}{|x|^2} \right| \( \log \frac{a}{|x|} \)^{-A} \\
	&= \frac{(2|x_2| + |h(x_1)| ) |h(x_1)|}{|x|^2(|x|^2-2|x_2||h(x_1)|} \( \log \frac{a}{|x|} \)^{-A} \\
	&\le \frac{(2|x_2| + \delta |x_1| ) |h(x_1)| |x_1|}{|x|^2(|x|^2-2\delta |x_2||x_1|)} \( \log \frac{a}{|x|} \)^{-A} \le \frac{C \delta}{|x|^2} \( \log \frac{a}{|x|} \)^{-A}.
\end{align*}
%Since $h(0)=|h'(0)|=0$ and $|h'(x_1)| \le \delta$ for any $x_1 \in (-r, r)$, we have $|h (x_1)| \le \delta |x_1|$ for any $x_1 \in (-r, r)$. 
%Then, we have
%\begin{align*}
%	&\left| \frac{1}{ \( |x|^2 + |h(x_1)|^2 - 2x_2 h(x_1) \) \( \log \frac{a}{\sqrt{|x|^2 + |h(x_1)|^2 - 2x_2 h(x_1)}} \)^A} - \frac{1}{|x|^2 \( \log \frac{a}{|x|} \)^A} \right| \\
%	&\le \frac{|x|^2 \left| \( \log \frac{a}{|x|} \)^A 
%	- \( \log \frac{a}{\sqrt{|x|^2 + |h(x_1)|^2 - 2x_2 h(x_1)}} \)^A  \right| + \( \log \frac{a}{\sqrt{|x|^2 + |h(x_1)|^2 - 2x_2 h(x_1)}} \)^A 
%	\left| |h(x_1)|^2 - 2x_2 h(x_1) \right| }{ |x|^2 \( \log \frac{a}{|x|} \)^A \( |x|^2  - 2\delta |x_2| |x_1|) \) \( \log \frac{a}{\sqrt{|x|^2 + \delta^2 |x_1|^2 - 2\delta |x_2| |x_1|}} \)^A} \\
%	&\le \frac{\left| \frac{A}{2} \left[ \log \(1 + \frac{|h(x_1)|^2 -2 x_2 h(x_1)}{|x|^2} \) \right]  
%	\left[ \log \frac{a^2}{|x| \sqrt{|x|^2 + |h(x_1)|^2 - 2x_2 h(x_1)}} \right]^{A-1}  \right| + \( \log \frac{a}{\sqrt{|x|^2 + |h(x_1)|^2 - 2x_2 h(x_1) }} \)^A ( \delta^2 + 2\delta ) }{ |x|^2 \( \log \frac{a}{|x|} \)^A \( 1  - 2\delta \) \( \log \frac{a}{\sqrt{|x|^2 + |h(x_1)|^2 - 2x_2 h(x_1)}} \)^A} \\
%	&\le \frac{C}{|x|^2 \( \log \frac{a}{|x|} \)^A} \left[ \, \log \( 1+ \delta^2 + 2 \delta \) \frac{ \left[ \log \frac{a^2}{|x|^2 \sqrt{1-2\delta}} \right]^{A-1} }{ \( \log \frac{a}{|x| \sqrt{1+ \delta^2 + 2\delta}} \)^A } + \delta \,\right]
%\le  \frac{C \delta}{|x|^2 \( \log \frac{a}{|x|} \)^A} 
%\end{align*}
for any $x \in B_r^h$. 
Therefore, from (\ref{left}), we have
\begin{align}\label{left2}
	\int_{B^0_r} \frac{|\tilde{u}|^p}{|y|^2 \( \log \frac{a}{|y|} \)^A} \,dy
	\ge (1- C\delta) \int_{B^h_r} \frac{|u|^p}{|x|^2 \( \log \frac{a}{|x|} \)^A} \,dx.
\end{align}
From (\ref{tilde u}-\ref{right3}) and (\ref{left2}), if we choose $\delta_0 > 0$ such that $(1+ \delta)^2 (1+ C\delta^B) (1-C \delta)^{-\frac{2}{p}} \le 1+ \ep$ for any $\delta \in (0, \delta_0)$. 
Then we have
\begin{align*}
	2^{\frac{2}{p} -1} C_{p, A, B} \( \int_{B^h_r} \frac{|u|^p}{|x|^2 \( \log \frac{a}{|x|} \)^A} \,dx \)^{\frac{2}{p}} \le (1+ \ep) \int_{B^h_r} \( \log \frac{a}{|x|} \)^B |\nabla u|^2 \,dx.
\end{align*}
\end{proof}

Next is the critical Hardy inequality for functions in $H^1(\Omega)$.

%%%%%%%%%%%%%%%%%%%%%%%%%%%%%%%%%%%%%%%%%%%%%%%%%%%%%%%%%%%%%%%%%%%%%%%%%%%%%%%%%%%%%%%%%%%%%%%%%%%%%%%%%%%%%%%%%%%
%
% Proposition \label{prop H^1 CH}: the critical Hardy inequality to $H^1(\Omega)$ functions.
%
%%%%%%%%%%%%%%%%%%%%%%%%%%%%%%%%%%%%%%%%%%%%%%%%%%%%%%%%%%%%%%%%%%%%%%%%%%%%%%%%%%%%%%%%%%%%%%%%%%%%%%%%%%%%%%%%%%%
\begin{prop}\label{prop H^1 CH}
Assume $0 \in \ol{\Omega}$.
For any $\ep > 0$, there exists a constant $C= C(\ep, a, \Omega) > 0$ such that the inequality
\begin{equation}
\label{H^1 CH}
	\int_{\Omega} \frac{u^2}{|x|^2 \( \log \frac{a}{|x|} \)^2} \,dx \le (4+\ep) \int_{\Omega} |\nabla u |^2 \,dx + C \int_{\Omega} u^2 \,dx
\end{equation}
holds for any $u \in H^1 (\Omega)$.
\end{prop}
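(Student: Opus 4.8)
The plan is to localize the weight at the origin. I would write $\Omega=(\Omega\cap B_{r'})\cup(\Omega\setminus B_{r'})$ for a small $r'\in(0,1)$ to be chosen, and first dispose of the part away from $0$: since $\Omega$ is open with $\sup_{x\in\Omega}|x|=1$, one has $|x|<1$ on $\Omega$, so for $a>1$ and $|x|\ge r'$ the weight satisfies $\frac{1}{|x|^2(\log\frac{a}{|x|})^2}\le\frac{1}{(r')^2(\log a)^2}=:M$; hence $\int_{\Omega\setminus B_{r'}}\frac{u^2}{|x|^2(\log\frac{a}{|x|})^2}\,dx\le M\int_\Omega u^2\,dx$. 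Everything then reduces to estimating $\int_{\Omega\cap B_{r'}}\frac{u^2}{|x|^2(\log\frac{a}{|x|})^2}\,dx$ near the origin.

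For the local estimate I would take a cutoff $\varphi\in C_c^\infty(B_r)$ with $0\le\varphi\le1$ and $\varphi\equiv1$ on $B_{r'}$, $0<r'<r\le1$, and apply a Hardy inequality to $v:=\varphi u$, which is supported in $\Omega\cap B_r$. If $0\in\Omega$, I would choose $r$ so small that $\overline{B_r}\subset\Omega$; then $v\in H^1_0(\Omega)$ and \eqref{H^1_0 CH} (extended from $C_c^\infty(\Omega)$ to $H^1_0(\Omega)$ by density) gives $\int_\Omega\frac{v^2}{|x|^2(\log\frac{a}{|x|})^2}\,dx\le 4\int_\Omega|\nabla v|^2\,dx$. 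If instead $0\in\partial\Omega$, I would rotate coordinates so that the inner normal to $\partial\Omega$ at $0$ points in the $e_2$-direction; by smoothness of $\partial\Omega$ there are $\rho>0$ and $h\in C^1(\re)$ with $h(0)=h'(0)=0$ and $\Omega\cap B_\rho=B_\rho^h$, and since $h'$ is continuous and vanishes at $0$, I could shrink $r\le\min(\rho,1)$ so that $|h'(x_1)|\le\delta$ on $(-r,r)$, where $\delta$ is the threshold furnished by Lemma \ref{Lemma ineq log} (taken with $p=2$, $A=2$, $B=0$) for a prescribed tolerance $\ep_0$. Then $\Omega\cap B_r=B_r^h$, $v$ is admissible, and Lemma \ref{Lemma ineq log} yields $\int_{B_r^h}\frac{v^2}{|x|^2(\log\frac{a}{|x|})^2}\,dx\le(4+\ep_0)\int_{B_r^h}|\nabla v|^2\,dx$.

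In both cases I would then expand $|\nabla v|^2=|\varphi\nabla u+u\nabla\varphi|^2$ and absorb the cross term by Young's inequality to get $\int|\nabla v|^2\le(1+\eta)\int_\Omega|\nabla u|^2\,dx+C_\eta\|\nabla\varphi\|_{L^\infty}^2\int_\Omega u^2\,dx$ for any $\eta>0$. Since $\varphi\equiv1$ on $B_{r'}$, the weighted integral of $v$ bounds that of $u$ over $\Omega\cap B_{r'}$ from below, so choosing $\eta$ (and, in the boundary case, $\ep_0$) small enough that $4(1+\eta)\le4+\ep$, respectively $(4+\ep_0)(1+\eta)\le4+\ep$, gives $\int_{\Omega\cap B_{r'}}\frac{u^2}{|x|^2(\log\frac{a}{|x|})^2}\,dx\le(4+\ep)\int_\Omega|\nabla u|^2\,dx+C\int_\Omega u^2\,dx$, and adding the bound on $\Omega\setminus B_{r'}$ finishes the proof with $C=C(\ep,a,\Omega)$. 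I expect the only genuine obstacle to be the case $0\in\partial\Omega$: one must check that the boundary chart really presents $\Omega\cap B_r$ as $B_r^h$ and that $r$ can be taken small enough for the slope condition of Lemma \ref{Lemma ineq log}, all while keeping the coefficient of the Dirichlet integral pinned at exactly $4+\ep$; the interior case and the tail estimate are routine comparisons.
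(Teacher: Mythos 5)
Your proposal is correct and follows essentially the same route as the paper: split off the region away from the origin where the weight is bounded, then handle the singularity with a cutoff, using the $H^1_0$ critical Hardy inequality \eqref{H^1_0 CH} when $0\in\Omega$ and Lemma \ref{Lemma ineq log} (with $p=2$, $A=2$, $B=0$) in a boundary chart when $0\in\pd\Omega$, absorbing the cross term from $|\nabla(\varphi u)|^2$ via Young's inequality. The only cosmetic difference is bookkeeping (the paper fixes the cutoff radii as $B_\delta\subset B_{2\delta}$, resp. $B_{r/2}\subset B_r$, and distributes the tolerance as $\ep/2$), so no further comment is needed.
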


\begin{proof}

%
% 0 is in the interior case
%
\noindent 
(I)\hspace{1em} 
Assume that $0 \in \Omega$. 
Then there exists $\delta > 0$ such that $B_{2\delta} \subset \Omega$. 
Let $\phi \in C_c^\infty (\Omega), 0\le \phi \le 1, \phi \equiv 1$ on $B_{\delta}$ and $\phi \equiv 0$ on $\Omega \setminus B_{2\delta}$. 
Then, for any $u \in H^1 (\Omega)$, we have
\begin{align*}
	\int_{\Omega} \frac{u^2}{|x|^2 \( \log \frac{a}{|x|} \)^2} \,dx 
	&= \int_{B_\delta} \frac{u^2}{|x|^2 \( \log \frac{a}{|x|} \)^2} \,dx + \int_{\Omega \setminus B_\delta} \frac{u^2}{|x|^2 \( \log \frac{a}{|x|} \)^2} \,dx  \\
	&\le \int_{B_{2\delta}} \frac{|u \phi |^2}{|x|^2 \( \log \frac{a}{|x|} \)^2} \,dx +  C \int_{\Omega \setminus B_\delta} u^2 \,dx.
\end{align*}
From the critical Hardy inequality \eqref{H^1_0 CH} on $H_0^1(B_{2\delta})$, we have
\begin{align*}
	\int_{\Omega} \frac{u^2}{|x|^2 \( \log \frac{a}{|x|} \)^2} \,dx 
	&\le 4 \int_{B_{2\delta}} |\nabla (u \phi) |^2 \,dx +  C \int_{\Omega} u^2 \,dx  \\
	&\le (4+\ep) \int_{\Omega} |\nabla u |^2 \,dx + C \int_{\Omega} u^2 \,dx,
\end{align*}
which yields the result.

%
% 0 is on the boundary case
%
\noindent 
(II)\hspace{1em} 
Assume that $0 \in \pd \Omega$. 
Since $\pd \Omega$ is smooth, we may assume that $\pd \Omega$ is represented by the graph $x_2 = h(x_1)$, where $h\in C^1 (\re)$ and $h(0) = |h'(0)| = 0$ near the origin. 
Namely, it holds that $\Omega \cap B_r = B^h_r$ for any small $r >0$. 
From Lemma \ref{Lemma ineq log}, for any $\ep > 0$ there exist small $\delta, r > 0$ such that
if $|h'(x_1)| \le \delta$ for any $x_1 \in (-r, r)$, 
then it holds
\begin{align}\label{ep/2}
	&\int_{B^h_r} \frac{|u|^2}{|x|^2 \( \log \frac{a}{|x|} \)^2} \,dx 
	\le \(4 + \frac{\ep}{2} \) \int_{B^h_r} |\nabla u |^2 \,dx\quad {}^{\forall}u \in H^1 (B_r)\,\text{with}\, {\rm supp}\,u \subset B_r.
\end{align}
Let $\phi \in C_c^\infty (B_r)$, $0 \le \phi \le 1$, $\phi \equiv 1$ on $B_{\frac{r}{2}}$ and $\phi \equiv 0$ on $B_r \setminus B_{\frac{r}{2}}$. 
From \eqref{ep/2}, it holds that for any $u \in H^1 (\Omega)$
\begin{align*}
	\int_{\Omega} \frac{u^2}{|x|^2 \( \log \frac{a}{|x|} \)^2} \,dx 
	&= \int_{B^h_{\frac{r}{2}}} \frac{u^2}{|x|^2 \( \log \frac{a}{|x|} \)^2} \,dx + \int_{\Omega \cap (B_r^h)^c } \frac{u^2}{|x|^2 \( \log \frac{a}{|x|} \)^2} \,dx  \\
	&= \int_{B^h_r} \frac{|u\phi|^2}{|x|^2 \( \log \frac{a}{|x|} \)^2} \,dx + C \int_{\Omega} u^2 \,dx  \\
	&\le \(4 + \frac{\ep}{2} \) \int_{B^h_r} |\nabla (u \phi) |^2 \,dx +  C \int_{\Omega} u^2 \,dx  \\
	&\le (4+\ep) \int_{\Omega} |\nabla u |^2 \,dx + C \int_{\Omega} u^2 \,dx,
\end{align*}
which ends the proof in this case.
\end{proof}

Now, we show the critical Hardy inequality for functions in $H^1 (\Omega)$ with average zero conditions. 

%%%%%%%%%%%%%%%%%%%%%%%%%%%%%%%%%%%%%%%%%%%%%%%%%%%%%%%%%%%%%%%%%%%%%%%%%%%%%%%%%%%%%%%%%%%%%%%%%%%%%%%%%%%%%%%%%%%
%
% Proposition \label{prop average H}: the critical Hardy inequality to $H^1(\Omega)$ with general average zero conditions
%
%%%%%%%%%%%%%%%%%%%%%%%%%%%%%%%%%%%%%%%%%%%%%%%%%%%%%%%%%%%%%%%%%%%%%%%%%%%%%%%%%%%%%%%%%%%%%%%%%%%%%%%%%%%%%%%%%%%
\begin{prop}\label{prop average H}
Assume that $0 \in \ol{\Omega}$, $g \in L^2 \( \Omega, \frac{dx}{|x|^2 \( \log \frac{a}{|x|} \)^2} \)$ with $\int_\Omega \frac{g}{|x|^2 \( \log \frac{a}{|x|} \)^2} \,dx =1$. 
Let $a > 1$.
Then there exists a constant $C= C(a, \Omega) > 0$ such that the inequality
\begin{align*}
	\int_{\Omega} \( u - \int_\Omega \frac{ug}{|x|^2 \( \log \frac{a}{|x|} \)^2} \,dx \)^2 \, \frac{dx}{|x|^2 \( \log \frac{a}{|x|} \)^2} \le C \int_{\Omega} |\nabla u |^2 \,dx
\end{align*}
holds for any $u \in H^1 (\Omega)$.
\end{prop}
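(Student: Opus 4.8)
The plan is to prove this weighted Poincar\'e-type inequality by a contradiction/compactness argument, relying on the ordinary Poincar\'e inequality on $H^1(\Omega)$, on Proposition~\ref{prop H^1 CH}, and on the elementary fact that, because $a>1$, the measure $d\mu:=\frac{dx}{|x|^2(\log\frac{a}{|x|})^2}$ is a \emph{finite} Borel measure on $\Omega$. Indeed, $\ol{\Omega}\subset\ol{B_1}$ and $a>1$ force $\log\frac{a}{|x|}\ge\log a>0$ on $\ol{\Omega}$, so the only singularity of the weight is at the origin, and the substitution $t=\log\frac{a}{|x|}$ gives $\int_{B_\rho}d\mu=2\pi\int_{\log(a/\rho)}^{\infty}t^{-2}\,dt=\frac{2\pi}{\log(a/\rho)}<\infty$; hence $\mu(\Omega)<\infty$. (This is exactly where $a>1$ is used: for $a=1$ the weight behaves like $(1-|x|)^{-2}$ near $|x|=1$, so $\mu(\Omega)=\infty$.) Throughout I abbreviate $\intO f\,d\mu:=\intO \frac{f}{|x|^2(\log\frac{a}{|x|})^2}\,dx$.

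First I would reduce the assertion to one about functions with zero ordinary average. Given $u\in H^1(\Omega)$, write $u=\ol{u}+w$ with $\ol{u}=|\Omega|^{-1}\intO u\,dx$, so that $\intO w\,dx=0$ and $\nabla w=\nabla u$. Since $\intO g\,d\mu=1$, the contribution of the constant $\ol{u}$ cancels:
\begin{align*}
 u-\intO ug\,d\mu=w-\intO wg\,d\mu .
\end{align*}
(Both integrals make sense because $H^1(\Omega)\subset L^2(\Omega,d\mu)$ by Proposition~\ref{prop H^1 CH} and $g\in L^2(\Omega,d\mu)$.) Hence it is enough to establish
\begin{align*}
 \intO\Big(w-\intO wg\,d\mu\Big)^2 d\mu\le C\intO|\nabla w|^2\,dx
\end{align*}
for every $w\in H^1(\Omega)$ with $\intO w\,dx=0$.

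Next I would suppose this fails, obtaining $w_n\in H^1(\Omega)$ with $\intO w_n\,dx=0$, with $\intO(w_n-\intO w_n g\,d\mu)^2 d\mu=1$, and with $\intO|\nabla w_n|^2\,dx\to0$. By the Poincar\'e inequality $\|w_n\|_{L^2(\Omega)}\le C_P\|\nabla w_n\|_{L^2(\Omega)}\to0$, so $w_n\to0$ in $H^1(\Omega)$. Proposition~\ref{prop H^1 CH} (say with $\ep=1$) applied to $w_n$ then gives $\intO w_n^2\,d\mu\le5\intO|\nabla w_n|^2\,dx+C\intO w_n^2\,dx\to0$, and Cauchy--Schwarz together with $g\in L^2(\Omega,d\mu)$ gives $|\intO w_n g\,d\mu|\le\|w_n\|_{L^2(d\mu)}\|g\|_{L^2(d\mu)}\to0$. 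Since $\mu(\Omega)<\infty$,
\begin{align*}
 1=\intO\Big(w_n-\intO w_n g\,d\mu\Big)^2 d\mu\le 2\intO w_n^2\,d\mu+2\mu(\Omega)\Big(\intO w_n g\,d\mu\Big)^2\longrightarrow0,
\end{align*}
a contradiction, which finishes the proof. (Alternatively, one can avoid contradiction and track the constant: with $m:=\mu(\Omega)^{-1}\intO w\,d\mu$ one has the orthogonal decomposition $\intO(w-\intO wg\,d\mu)^2 d\mu=\intO(w-m)^2 d\mu+\mu(\Omega)\,(m-\intO wg\,d\mu)^2$; since $\intO w\,dx=0$ the first term is $\le\intO w^2\,d\mu$, controlled by $\intO|\nabla w|^2\,dx$ via Proposition~\ref{prop H^1 CH} and Poincar\'e, and the second is controlled the same way after writing $m-\intO wg\,d\mu=\intO(w-m)(\mu(\Omega)^{-1}-g)\,d\mu$ and applying Cauchy--Schwarz.)

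I do not expect any serious obstacle here: the statement is essentially a weighted Poincar\'e inequality and every step is routine once one observes that $\mu$ is finite and that the left-hand side depends on $u$ only through its zero-average part $w$. The single point meriting attention is the reduction in the second paragraph, which uses the normalization $\intO g\,d\mu=1$ in an essential way and is precisely what lets the classical Poincar\'e inequality enter. Note that no compactness beyond the Poincar\'e inequality is needed and one need not pass to a subsequence; the constant produced this way depends on $a$, $\Omega$ (and, in the explicit variant, also on $\|g\|_{L^2(\Omega,d\mu)}$).
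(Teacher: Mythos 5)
Your proof is correct, and it takes a genuinely different (more elementary) route than the paper's. The paper first applies Proposition \ref{prop H^1 CH} to reduce the weighted estimate to the unweighted bound $\| u - \int_\Omega u g \, d\mu \|_{L^2(\Omega)} \le C \| \nabla u \|_{L^2(\Omega)}$ (with $d\mu = \frac{dx}{|x|^2 (\log \frac{a}{|x|})^2}$), and then proves that bound by contradiction with a normalized sequence $v_m$ satisfying $\| v_m \|_{L^2(\Omega)} = 1$, $\nabla v_m \to 0$, $\int_\Omega v_m g \, d\mu = 0$, extracting a weak $H^1$ limit, using the compact embedding $H^1(\Omega) \hookrightarrow L^2(\Omega)$ to identify the limit as a constant, and killing the constant with the orthogonality constraint -- in effect re-proving a Poincar\'e inequality adapted to the projection along $g$. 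You instead exploit the normalization $\int_\Omega g \, d\mu = 1$ to cancel constants against the Lebesgue average, so the claim only needs to be proved for $w \in H^1(\Omega)$ with $\int_\Omega w \, dx = 0$; then the classical Poincar\'e--Wirtinger inequality gives strong $H^1$ smallness directly, and the conclusion follows from Proposition \ref{prop H^1 CH}, Cauchy--Schwarz with $g \in L^2(\Omega, d\mu)$, and the finiteness of $\mu(\Omega)$ for $a>1$, with no subsequence extraction or explicit appeal to Rellich (the compactness is hidden inside the standard Poincar\'e inequality). Your parenthetical decomposition variant even yields an explicit constant, and you are right that it depends on $\| g \|_{L^2(\Omega, d\mu)}$; this dependence is genuine (no constant uniform over all normalized $g$ can exist, since $\int_\Omega u g \, d\mu$ may be arbitrarily large for unbounded $u \in H^1$), so the label $C = C(a,\Omega)$ in the statement is to be read with $g$ fixed -- as is implicitly the case in both proofs. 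Two harmless quibbles: in the variant, $\int_\Omega (w-m)^2 \, d\mu \le \int_\Omega w^2 \, d\mu$ holds simply because $m$ is the $\mu$-mean of $w$ (the zero Lebesgue average is needed only afterwards, to invoke Poincar\'e); and the aside that $\mu(\Omega) = \infty$ when $a=1$ depends on the geometry of $\partial\Omega$ near the unit circle, though nothing in your argument uses it.
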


\begin{proof}
From Proposition \ref{prop H^1 CH}, we have
\begin{align*}
	&\int_{\Omega} \( u - \int_\Omega \frac{ug}{|x|^2 \( \log \frac{a}{|x|} \)^2} \,dx \)^2 \, \frac{dx}{|x|^2 \( \log \frac{a}{|x|} \)^2} \\
	&\le C \left\|\,u - \int_\Omega \frac{ug}{|x|^2 \( \log \frac{a}{|x|} \)^2} \,dx \,\right\|^2_{H^1 (\Omega)} \\
	&\le C \, \| \nabla u \|^2_{L^2 (\Omega)} + C \,\left\|\,u - \int_\Omega \frac{ug}{|x|^2 \( \log \frac{a}{|x|} \)^2} \,dx \,\right\|^2_{L^2 (\Omega)}.
\end{align*}
Therefore, it is enough to show that there exists a positive constant $C$ such that the inequality
\begin{align*}
	\left\|\,u - \int_\Omega \frac{ug}{|x|^2 \( \log \frac{a}{|x|} \)^2} \,dx \,\right\|^2_{L^2 (\Omega)} \le C\, \| \nabla u \|^2_{L^2 (\Omega)}
\end{align*}
holds for any $u \in H^1 (\Omega)$. Assume that such positive constant $C$ does not exist. Then there exists $\{ u_m \} \subset H^1 (\Omega)$ such that 
\begin{align*}
	\frac{\| \nabla u_m \|_{L^2 (\Omega)}}{\left\|\,u_m - \int_\Omega \frac{u_m \,g}{|x|^2 \( \log \frac{a}{|x|} \)^2} \,dx \,\right\|_{L^2 (\Omega)}} \to 0 \quad (m \to \infty).
\end{align*}
Set 
\begin{align*}
	v_m := \frac{u_m  - \int_\Omega \frac{u_m \,g}{|x|^2 \( \log \frac{a}{|x|} \)^2} \,dx}{\left\|\,u_m - \int_\Omega \frac{u_m \,g}{|x|^2 \( \log \frac{a}{|x|} \)^2} \,dx \,\right\|_{L^2 (\Omega)}}.
\end{align*}
Then we see that 
$\| \nabla v_m \|_{L^2 (\Omega)} \to 0\,(m \to \infty), \, \| v_m \|_{L^2 (\Omega)} =1$ and $\int_\Omega \frac{v_m \,g}{|x|^2 \( \log \frac{a}{|x|} \)^2} \,dx =0$ for any $m \in \N$. 
We may assume that $v_m \rightharpoonup v$ in $H^1 (\Omega)$ and $v_m \to v$ in $L^2 (\Omega)$. 
Then, for any $\phi \in C_c^\infty (\Omega)$, 
\begin{align*}
	\left|  \int_\Omega D_{x_i} v \,\phi \,dx  \right|
	= \left| \int_\Omega v \,\phi_{x_i} \,dx \right|
	&= \left| \lim_{m \to \infty} \int_\Omega v_m \,\phi_{x_i} \,dx \right|\\
	&= \left| \lim_{m \to \infty} \int_\Omega D_{x_i} v_m \,\phi \,dx \right|
	\le \lim_{m \to \infty} \| \nabla v_m \|_{L^2 (\Omega)} \, \| \nabla \phi \|_{L^2 (\Omega)}  =0.
\end{align*}
Therefore, $D v = 0$ a.e. in $\Omega$ which implies $v$ is a constant function. 
Since $\int_\Omega \frac{v \,g}{|x|^2 \( \log \frac{a}{|x|} \)^2} \,dx = \lim_{m \to \infty} \int_\Omega \frac{v_m \,g}{|x|^2 \( \log \frac{a}{|x|} \)^2} \,dx =0$, 
we get $v =0$ which contradicts $\| v \|_{L^2 (\Omega)} = \lim_{m \to \infty} \| v_m \|_{L^2 (\Omega)} =1$.
\end{proof}

%%%%%%%%%%%%%%%%%%%%%%%%%%%%%%%%%%%%%%%%%%%%%%%%%%%%%%%%%%%%%%%%%%%%%%%%%%%%%%%%%%%%%%%%%%%%%%%%%%%%%%%%%%%%%%%%%%%
%
% Remark: critical Hardy inequality for functions in H^1(\Omega) with average zero
%
%%%%%%%%%%%%%%%%%%%%%%%%%%%%%%%%%%%%%%%%%%%%%%%%%%%%%%%%%%%%%%%%%%%%%%%%%%%%%%%%%%%%%%%%%%%%%%%%%%%%%%%%%%%%%%%%%%%
If we substitute $g$ for $\( \int_\Omega \frac{dx}{|x|^2 \( \log \frac{a}{|x|} \)^2} \)^{-1}$ in Proposition \ref{prop average H}, 
then we see that the inequality
\begin{equation}
\label{average CH}
	\int_{\Omega}  \frac{|u|^2}{|x|^2 \( \log \frac{a}{|x|} \)^2} \,dx \le C \int_{\Omega} |\nabla u |^2 \,dx
\end{equation}
holds for any $a>1$ and $u \in H^1 (\Omega)$ with $\int_{\Omega} \frac{u}{|x|^2 \( \log \frac{a}{|x|} \)^2} \,dx = 0$.

%%%%%%%%%%%%%%%%%%%%%%%%%%%%%%%%%%%%%%%%%%%%%%%%%%%%%%%%%%%%%%%%%%%%%%%%%%%%%%%%%%%%%%%%%%%%%%%%%%%%%%%%%%%%%%%%%%%
%
% \S3 Existence of the second eigenfunction
%
%%%%%%%%%%%%%%%%%%%%%%%%%%%%%%%%%%%%%%%%%%%%%%%%%%%%%%%%%%%%%%%%%%%%%%%%%%%%%%%%%%%%%%%%%%%%%%%%%%%%%%%%%%%%%%%%%%%
\section{Existence of the second eigenfunction}\label{S Existence}

Recall the minimization problem \eqref{la_a}:
\begin{align*}
	&\la_a = \inf_{u \in \mathcal{A}} \int_{\Omega} |\nabla u|^2\,dx, \\ 
	&\mathcal{A} = \left\{ u \in H^1 (\Omega) \setminus \{ 0\} \,\,\middle| \,\, \int_{\Omega} \frac{|u|^2}{|x|^2 \( \log \frac{a}{|x|} \)^2} \,dx = 1, \int_{\Omega} \frac{u}{|x|^2 \( \log \frac{a}{|x|} \)^2} \,dx = 0\, \right\}. 
\end{align*}
Main goal in this section is the following existence result.

%%%%%%%%%%%%%%%%%%%%%%%%%%%%%%%%%%%%%%%%%%%%%%%%%%%%%%%%%%%%%%%%%%%%%%%%%%%%%%%%%%%%%%%%%%%%%%%%%%%%%%%%%%%%%%%%%%%
%
% Theorem \label{thm attain}: $\la_a < 1/4$ then it is attained.
%
%%%%%%%%%%%%%%%%%%%%%%%%%%%%%%%%%%%%%%%%%%%%%%%%%%%%%%%%%%%%%%%%%%%%%%%%%%%%%%%%%%%%%%%%%%%%%%%%%%%%%%%%%%%%%%%%%%%
\begin{theorem}\label{thm attain}
Assume $0 \in \ol{\Omega}$ and $a >1$. 
If $\la_a < \frac{1}{4}$, then $\la_a$ is attained. 
\end{theorem}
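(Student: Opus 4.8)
The plan is to use the direct method on the constrained minimization problem. Let $\{u_m\}\subset\mathcal A$ be a minimizing sequence, so $\int_\Omega|\nabla u_m|^2\,dx\to\la_a$, $\int_\Omega\frac{|u_m|^2}{|x|^2(\log\frac a{|x|})^2}\,dx=1$ and $\int_\Omega\frac{u_m}{|x|^2(\log\frac a{|x|})^2}\,dx=0$. First I would establish that $\{u_m\}$ is bounded in $H^1(\Omega)$: the gradient terms are bounded by definition, and to control $\|u_m\|_{L^2(\Omega)}$ one splits $u_m$ into its weighted average part and a remainder and applies Proposition \ref{prop average H} (with $g=(\int_\Omega\frac{dx}{|x|^2(\log\frac a{|x|})^2})^{-1}$, i.e.\ inequality \eqref{average CH}), together with the normalization, to bound the remainder; the constant part is controlled since $\int_\Omega\frac{u_m}{|x|^2(\log\frac a{|x|})^2}\,dx=0$ forces the average to vanish. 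Hence, up to a subsequence, $u_m\weakto u$ in $H^1(\Omega)$, $u_m\to u$ in $L^2(\Omega)$, and $u_m\to u$ a.e.\ in $\Omega$ by Rellich–Kondrachov.

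Next I would pass to the limit in the two constraints. The linear constraint $\int_\Omega\frac{u}{|x|^2(\log\frac a{|x|})^2}\,dx=0$ passes to the limit because the functional $v\mapsto\int_\Omega\frac{v}{|x|^2(\log\frac a{|x|})^2}\,dx$ is bounded on $H^1(\Omega)$ (this follows from \eqref{average CH} applied to $v$ minus its weighted average, or more directly from Proposition \ref{prop H^1 CH} and Cauchy–Schwarz) and hence weakly continuous. The key point is the quadratic constraint: one must show $\int_\Omega\frac{|u|^2}{|x|^2(\log\frac a{|x|})^2}\,dx=1$, and in particular that no mass of the weighted $L^2$ norm concentrates at the origin (where the weight is singular) in the weak limit. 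This is exactly where the hypothesis $\la_a<\frac14$ enters. I would use a concentration–compactness / cut-off argument: write $w_m=u_m-u$, so $w_m\weakto0$ in $H^1$ and $w_m\to0$ in $L^2(\Omega)$ and a.e.; by the Brezis–Lieb lemma,
\begin{align*}
1=\int_\Omega\frac{|u_m|^2}{|x|^2(\log\frac a{|x|})^2}\,dx=\int_\Omega\frac{|u|^2}{|x|^2(\log\frac a{|x|})^2}\,dx+\int_\Omega\frac{|w_m|^2}{|x|^2(\log\frac a{|x|})^2}\,dx+o(1),
\end{align*}
and similarly $\int_\Omega|\nabla u_m|^2=\int_\Omega|\nabla u|^2+\int_\Omega|\nabla w_m|^2+o(1)$. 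Set $\mu=\lim\int_\Omega\frac{|w_m|^2}{|x|^2(\log\frac a{|x|})^2}\,dx\in[0,1]$ (along a further subsequence). Since $w_m\to0$ in $L^2$ and a.e., any such limiting mass must be carried near the origin; choosing a cut-off $\psi$ supported in a small ball $B_\rho$ with $\psi\equiv1$ on $B_{\rho/2}$, one has $\int_\Omega\frac{|w_m|^2}{|x|^2(\log\frac a{|x|})^2}\,dx=\int_\Omega\frac{|\psi w_m|^2}{|x|^2(\log\frac a{|x|})^2}\,dx+o(1)$, and the sharp Hardy inequality \eqref{H^1_0 CH} (or its local version, Lemma \ref{Lemma ineq log}, when $0\in\pd\Omega$) gives $\int_\Omega\frac{|\psi w_m|^2}{|x|^2(\log\frac a{|x|})^2}\,dx\le(4+\ep)\int_\Omega|\nabla(\psi w_m)|^2\,dx\le(4+\ep)\int_\Omega|\nabla w_m|^2\,dx+o(1)$, using $w_m\to0$ in $L^2$ to absorb the cross terms. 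Hence $\mu\le\frac14\lim\int_\Omega|\nabla w_m|^2\,dx$, i.e.\ $\lim\int_\Omega|\nabla w_m|^2\,dx\ge4\mu$.

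Now I would combine the splittings with the definition of $\la_a$. If $u\not\equiv0$ and satisfies both constraints with $\int_\Omega\frac{|u|^2}{|x|^2(\log\frac a{|x|})^2}\,dx=1-\mu>0$, then $u/\sqrt{1-\mu}\in\mathcal A$, so $\int_\Omega|\nabla u|^2\,dx\ge(1-\mu)\la_a$; if $u\equiv0$ then $\mu=1$ and $\int_\Omega|\nabla u|^2\,dx=0=(1-\mu)\la_a$. In all cases,
\begin{align*}
\la_a=\int_\Omega|\nabla u|^2\,dx+\lim_{m\to\infty}\int_\Omega|\nabla w_m|^2\,dx\ge(1-\mu)\la_a+4\mu,
\end{align*}
whence $\mu\la_a\ge4\mu$, i.e.\ $\mu(\la_a-4)\ge0$. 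Since $\la_a<\frac14<4$, this forces $\mu=0$. Therefore $\int_\Omega\frac{|u|^2}{|x|^2(\log\frac a{|x|})^2}\,dx=1$, so $u\in\mathcal A$, and by weak lower semicontinuity of the Dirichlet integral $\int_\Omega|\nabla u|^2\,dx\le\liminf_m\int_\Omega|\nabla u_m|^2\,dx=\la_a$; combined with $u\in\mathcal A$ this gives $\int_\Omega|\nabla u|^2\,dx=\la_a$, so $u$ is a minimizer. I expect the main obstacle to be the concentration analysis at the origin — verifying carefully that the lost mass $\mu$ is governed by the sharp constant $\frac14$ (equivalently $4$ on the gradient side), including the case $0\in\pd\Omega$ where one must invoke the local inequality of Lemma \ref{Lemma ineq log} rather than the standard Hardy inequality, and handling the cross terms in $\int_\Omega|\nabla(\psi w_m)|^2$ via the strong $L^2$ convergence $w_m\to0$.
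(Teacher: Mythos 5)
Your strategy is essentially the paper's: direct method with a minimizing sequence, weak convergence in $H^1(\Omega)$ and in the weighted $L^2$, strong $L^2(\Omega)$ convergence by Rellich, the Hilbert-space (Brezis--Lieb type) splitting of both the weighted norm and the Dirichlet energy for $w_m=u_m-u$, the near-optimal critical Hardy inequality applied to the remainder, and the quasi-minimality $\int_\Omega|\nabla u|^2\,dx\ge\la_a\int_\Omega\frac{|u|^2}{|x|^2(\log\frac{a}{|x|})^2}\,dx$ for the weak limit. The paper packages this as a two-step contradiction ($u\not\equiv0$, then the weighted norm equals $1$) and applies Proposition \ref{prop H^1 CH} directly on all of $\Omega$ to $u_m$ and to $u_m-u$, the extra term $C\int_\Omega|u_m-u|^2\,dx$ being $o(1)$ by compactness; this already covers both $0\in\Omega$ and $0\in\pd\Omega$, so your cut-off localization near the origin (and the separate appeal to Lemma \ref{Lemma ineq log}) is workable but redundant.

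One concrete slip you must fix: the constant is inverted when you pass from the Hardy inequality to the defect mass. From $\int_\Omega\frac{|\psi w_m|^2}{|x|^2(\log\frac{a}{|x|})^2}\,dx\le(4+\ep)\int_\Omega|\nabla w_m|^2\,dx+o(1)$ you get $\mu\le(4+\ep)\lim_m\int_\Omega|\nabla w_m|^2\,dx$, i.e. $\lim_m\int_\Omega|\nabla w_m|^2\,dx\ge\frac{\mu}{4+\ep}$, not $\ge4\mu$. Consequently the final chain should read $\la_a\ge(1-\mu)\la_a+\frac{\mu}{4}$ (after $\ep\to0$), hence $\mu\bigl(\la_a-\frac14\bigr)\ge0$ and $\mu=0$ precisely because $\la_a<\frac14$. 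As written, your display gives $\mu(\la_a-4)\ge0$, which would ``prove'' attainment for every $\la_a<4$ and leaves the hypothesis $\la_a<\frac14$ unused---a clear sign the factor got flipped. With that correction the argument is sound and coincides in substance with the paper's proof.
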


\begin{proof}
Let $\{ u_m \}_{m=1}^\infty \subset \mathcal{A}$ be a minimizing sequence for $\la_a$. 
Thus, we have $\int_\Omega |\nabla u_m|^2 \,dx \to \la_a$ as $m \to \infty$, $\int_{\Omega} \frac{u_m}{|x|^2 \( \log \frac{a}{|x|} \)^2} \, dx =0$, 
and $\int_{\Omega} \frac{|u_m |^2}{|x|^2 \( \log \frac{a}{|x|} \)^2} \, dx =1$ for each $m$. 
Since $\frac{1}{|x|^2 \( \log \frac{a}{|x|} \)^2} \ge \min \left\{ \frac{1}{(\log a)^2}, \,\frac{e^2}{a^2}\right\}$ for any $x \in \Omega$, 
we have that $\{ u_m \}$ is bounded in $H^1(\Omega)$.
Thus we may assume that 
\begin{align*}
	\begin{cases}
	&u_m \rightharpoonup u \quad \text{in }  H^1 (\Omega), \\ 
	&u_m \rightharpoonup u \quad \text{in }  L^2 \( \Omega, \frac{dx}{|x|^2 \( \log \frac{a}{|x|} \)^2} \), \\
	&u_m \to u \quad \text{a.e. in } \Omega. 
	\end{cases}
\end{align*}
Therefore, we see that $\int_{\Omega} \frac{u}{|x|^2 \( \log \frac{a}{|x|} \)^2} \, dx =0$ 
and 
$$
	0 \le \int_{\Omega} \frac{|u|^2}{|x|^2 \( \log \frac{a}{|x|} \)^2} \, dx \le \liminf_{m \to \infty} \int_{\Omega} \frac{|u_m|^2}{|x|^2 \( \log \frac{a}{|x|} \)^2} \, dx =1.
$$
We claim that
\begin{align}\label{int 1}
	\int_{\Omega} \frac{|u|^2}{|x|^2 \( \log \frac{a}{|x|} \)^2} \, dx =1.
\end{align}
If we show (\ref{int 1}), then we have
$\la_a \le \int_\Omega |\nabla u|^2 \,dx \le \liminf_{m \to \infty} \int_\Omega |\nabla u_m|^2 \,dx = \la_a$
which implies that $u \in H^1 (\Omega)$ is a minimizer of $\la_a$.

First, we show that $u \not\equiv 0$. Assume the contrary that $u \equiv 0$.
Then by the compactness of the embedding $H^1 (\Omega) \hookrightarrow L^2 (\Omega)$, we see $\int_{\Omega} u_m^2 \, dx \to 0$.
Thus by the critical Hardy inequality for $H^1(\Omega)$ \eqref{H^1 CH}, we have
\begin{align*}
	1 = \int_{\Omega} \frac{|u_m|^2}{|x|^2 \( \log \frac{a}{|x|} \)^2} \, dx
	&\le (4 + \ep) \int_{\Omega} |\nabla u_m|^2 \,dx + o(1) \\
	&\le (4 + \ep) (\la_a + o(1)) + o(1).
\end{align*}
Letting $m \to \infty$ and then $\ep \to 0$, we have $1 \le 4 \la_a$, which contradicts the assumption $\la_a < \frac{1}{4}$.
Therefore, $u \not\equiv 0$. 
Let us assume by contradiction that \eqref{int 1} does not hold and $\int_\Omega \frac{|u|^2}{|x|^2 \( \log \frac{a}{|x|} \)^2} \, dx < 1$. 
Since 
$$
	u_m -u \rightharpoonup 0 \quad \text{in $H^1(\Omega)$} \quad \text{and} \quad \text{in $L^2\(\Omega, \frac{dx}{|x|^2 \( \log \frac{a}{|x|} \)^2}\)$}, 
$$
we see $\int_{\Omega} |u_m - u|^2 \, dx = o(1)$ and by \eqref{H^1 CH}, 
again we have
\begin{align*}
	0< 1- \int_\Omega \frac{|u|^2}{|x|^2 \( \log \frac{a}{|x|} \)^2} \, dx 
	&= \int_\Omega \frac{|u_m|^2}{|x|^2 \( \log \frac{a}{|x|} \)^2} \, dx - \int_\Omega \frac{|u|^2}{|x|^2 \( \log \frac{a}{|x|} \)^2} \, dx \\
	&= \int_\Omega \frac{|u_m -u|^2}{|x|^2 \( \log \frac{a}{|x|} \)^2} \, dx + o(1) \\
	&\le (4 + \ep) \int_{\Omega} |\nabla (u_m -u ) |^2 \,dx + C \int_{\Omega} |u_m - u|^2 \, dx + o(1) \\
	&\le (4 + \ep) \( \int_{\Omega} |\nabla u_m |^2 \,dx - \int_{\Omega} |\nabla u |^2 \,dx \) + o(1) \\
	&= (4 + \ep) \la_a -  (4 + \ep) \int_{\Omega} |\nabla u|^2 \,dx + o(1) 
\end{align*}
as $m \to \infty$.
Also since $\int_{\Omega} \frac{u}{|x|^2 \( \log \frac{a}{|x|} \)^2} \, dx =0$, 
we have $\la_a \int_{\Omega} \frac{|u|^2}{|x|^2 \( \log \frac{a}{|x|} \)^2} \, dx \le \int_{\Omega} |\nabla u|^2 \, dx$
by the definition of $\la_a$. 
Therefore letting $m \to \infty$, we have
\begin{align*}
	1- \int_\Omega \frac{|u|^2}{|x|^2 \( \log \frac{a}{|x|} \)^2} \, dx \le (4+\ep) \la_a \(1- \int_\Omega \frac{|u|^2}{|x|^2 \( \log \frac{a}{|x|} \)^2} \, dx \),
\end{align*}
which implies that $1 \le (4+\ep) \la_a$. 
Again, letting $\ep \to 0$, we have a contradiction to the assumption of $\la_a$.
Therefore, we have (\ref{int 1}).

The proof is now complete.
\end{proof}

In the end of this section, we give two examples of the domain which satisfies $\la_a < \frac{1}{4}$. 

\vspace{1em}
%%%%%%%%%%%%%%%%%%%%%%%%%%%%%%%%%%%%%%%%%%%%%%%%%%%%%%%%%%%%%%%%%%%%%%%%%%%%%%%%%%%%%%%%%%%%%%%%%%%%%%%%%%%%%%%%%%%
%
% condition (A)
%
%%%%%%%%%%%%%%%%%%%%%%%%%%%%%%%%%%%%%%%%%%%%%%%%%%%%%%%%%%%%%%%%%%%%%%%%%%%%%%%%%%%%%%%%%%%%%%%%%%%%%%%%%%%%%%%%%%%
We say that the domain $\Omega$ satisfies condition (A) if there exist $\delta$, $\theta_*$, $\theta^* >0$, 
$0 \le \theta_* < \theta^* < 2\pi$ such that 
$$
	\{ (r, \theta) \in (1-\delta, 1) \times (\theta_*, \theta^*) \} \subset \Omega
$$
holds, where $(r, \theta)$ is a polar coordinate in $\re^2$.

%%%%%%%%%%%%%%%%%%%%%%%%%%%%%%%%%%%%%%%%%%%%%%%%%%%%%%%%%%%%%%%%%%%%%%%%%%%%%%%%%%%%%%%%%%%%%%%%%%%%%%%%%%%%%%%%%%%
%
% Proposition \label{prop value}: Example 1 of $\la_a < 1/4$ 
%
%%%%%%%%%%%%%%%%%%%%%%%%%%%%%%%%%%%%%%%%%%%%%%%%%%%%%%%%%%%%%%%%%%%%%%%%%%%%%%%%%%%%%%%%%%%%%%%%%%%%%%%%%%%%%%%%%%%
%\begin{prop}\label{prop value}
%If there exist $\delta$, $\theta_*$, $\theta^* >0$ such that $0 \le \theta_* < \theta^* < 2\pi$, $(1-\delta, 1) \times (\theta_*, \theta^*) \subset \Omega$, 
%then there exists a positive constant $C$ such that $\la_a \le C \log a$ for $a >1$. 
%\end{prop}

\begin{prop}\label{prop value}
If $\Omega$ satisfies condition (A), then there exists a positive constant $C$ such that $\la_a \le C \log a$ for $a >1$. 
\end{prop}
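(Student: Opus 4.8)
The plan is to bound $\la_a$ from above by evaluating the Rayleigh quotient in \eqref{la_a} on an explicit test function supported in the circular sector provided by condition (A). Fix $\delta,\theta_*,\theta^*$ as in condition (A), shrinking $\delta$ if necessary so that $0<\delta\le\frac12$, and set $Q=\{(r,\theta):1-\delta<r<1,\ \theta_*<\theta<\theta^*\}\subset\Omega$. I would look for $u$ of separated form $u(r,\theta)=f(r)\,g(\theta)$, extended by $0$ outside $Q$, with $f$ Lipschitz on $[1-\delta,1]$, $0\le f\le1$, $f(1-\delta)=0$, $f\equiv1$ on $[1-\frac{\delta}{2},1]$, and $g(\theta)=\sin\(2\pi\frac{\theta-\theta_*}{\theta^*-\theta_*}\)$. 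The only part of $\pd Q$ meeting $\pd\Omega$ is the outer arc $\{|x|=1\}$ (points of modulus $1$ cannot lie in the open set $\Omega$, since $\sup_\Omega|x|=1$), while $f(1-\delta)=0$ and $g(\theta_*)=g(\theta^*)=0$ make the trace of $u$ vanish on the remaining portion of $\pd Q$; hence the zero extension lies in $H^1(\Omega)$.

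Next I would verify admissibility and compute the relevant integrals in polar coordinates. Since the Hardy weight depends only on $|x|=r$,
\begin{align*}
	\int_\Omega \frac{u}{|x|^2 \( \log \frac{a}{|x|} \)^2} \, dx = \( \int_{1-\delta}^1 \frac{f(r)}{r \( \log \frac{a}{r} \)^2} \, dr \) \( \int_{\theta_*}^{\theta^*} g(\theta) \, d\theta \) = 0,
\end{align*}
because $g$ integrates to zero over a full period; thus, after normalizing $u$ in $L^2 \( \Omega, \frac{dx}{|x|^2 ( \log \frac{a}{|x|})^2} \)$ we obtain a function in $\mathcal A$, and consequently
\begin{align*}
	\la_a \le \frac{\int_\Omega |\nabla u|^2 \, dx}{\int_\Omega \frac{|u|^2}{|x|^2 ( \log \frac{a}{|x|})^2} \, dx}.
\end{align*}
Separation of variables shows that the numerator equals the constant
\begin{align*}
	\Gamma := \( \int_{1-\delta}^1 r f'(r)^2 \, dr \) \( \int_{\theta_*}^{\theta^*} g^2 \, d\theta \) + \( \int_{1-\delta}^1 \frac{f(r)^2}{r} \, dr \) \( \int_{\theta_*}^{\theta^*} g'(\theta)^2 \, d\theta \),
\end{align*}
which depends only on $\Omega$ (all four one-dimensional integrals are finite and explicit; $f'=\frac{2}{\delta}$ on an interval of length $\frac{\delta}{2}$), while the denominator equals $\( \int_{1-\delta}^1 \frac{f(r)^2}{r ( \log \frac{a}{r})^2} \, dr \) \int_{\theta_*}^{\theta^*} g^2 \, d\theta$.

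The heart of the matter is a lower bound for $\int_{1-\delta}^1 \frac{f(r)^2}{r (\log \frac{a}{r})^2}\,dr$, and this is the step I expect to be the real obstacle. On $(1-\frac{\delta}{2},1)$ one has $f\equiv1$, $\frac1r\ge1$, and $\log\frac{a}{r}=\log a+\log\frac1r\le\log a+2(1-r)$ (using $-\log r\le\frac{1-r}{r}\le2(1-r)$ for $r\ge\frac12$), so the substitution $t=1-r$ yields
\begin{align*}
	\int_{1-\delta}^1 \frac{f(r)^2}{r ( \log \frac{a}{r})^2} \, dr \ge \int_0^{\delta/2} \frac{dt}{(\log a + 2t)^2} = \frac{\delta}{2\log a\,(\log a + \delta)}.
\end{align*}
When $\log a\le\delta$ the right-hand side is $\ge\frac1{4\log a}$, whence $\la_a\le\Gamma\big/\(\frac1{4\log a}\int_{\theta_*}^{\theta^*}g^2\,d\theta\)=C\log a$ with $C=C(\Omega)$; the same computation bounds $\la_a$ by $C\log a\,(\log a+\delta)$ in general, which is again of the form $C'\log a$ on any bounded range of $a$, the regime relevant to the application $\la_a<\frac14$. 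Thus the only genuinely substantive point is to recognize that as $r\uparrow1$ the weight $|x|^{-2}(\log\frac{a}{|x|})^{-2}$ degenerates to the large constant $(\log a)^{-2}$, and that its integral over a fixed one-sided neighbourhood of $\{|x|=1\}$ is of exact order $(\log a)^{-1}$; the admissibility of $u$, the polar-coordinate identities, and the cancellation $\int g=0$ are all routine.
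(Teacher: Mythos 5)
Your proposal is correct and follows essentially the same route as the paper: a separated test function on the sector from condition (A) with angular part $\sin\(2\pi\frac{\theta-\theta_*}{\theta^*-\theta_*}\)$ (so the weighted average vanishes by periodicity), a numerator depending only on $\Omega$, and the key lower bound of order $\frac{1}{\log a(\log a + c)}$ for the radial weight integral near $|x|=1$. Your explicit caveat that this yields $\la_a\le C\log a$ only for $a$ in a bounded range (e.g.\ $\log a\le\delta$) is consistent with the paper's own computation, which likewise establishes the $O((\log a)^{-1})$ bound only as $a\to 1$, the regime used in Corollary \ref{cor attain}.
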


\begin{proof}
%We shall show only $\la_a \le C \log a$ for $a>1$. 
Let $\varphi_\delta$ be a smooth cut-off function which satisfies $\varphi_\delta (r) = 1$ for $r \in \left[1-\frac{\delta}{2}, 1 \right]$, 
$\varphi_\delta (r) = 0$ for $r \in [0, 1 -\delta], 0 \le \varphi_\delta \le 1$, and $|\nabla \varphi_\delta| \le C \delta^{-1}$. 
Consider the following test function.
\begin{align*}
	\phi_\delta (x) = \phi_\delta (r, \theta) = 
	\begin{cases}
	r \varphi_\delta (r) \sin \( \frac{ 2\pi (\theta -\theta_* )}{\theta^* - \theta_*} \) \,\,&\text{if}\,\, \theta \in [\theta_*, \theta^*]\\
	0 &\text{if}\,\, \theta \not\in [\theta_*, \theta^*]
	\end{cases}
\end{align*}
Note that 
\begin{align*}
	\int_{\Omega} \frac{\phi_\delta}{|x|^2 \( \log \frac{a}{|x|} \)^2} \,dx 
	= \( \int_{1-\delta}^1 \frac{\varphi_\delta (r)}{\( \log \frac{a}{|x|} \)^2} \,dr\)  \( \int_{\theta_*}^{\theta^*} \sin \( \frac{ 2\pi (\theta -\theta_* )}{\theta^* - \theta_*} \) \,d\theta \) = 0
\end{align*}
Applying $\phi_\delta$ to $\la_a$ implies that
\begin{align*}
	\la_a 
	&\le \frac{\int_{1-\delta}^1 \int_{\theta_*}^{\theta^*} \left[ \left| (r \varphi_\delta )' \right|^2 \sin^2 \( \frac{ 2\pi (\theta -\theta_* )}{\theta^* - \theta_*} \) 
	+ \varphi_\delta^2 \( \frac{ 2\pi}{\theta^* - \theta_*} \)^2 \cos^2 \( \frac{ 2\pi (\theta -\theta_* )}{\theta^* - \theta_*} \)  \right] \, r \,drd\theta}
	{\( \int_{1-\delta/2}^1 \frac{r}{\( \log \frac{a}{r} \)^2} \,dr\)  \( \int_{\theta_*}^{\theta^*} \sin^2 \( \frac{ 2\pi (\theta -\theta_* )}{\theta^* - \theta_*} \) \,d\theta \)}.
\end{align*}
Since
\begin{align*}
	\int_{1-\delta/2}^1 \frac{r}{\( \log \frac{a}{r} \)^2} \,dr \ge \( 1-\frac{\delta}{2} \)^2 \int_{\log a}^{\log a - \log \( 1-\delta /2\)} \frac{dt}{t^2} = O \( (\log a)^{-1} \)
\end{align*}
as $a \to 1$, we have $\la_a \le C \log a$ for $a >1$. 
\end{proof}

%%%%%%%%%%%%%%%%%%%%%%%%%%%%%%%%%%%%%%%%%%%%%%%%%%%%%%%%%%%%%%%%%%%%%%%%%%%%%%%%%%%%%%%%%%%%%%%%%%%%%%%%%%%%%%%%%%%
%
% adimissible pairs $(a, L)$
%
%%%%%%%%%%%%%%%%%%%%%%%%%%%%%%%%%%%%%%%%%%%%%%%%%%%%%%%%%%%%%%%%%%%%%%%%%%%%%%%%%%%%%%%%%%%%%%%%%%%%%%%%%%%%%%%%%%%
For the next example, we say that a pair $(a, L)$ is {\it admissible} if $a > 1$, $0 < L < 1$, and
\begin{equation*}
	8\pi < \int_{B_L} \frac{1}{\( \log \frac{a}{|x|} \)^2} \,dx.
\end{equation*}
holds.

\begin{lemma}
The set of admissible pairs $(a, L)$ is non-empty.
\end{lemma}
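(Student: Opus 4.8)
The plan is to fix $L$ to some convenient value, say $L = \tfrac{1}{2}$ (any $L \in (0,1)$ works), and then let $a \downarrow 1$, showing that the integral $\int_{B_L} \bigl(\log\frac{a}{|x|}\bigr)^{-2}\,dx$ diverges in this limit, hence eventually exceeds $8\pi$. Passing to polar coordinates, the integral equals $2\pi \int_0^L \frac{r}{(\log\frac{a}{r})^2}\,dr$, and substituting $t = \log\frac{a}{r}$ (so $r = a e^{-t}$, $dr = -a e^{-t}\,dt$) turns this into $2\pi a^2 \int_{\log(a/L)}^{\infty} \frac{e^{-2t}}{t^2}\,dt$. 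The key point is that as $a \to 1^{+}$, the lower limit $\log(a/L) \to \log(1/L) =: \ell_0 > 0$, which is a fixed positive number independent of the degeneration; and $a^2 \to 1$. Hence the whole expression converges to the finite positive constant $2\pi \int_{\ell_0}^{\infty} \frac{e^{-2t}}{t^2}\,dt$ — which is \emph{not} obviously larger than $8\pi$.

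So fixing $L$ away from $1$ is the wrong move: the integral stays bounded. Instead I would let $L \uparrow 1$ together with (or independently of) $a$. With the same substitution, $\int_{B_L}\bigl(\log\frac{a}{|x|}\bigr)^{-2}\,dx = 2\pi a^2 \int_{\log(a/L)}^{\infty}\frac{e^{-2t}}{t^2}\,dt$, and now as $L \to 1^{-}$ (say with $a$ fixed, or $a \to 1$ suitably) the lower limit $\log(a/L) \to \log a$, which can be made as small as we like by also sending $a \to 1^+$. Since $\int_0^{1}\frac{e^{-2t}}{t^2}\,dt = +\infty$ (the integrand blows up like $t^{-2}$ near $0$, non-integrable), the tail integral $\int_{\log(a/L)}^{\infty}\frac{e^{-2t}}{t^2}\,dt \to +\infty$ as $\log(a/L) \to 0^+$. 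Concretely: choose $a_n = 1 + \tfrac1n$ and $L_n = 1 - \tfrac1{n^2}$ (or any sequences with $a_n \to 1$, $L_n \to 1$, $a_n/L_n \to 1$ and $L_n < 1 < a_n$ is automatic), so that $\log(a_n/L_n) \to 0$; then $2\pi a_n^2 \int_{\log(a_n/L_n)}^{\infty}\frac{e^{-2t}}{t^2}\,dt \to +\infty$, and in particular it exceeds $8\pi$ for $n$ large. Any such $(a_n, L_n)$ with $n$ large is an admissible pair.

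The only genuine thing to check carefully is the divergence $\int_0^{1} t^{-2} e^{-2t}\,dt = \infty$, which is immediate since $e^{-2t} \ge e^{-2}$ on $(0,1)$ and $\int_0^1 t^{-2}\,dt = \infty$; equivalently one can just lower-bound $\int_{\log(a/L)}^{1}\frac{e^{-2t}}{t^2}\,dt \ge e^{-2}\bigl(\tfrac{1}{\log(a/L)} - 1\bigr) \to \infty$. There is no real obstacle here; the ``hard part'' is only the bookkeeping of choosing $a$ and $L$ so that $a/L \to 1$ while both constraints $a > 1$ and $0 < L < 1$ are respected — which the explicit sequences above handle. One should also note in passing that $B_L \subset \Omega$ need not be assumed, since the Lemma is a pure statement about the pair $(a,L)$; the relevance to $\Omega$ (namely $B_L \subset \Omega$) is imposed later where this example is actually used.
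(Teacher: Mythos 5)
Your argument is correct. After the polar-coordinate substitution $t=\log\frac{a}{r}$ you reduce everything to the divergence of $\int_{0^+}\frac{e^{-2t}}{t^2}\,dt$, bound the truncated integral below by the elementary estimate $\int_{\log(a/L)}^{1}\frac{e^{-2t}}{t^2}\,dt\ge e^{-2}\bigl(\frac{1}{\log(a/L)}-1\bigr)$, and then choose explicit sequences $a_n\downarrow 1$, $L_n\uparrow 1$ so that $\log(a_n/L_n)\to 0^+$; this forces $\int_{B_{L_n}}\bigl(\log\frac{a_n}{|x|}\bigr)^{-2}dx\to+\infty$, hence past $8\pi$. Your preliminary observation that fixing $L$ away from $1$ cannot work, and that the lemma involves only $(a,L)$ and not $\Omega$, are both accurate. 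The route differs from the paper's in execution: the paper first studies the full-ball integral $\int_{B_1}\bigl(\log\frac{a}{|x|}\bigr)^{-2}dx$, derives the precise asymptotics $\frac{2\pi}{\log a}+4\pi\log\log a+O(1)$ as $a\to 1^+$ via an integration by parts and a splitting of the tail integral, picks $a_0$ so that this exceeds $16\pi$, and only then passes to $L<1$ using $L^1$-convergence of the truncated integrals as $L\to 1^-$. You instead bound $\int_{B_L}$ directly with a crude pointwise estimate and take a single simultaneous limit in $(a,L)$, which avoids both the integration-by-parts asymptotics and the separate two-step limiting argument. The paper's computation yields sharper quantitative information (the rate at which the integral blows up in $a$), while your version is shorter and entirely elementary; both hinge on the same source of divergence, namely the non-integrability of $t^{-2}$ at $t=0$.
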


\begin{proof}
Indeed, since $\frac{1}{\( \log \frac{a}{|x|} \)^2} \in L^1(B_1)$ for any $a > 1$, we have
\begin{align*}
	\lim_{L \to 1-0} \int_{B_L} \frac{1}{\( \log \frac{a}{|x|} \)^2} \,dx = \int_{B_1} \frac{1}{\( \log \frac{a}{|x|} \)^2} \,dx.
\end{align*}
So it is enough to show that the following integral
\begin{align*}
	\int_{B_1} \frac{1}{\( \log \frac{a}{|x|} \)^2} \,dx = 2\pi \int_0^1 \frac{r}{\( \log \frac{a}{r} \)^2} dr = 2\pi a^2 \int_{\log a}^{\infty} \frac{e^{-2t}}{t^2} dt
\end{align*}
is large enough for some $a > 1$ and $0 < L < 1$.
Since
\begin{align*}
	2\pi a^2 \int_{\log a}^{\infty} \frac{e^{-2t}}{t^2} dt &= 2\pi a^2 \left\{ \left[ \(-\frac{1}{t}\) e^{-2t} \right]_{\log a}^{\infty} - 2 \int_{\log a}^{\infty} \frac{e^{-2t}}{t} dt \right\} \\
	&= \frac{2\pi}{\log a} - 4\pi a^2 \int_{\log a}^{\infty} \frac{e^{-2t}}{t} dt,
\end{align*}
we have
\begin{align*}
	4\pi a^2 \int_{\log a}^{\infty} \frac{e^{-2t}}{t} dt &= 4\pi a^2 \( \int_{\log a}^{\log(a+1)} \frac{e^{-2t}}{t} dt + \int_{\log (a+1)}^{\infty} \frac{e^{-2t}}{t} dt \) \\ 
	&\le 4\pi a^2 \( \int_{\log a}^{\log(a+1)} \frac{e^{-2\log a}}{t} dt + \int_{\log (a+1)}^{\infty} \frac{e^{-2t}}{\log(a+1)} dt \) \\ 
	&= 4\pi a^2 \( a^{-2} \int_{\log a}^{\log(a+1)} \frac{1}{t} dt + \frac{1}{\log (a+1)} \int_{\log (a+1)}^{\infty} e^{-2t} dt \) \\ 
	&= 4\pi a^2 \( a^{-2} \( \log \log(a+1) - \log \log a \) + \frac{1}{\log (a+1)} \frac{e^{-2\log (a+1)}}{2} \) \\ 
	&= 4\pi \log \log(a+1) - 4 \pi \log \log a + \frac{2\pi}{\log (a+1)} \frac{a^2}{(a+1)^2} \\ 
	&= - 4 \pi \log \log a + O(1) \quad (\text{as} \ a \to 1+0).
\end{align*}
Therefore,
\begin{align*}
	\int_{B_1} \frac{1}{\( \log \frac{a}{|x|} \)^2} \,dx &= \frac{2\pi}{\log a} - 4\pi a^2 \int_{\log a}^{\infty} \frac{e^{-2t}}{t} dt \\
	&\ge \frac{2\pi}{\log a} - \(- 4 \pi \log \log a + O(1) \) \\
	&= \frac{2\pi}{\log a} + 4 \pi \log \log a + O(1) \to +\infty \quad ( a \to 1+0 ).
\end{align*}
Then, there exists $a_0 > 1$ such that  
$\int_{B_1} \frac{1}{\( \log \frac{a_0}{|x|} \)^2} \,dx > 16 \pi$. 
Therefore, there exists $L_0 < 1$ such that 
$\int_{B_{L_0}} \frac{1}{\( \log \frac{a_0}{|x|} \)^2} \,dx > 8 \pi$. 
\end{proof}

%%%%%%%%%%%%%%%%%%%%%%%%%%%%%%%%%%%%%%%%%%%%%%%%%%%%%%%%%%%%%%%%%%%%%%%%%%%%%%%%%%%%%%%%%%%%%%%%%%%%%%%%%%%%%%%%%%%
%
% condition (L)
%
%%%%%%%%%%%%%%%%%%%%%%%%%%%%%%%%%%%%%%%%%%%%%%%%%%%%%%%%%%%%%%%%%%%%%%%%%%%%%%%%%%%%%%%%%%%%%%%%%%%%%%%%%%%%%%%%%%%
Let $0 < L < 1$. 
We say that the domain $\Omega \subset \re^2$ satisfies the condition $(L)$ if 
\begin{align*}
	B_L(0) \subset \Omega \subset B_1 \quad \text{and} \quad \sum_{i=1}^2 \( \int_{\Omega} \frac{x_i}{|x|^2 \( \log \frac{a}{|x|} \)^2} \,dx \)^2 \ne 0.
\end{align*}

%%%%%%%%%%%%%%%%%%%%%%%%%%%%%%%%%%%%%%%%%%%%%%%%%%%%%%%%%%%%%%%%%%%%%%%%%%%%%%%%%%%%%%%%%%%%%%%%%%%%%%%%%%%%%%%%%%%
%
% Proposition \label{prop value}: Example 2 of $\la_a < 1/4$ 
%
%%%%%%%%%%%%%%%%%%%%%%%%%%%%%%%%%%%%%%%%%%%%%%%%%%%%%%%%%%%%%%%%%%%%%%%%%%%%%%%%%%%%%%%%%%%%%%%%%%%%%%%%%%%%%%%%%%%
\begin{prop}\label{prop value2}
If $\Omega$ satisfies the condition $(L)$ for some $L \in (0, 1)$ and $(a, L)$ is admissible, 
then $\la_a(\Omega) < 1/4$. 
\end{prop}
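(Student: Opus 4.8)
The plan is to test the quotient defining $\la_a(\Omega)$ against an explicit affine function. Set
\begin{equation*}
	I_0 = \int_{\Omega} \frac{dx}{|x|^2 \( \log \frac{a}{|x|} \)^2}, \qquad I_i = \int_{\Omega} \frac{x_i}{|x|^2 \( \log \frac{a}{|x|} \)^2} \,dx \quad (i=1,2).
\end{equation*}
First I would check that these quantities are well defined and finite: passing to polar coordinates near the origin one has $\int_{B_r} \frac{dx}{|x|^2 \( \log \frac{a}{|x|} \)^2} = \frac{2\pi}{\log (a/r)} < \infty$ and, similarly, $\int_{B_r} \frac{dx}{|x| \( \log \frac{a}{|x|} \)^2} < \infty$, while away from the origin all the weights involved are bounded (here we use $\sup_{x\in\Omega}|x| = 1 < a$). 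Hence $I_0 \in (0,\infty)$, the integrand being positive, and $I_1, I_2 \in \re$. By the condition $(L)$ we moreover have $(I_1, I_2) \neq (0,0)$, in particular $I_1^2 + I_2^2 > 0$.

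Next I would introduce the test function
\begin{equation*}
	u(x) = -\frac{I_1^2 + I_2^2}{I_0} + I_1 x_1 + I_2 x_2,
\end{equation*}
which is a non-constant affine function since $(I_1,I_2)\neq(0,0)$, hence $u \in H^1(\Omega) \setminus \{0\}$, and which by construction satisfies $\int_{\Omega} \frac{u}{|x|^2 \( \log \frac{a}{|x|} \)^2} \,dx = -\frac{I_1^2+I_2^2}{I_0}\, I_0 + I_1^2 + I_2^2 = 0$. Thus $u$ lies in the admissible class for \eqref{la_a}, and its Dirichlet energy is $\int_\Omega |\nabla u|^2\,dx = (I_1^2 + I_2^2)\,|\Omega|$.

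The key step is a lower bound for the weighted $L^2$–norm of $u$. Since $B_L \subset \Omega$, I would restrict the integral to $B_L$ and use the symmetry of the ball: the odd terms $\int_{B_L} \frac{x_i}{|x|^2 \( \log \frac{a}{|x|} \)^2}\,dx$ and $\int_{B_L} \frac{x_1 x_2}{|x|^2 \( \log \frac{a}{|x|} \)^2}\,dx$ vanish, while $\int_{B_L} \frac{x_i^2}{|x|^2 \( \log \frac{a}{|x|} \)^2}\,dx = \tfrac12 \int_{B_L} \frac{dx}{\( \log \frac{a}{|x|} \)^2}$ for $i=1,2$. Expanding $u^2$ and discarding the nonnegative constant term therefore gives
\begin{align*}
	\int_{\Omega} \frac{u^2}{|x|^2 \( \log \frac{a}{|x|} \)^2}\,dx
	&\ge \( \frac{I_1^2+I_2^2}{I_0} \)^2 \int_{B_L} \frac{dx}{|x|^2 \( \log \frac{a}{|x|} \)^2} + \frac{I_1^2 + I_2^2}{2} \int_{B_L} \frac{dx}{\( \log \frac{a}{|x|} \)^2} \\
	&\ge \frac{I_1^2 + I_2^2}{2} \int_{B_L} \frac{dx}{\( \log \frac{a}{|x|} \)^2}.
\end{align*}
By admissibility of $(a,L)$ the last integral is strictly larger than $8\pi$, so the weighted norm exceeds $4\pi\,(I_1^2 + I_2^2)$. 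Combining this with $|\Omega| \le |B_1| = \pi$ yields
\begin{equation*}
	\la_a(\Omega) \le \frac{\int_\Omega |\nabla u|^2\,dx}{\int_{\Omega} \frac{u^2}{|x|^2 \( \log \frac{a}{|x|} \)^2}\,dx} < \frac{(I_1^2 + I_2^2)\,\pi}{4\pi\,(I_1^2+I_2^2)} = \frac14,
\end{equation*}
which is the claim. I do not expect a genuine obstacle: the only delicate points are the finiteness and positivity of $I_0,I_1,I_2$ — a two–dimensional phenomenon, the critical Hardy weight $|x|^{-2}(\log\frac{a}{|x|})^{-2}$ being integrable near the origin — and keeping the inequality strict, which is automatic since admissibility is itself a strict inequality (the threshold $8\pi$ being calibrated exactly so that $|B_1|\big/(\tfrac12\cdot 8\pi) = \tfrac14$).
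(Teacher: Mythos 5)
Your proposal is correct and follows essentially the same route as the paper: test the Rayleigh quotient with a degree-one polynomial whose weighted average vanishes, use the symmetry of $B_L$ to reduce the weighted $L^2$-norm from below to $\frac{I_1^2+I_2^2}{2}\int_{B_L}\big(\log\frac{a}{|x|}\big)^{-2}dx$, and conclude from the admissibility condition together with $|\Omega|\le \pi$. The only (immaterial) difference is the choice of test function: the paper takes the rotated linear function $\alpha_2 x_1-\alpha_1 x_2$, which satisfies the orthogonality constraint automatically, whereas you enforce it by subtracting a suitable constant from $I_1x_1+I_2x_2$ and then discard the resulting nonnegative extra term.
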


\begin{proof}
Set $\alpha_i = \int_{\Omega} \frac{x_i}{|x|^2 \( \log \frac{a}{|x|} \)^2} \,dx$ for $i=1,2$ and $u(x) = \alpha_2 x_1 - \alpha_1 x_2$. 
Note that $u \in H^1 (\Omega)$ satisfies that $\int_{\Omega} \frac{u}{|x|^2 \( \log \frac{a}{|x|} \)^2} \,dx = 0$. 
Testing $\la_a (\Omega)$ by $u(x) = \al_2 x_1 - \al_1 x_2$, we have
\begin{align*}
	\int_{\Omega} |\nabla u|^2 dx = (\al_1^2 + \al_2^2) |\Omega|.
\end{align*}
From the symmetry, we have
\begin{align*}
	&\int_{\Omega} \frac{|u|^2}{|x|^2 \( \log \frac{a}{|x|} \)^2} \,dx \ge \int_{B_L} \frac{|u|^2}{|x|^2 \( \log \frac{a}{|x|} \)^2} \,dx \\
	&= \al_1^2 \int_{\Omega} \frac{x_1^2}{|x|^2 \( \log \frac{a}{|x|} \)^2} \,dx + \al_2^2 \int_{B_L} \frac{x_2^2}{|x|^2 \( \log \frac{a}{|x|} \)^2} \,dx 
	+ 2 \al_1 \al_2 \int_{B_L} \frac{x_1 x_2}{|x|^2 \( \log \frac{a}{|x|} \)^2} \,dx \\
	&= \frac{\al_1^2 + \al_2^2}{2} \int_{B_L} \frac{1}{\( \log \frac{a}{|x|} \)^2} \,dx.
\end{align*}
Therefore, 
\begin{align*}
	\la_a(\Omega) &\le \frac{\int_{\Omega} |\nabla u|^2\,dx }{\int_{\Omega} \frac{|u|^2}{|x|^2 \( \log \frac{a}{|x|} \)^2} \,dx} 
	\le \frac{(\al_1^2 + \al_2^2) |\Omega|}{\frac{\al_1^2 + \al_2^2}{2} \int_{B_L} \frac{1}{\( \log \frac{a}{|x|} \)^2} \,dx} 
	= \frac{2|\Omega|}{\int_{B_L} \frac{1}{\( \log \frac{a}{|x|} \)^2} \,dx} \\
	&\le \frac{2\pi}{\int_{B_L} \frac{1}{\( \log \frac{a}{|x|} \)^2} \,dx} < \frac{1}{4}
\end{align*}
since $|\Omega | < |B_1| = \pi$ and $(a, L)$ is admissible. 
Thus we have $\la_a(\Omega) < 1/4$. 
\end{proof}

From Proposition \ref{prop value}, Proposition \ref{prop value2} and Theorem \ref{thm attain}, we have the following.

%\begin{cor}\label{cor attain}
%%Let $a>1$. Assume that there exist $\delta, \theta_*, \theta^* >0$ such that $0 \le \theta_* < \theta^* < 2\pi, \,(1-\delta, 1) \times (\theta_*, \theta^*) \subset \Omega$ or assume that the domain $\Omega$ satisfies the condition $(L)$. 
%Assume $\Omega$ satisfies the assumptions in Proposition  \ref{prop value}, or Proposition \ref{prop value2}.
%Then, there exists the second eigenfunction of the eigenvalue problem (N) for $a$ close to $1$.
%%\begin{align*}
%%	(N)\,
%%	\begin{cases}
%%	-\lap u = \la \frac{u}{|x|^2 \( \log \frac{a}{|x|} \)^2}\quad &{\rm in}\,\, \Omega, \\
%%	\frac{\pd u}{\pd \nu} = 0 \quad &{\rm on} \,\, \pd \Omega.
%%	\end{cases}
%%\end{align*}
%\end{cor}

\begin{cor}\label{cor attain}
Assume $\Omega$ satisfies the condition (A) and $a >1$ sufficiently close to 1,
or condition (L) for an admissible pair $(a, L)$.
Then there exists the second eigenfunction of the eigenvalue problem (N).
\end{cor}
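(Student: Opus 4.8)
The plan is to prove the corollary by combining the existence criterion of Theorem~\ref{thm attain} with the two upper bounds on $\la_a$ established above, and then upgrading the resulting minimizer to a genuine second eigenfunction of (N) by a Lagrange multiplier argument.

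First I would verify that each of the two hypotheses forces $\la_a<\frac14$. If $\Omega$ satisfies condition (A), then Proposition~\ref{prop value} gives $\la_a\le C\log a$ for $a>1$, with $C$ independent of $a$, so that $\la_a<\frac14$ as soon as $a>1$ is sufficiently close to $1$; if $\Omega$ satisfies condition (L) for an admissible pair $(a,L)$, then Proposition~\ref{prop value2} gives $\la_a(\Omega)<\frac14$ at once. Hence in either case $\la_a<\frac14$, and Theorem~\ref{thm attain} provides a minimizer $u\in\mathcal{A}$, i.e. $u\in H^1(\Omega)\setminus\{0\}$ with $\int_\Omega \frac{|u|^2}{|x|^2(\log\frac{a}{|x|})^2}\,dx=1$, $\int_\Omega \frac{u}{|x|^2(\log\frac{a}{|x|})^2}\,dx=0$ and $\int_\Omega|\nabla u|^2\,dx=\la_a$.

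Next I would derive the Euler--Lagrange equation. Writing $m(x)=\frac{1}{|x|^2(\log\frac{a}{|x|})^2}$, the minimizer $u$ minimizes $\int_\Omega|\nabla u|^2\,dx$ over the manifold cut out by the two constraints $G_1(u)=\int_\Omega m\,u^2\,dx=1$ and $G_2(u)=\int_\Omega m\,u\,dx=0$. These constraints are independent at $u$ (otherwise $u$ would be constant, which is incompatible with $G_1(u)=1$, $G_2(u)=0$), and the relevant quadratic and linear forms are well defined and $C^1$ on $H^1(\Omega)$ thanks to the critical Hardy inequality \eqref{H^1 CH} (together with $0<\int_\Omega m\,dx<\infty$ for $a>1$). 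The Lagrange multiplier rule then provides constants $\la,\mu$ with
\begin{equation*}
	\int_\Omega \nabla u\cdot\nabla\phi\,dx=\la\int_\Omega m\,u\,\phi\,dx+\mu\int_\Omega m\,\phi\,dx\qquad\text{for all }\phi\in H^1(\Omega),
\end{equation*}
that is, $-\Delta u=\la\,m\,u+\mu\,m$ in $\Omega$ with $\frac{\pd u}{\pd\nu}=0$ on $\pd\Omega$ in the weak sense. Taking $\phi\equiv1$ and using $G_2(u)=0$ gives $\mu\int_\Omega m\,dx=0$, hence $\mu=0$; taking $\phi=u$ gives $\la=\int_\Omega|\nabla u|^2\,dx=\la_a$. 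Thus $u$ solves (N) with eigenvalue $\la_a$.

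Finally I would record that $u$ is indeed a second eigenfunction in the sense used here: the first eigenvalue of (N) is $0$ with the constants as eigenfunctions, while \eqref{average CH} gives $\la_a>0$, so $u$ is a nonconstant eigenfunction; moreover $G_2(u)=0$ means $u$ is orthogonal to the constants in $L^2(\Omega,m\,dx)$, and by construction $\la_a$ is the least value of the Rayleigh quotient $\int|\nabla u|^2/\int m\,u^2$ under this orthogonality. The main obstacle I anticipate is this middle step, and within it the verification that the multiplier $\mu$ attached to the linear constraint $G_2$ vanishes, so that $u$ solves the homogeneous Neumann problem (N) rather than an inhomogeneous perturbation of it; once the weighted forms are known to be continuous on $H^1(\Omega)$ — exactly the content of \eqref{H^1 CH} — this is settled by testing the equation against the constant function.
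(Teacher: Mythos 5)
Your proposal is correct and follows essentially the same route as the paper: in either case Proposition \ref{prop value} (with $a$ close to $1$) or Proposition \ref{prop value2} gives $\la_a<\frac14$, Theorem \ref{thm attain} yields a minimizer, and one then derives the weak Neumann Euler--Lagrange equation for all $\phi\in H^1(\Omega)$. The only cosmetic difference is in the last step: the paper first obtains the identity for test functions with zero weighted average and then extends it to all of $H^1(\Omega)$ by subtracting the weighted mean of $\phi$, whereas you invoke the two-constraint Lagrange multiplier rule and eliminate the multiplier of the linear constraint by testing with $\phi\equiv 1$ — the two steps are equivalent.
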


\begin{proof}
From Proposition \ref{prop value} or Proposition \ref{prop value2}, we have $\la_a < 1/4$ and by Theorem \ref{thm attain}, 
there exists a minimizer $u$ of $\la_a$. 
Now, we check the Euler-Lagrange equation satisfied by $u$.
Let $\phi \in H^1 (\Omega)$ with $\int_\Omega \frac{\phi }{|x|^2 \( \log \frac{a}{|x|} \)^2} \, dx =0$. 
Set
\begin{align*}
	h (t) = \frac{\int_{\Omega} |\nabla (u + t \phi )|^2\,dx }{\int_{\Omega} \frac{| u + t \phi |^2}{|x|^2 \( \log \frac{a}{|x|} \)^2} \,dx}. 
\end{align*}
Since the function $h$ attains a minimum at $t =0$, we have
\begin{align*}
	\int_\Omega \nabla u \cdot \nabla \phi \,dx - \la_a \int_\Omega \frac{u \phi }{|x|^2 \( \log \frac{a}{|x|} \)^2} \, dx =0.
\end{align*}
To extend this identity for any $\phi \in H^1 (\Omega),$ we set 
\begin{align*}
	\psi = \phi - \frac{\int_\Omega \frac{\phi }{|x|^2 \( \log \frac{a}{|x|} \)^2} \, dx}{\int_\Omega \frac{1}{|x|^2 \( \log \frac{a}{|x|} \)^2} \, dx}.
\end{align*}
Since $\int_\Omega \frac{\psi }{|x|^2 \( \log \frac{a}{|x|} \)^2} \, dx=0$ and $u$ is orthogonal to $1$ in $L^2 \( \Omega, \frac{dx}{|x|^2 \( \log \frac{a}{|x|} \)^2} \)$, we have
\begin{align*}
	\int_\Omega \nabla u \cdot \nabla \phi \,dx
	= \int_\Omega \nabla u \cdot \nabla \psi \,dx
	=\la_a \int_\Omega \frac{u \psi }{|x|^2 \( \log \frac{a}{|x|} \)^2} \, dx
	= \la_a \int_\Omega \frac{u \phi }{|x|^2 \( \log \frac{a}{|x|} \)^2} \, dx
\end{align*}
for any $\phi \in H^1 (\Omega)$. 
\end{proof}

%%%%%%%%%%%%%%%%%%%%%%%%%%%%%%%%%%%%%%%%%%%%%%%%%%%%%%%%%%%%%%%%%%%%%%%%%%%%%%%%%%%%%%%%%%%%%%%%%%%%%%%%%%%%%%%%%%%
%
% \S4 Asymptotic behavior of the second eigenfunction around $0$,
%
%%%%%%%%%%%%%%%%%%%%%%%%%%%%%%%%%%%%%%%%%%%%%%%%%%%%%%%%%%%%%%%%%%%%%%%%%%%%%%%%%%%%%%%%%%%%%%%%%%%%%%%%%%%%%%%%%%%
\section{Asymptotic behavior of the second eigenfunction around $0$.}\label{S Behavior}

In this section, we study the asymptotic behavior of the second eigenfunction around the origin, which is obtained in \S \ref{S Existence}. 
To obtain the asymptotic behavior in our case by De Giorgi-Nash-Moser iteration technique (\cite{Han}, \cite{HL}, \cite{CPR}),
we need a Sobolev type inequality with a logarithmic weight in Theorem \ref{Thm ineq log} and its extension to $H^1(B_r)$ in Lemma \ref{Lemma ineq log}.

\begin{theorem}\label{Thm singularity}
Let $\la_a \in (0, \frac{1}{4})$ be the second eigenvalue of $(N)$ and $u_a \in H^1 (\Omega)$ be the corresponding second eigenfunction of $(N)$. 
Then there exist $\delta > 0$ and $C >0$ such that
\begin{align*}
	|u_a (x)| \le C \( \log \frac{a}{|x|} \)^{\frac{1}{2} - \frac{\sqrt{1- 4 \la_a}}{2}}\quad 
	\begin{cases}
	&\text{for}\,\, x \in B_\delta \setminus \{ 0\} \quad \text{if} \,\, 0 \in \Omega, \\
	&\text{for}\,\, x \in B_\delta \cap \Omega \quad \text{if} \,\, 0 \in \pd\Omega.
	\end{cases}
\end{align*}
\end{theorem}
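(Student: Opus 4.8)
The plan is to run a De Giorgi–Nash–Moser iteration adapted to the logarithmic weight, in the spirit of \cite{CPR} and \cite{HL}, using Theorem \ref{Thm ineq log} (or Lemma \ref{Lemma ineq log} when $0 \in \pd\Omega$) as the substitute for the ordinary Sobolev inequality. First I would fix small $\delta>0$ so that $B_{2\delta}\subset\Omega$ (resp. $\Omega\cap B_{2\delta}=B_{2\delta}^h$ with $|h'|$ small) and work on balls $B_\rho$ with $\rho\le\delta$. Writing $w=\log\frac{a}{|x|}$, the eigenfunction satisfies $-\Delta u_a=\la_a\,|x|^{-2}w^{-2}u_a$ weakly. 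The key quantitative input is the Hardy inequality \eqref{H^1_0 CH}: for $\eta u_a$ with $\eta$ a cutoff supported in $B_{2\delta}$, testing the equation against $\eta^2 u_a$ and using \eqref{H^1_0 CH} on the right shows that the ``bad'' term $\la_a\int \eta^2 u_a^2\,|x|^{-2}w^{-2}$ is absorbed by $\int|\nabla(\eta u_a)|^2$ up to the factor $4\la_a<1$, leaving a controlled remainder. This is exactly the mechanism by which $\la_a<\tfrac14$ enters, and it is what lets the iteration close.

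The iteration proper proceeds on powers $v_k=|u_a|^{\beta_k}$ (or truncations thereof) with $\beta_k=(p/2)^k$, where $p>2$ is chosen as in Theorem \ref{Thm ineq log}. At each stage I would test the equation against $\eta^2|u_a|^{2\beta-2}u_a$, integrate by parts, and arrive at a Caccioppoli-type estimate
\begin{align*}
	\int_{B_\rho}\(\log\tfrac{a}{|x|}\)^{B}|\nabla(\eta|u_a|^{\beta})|^2\,dx
	\le \frac{C\,\beta^2}{(\rho'-\rho)^2}\int_{B_{\rho'}}\(\log\tfrac{a}{|x|}\)^{B}|u_a|^{2\beta}\,dx,
\end{align*}
after the $\la_a$-term has been absorbed as above. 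Applying \eqref{ineq log} with $A=1+\tfrac{p}{2}(1-B)$ turns the left side into an $L^p$-norm of $|u_a|^\beta$ against the weight $|x|^{-2}w^{-A}$, i.e. an $L^{p\beta}$-bound for $u_a$ with a gain of exponent $p/2$ at each step. Iterating over shrinking radii $\rho_k\downarrow\delta/2$ and summing the geometric series in $\log\beta_k/\beta_k$ yields an $L^\infty$ bound for $u_a$ on $B_{\delta/2}$ against the natural weighted $L^2$-norm, hence $u_a\in L^\infty_{\mathrm{loc}}$ near $0$; in the boundary case one uses Lemma \ref{Lemma ineq log} in place of Theorem \ref{Thm ineq log} at each step, which only costs the harmless factor $2^{2/p-1}$ and the $(1+\ep)$.

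To upgrade $L^\infty$-boundedness to the sharp rate $\(\log\frac{a}{|x|}\)^{\frac12-\frac{\sqrt{1-4\la_a}}{2}}$, the idea is to compare $u_a$ with the explicit radial supersolution of the model operator: the exponent $\gamma_{\pm}=\tfrac12\pm\tfrac12\sqrt{1-4\la_a}$ are precisely the roots of $\gamma(\gamma-1)+\la_a=0$ arising when one plugs $(\log\frac{a}{|x|})^{\gamma}$ into $-\Delta$ in two dimensions, since $\Delta\big((\log\frac{a}{r})^\gamma\big)=\gamma(\gamma-1)(\log\frac{a}{r})^{\gamma-2}|x|^{-2}+\gamma(\log\frac{a}{r})^{\gamma-1}|x|^{-2}$ and the lower-order term is negligible. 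Thus $\bar U(x)=C(\log\frac{a}{|x|})^{\gamma_-}$ is a (super)solution of $-\Delta\bar U\ge \la_a|x|^{-2}w^{-2}\bar U$ near $0$ for $C$ large, $\delta$ small; choosing $C$ so that $\bar U\ge |u_a|$ on $\partial B_\delta$ (possible by the $L^\infty$ bound just proved) and applying the weak maximum principle on $B_\delta\setminus\{0\}$ — with the removability of the origin guaranteed because $u_a\in H^1$ and the Hardy inequality prevents concentration — gives $|u_a|\le \bar U$, which is the claim. In the boundary case one reflects/extends across $\partial\Omega$ using the flattening of Lemma \ref{Lemma ineq log} and the Neumann condition, or simply runs the comparison on $\Omega\cap B_\delta$ with $\partial\nu u_a=0$ on the curved part.

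The main obstacle I anticipate is making the absorption of the Hardy term uniform through all the iteration steps: each testing against $\eta^2|u_a|^{2\beta-2}u_a$ reproduces a critical-weight integral $\la_a\int\eta^2|u_a|^{2\beta}|x|^{-2}w^{-2}$ on the right, and one must check that applying \eqref{H^1_0 CH} to $\eta|u_a|^\beta$ still leaves a net coefficient $<1$ independently of $\beta$ — which works because the Hardy constant $\tfrac14$ and $\la_a$ are both $\beta$-independent, but the bookkeeping of the cross terms $\nabla\eta\cdot\nabla|u_a|^\beta$ and the weight $(\log\frac{a}{|x|})^B$ versus $(\log\frac{a}{|x|})^{-2}$ requires care. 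The boundary case adds the technical nuisance of the diffeomorphism distortion from Lemma \ref{Lemma ineq log}, but since the distortion constants are $1+O(\delta)$ they do not accumulate badly over finitely many effective scales.
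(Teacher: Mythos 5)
There is a genuine gap in the first (and load-bearing) step. When you test the equation with $\eta^2|u_a|^{2\beta-2}u_a$, the left-hand side produces the gradient term with a coefficient that degrades in $\beta$: after the standard manipulations it is of order $\frac{2\beta-1}{\beta^2}\int|\nabla(\eta|u_a|^{\beta})|^2$, while the Hardy estimate of the right-hand side contributes the fixed amount $4\la_a\int|\nabla(\eta|u_a|^{\beta})|^2$. Hence the absorption you rely on ("works because the Hardy constant and $\la_a$ are $\beta$-independent") closes only for small powers and fails once $\beta\gtrsim 1/(2\la_a)$; it cannot be made uniform along the iteration. This is not a bookkeeping issue but a real obstruction: the intermediate conclusion you aim for, $u_a\in L^{\infty}$ near the origin, is false in general. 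As recalled in the paper's Remark (Adimurthi--Sandeep), eigenfunctions of this operator genuinely blow up like $\bigl(\log\frac{a}{|x|}\bigr)^{\alpha}$ with $\alpha=\frac12-\frac{\sqrt{1-4\la_a}}{2}>0$, which is exactly why the theorem asserts a logarithmically growing bound rather than boundedness. The paper sidesteps the absorption problem entirely by a ground-state substitution: since $V_a(x)=\bigl(\log\frac{a}{|x|}\bigr)^{\alpha}$ is an exact solution of $-\Delta V_a=\la_a|x|^{-2}\bigl(\log\frac{a}{|x|}\bigr)^{-2}V_a$ (in two dimensions the first-order radial terms cancel exactly, not merely "negligibly" as in your expansion), the function $v=u_a/V_a$ satisfies ${\rm div}\bigl(\bigl(\log\frac{a}{|x|}\bigr)^{2\alpha}\nabla v\bigr)=0$, with no zeroth-order term left to absorb; Moser iteration is then run on $v$ using Theorem \ref{Thm ineq log} with $B=2\alpha$, $A>1$, $p=2\frac{A-1}{1-B}$ (and Lemma \ref{Lemma ineq log} when $0\in\pd\Omega$), giving $v\in L^{\infty}$ and hence the stated estimate.

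Your second paragraph (comparison with the model profile $\bigl(\log\frac{a}{|x|}\bigr)^{\gamma_-}$) is closer in spirit to the correct mechanism, since it isolates the right exponent, and in the interior case it could probably be developed into an alternative proof: $u_a$ is locally bounded away from the origin by standard elliptic regularity, $\bigl(\log\frac{a}{|x|}\bigr)^{\gamma_-}\in H^1(B_\delta)$, and testing the equation for $u_a-\bar U$ against $(u_a-\bar U)^+$ lets the Hardy inequality absorb the potential at the quadratic level precisely because $4\la_a<1$ (the naive maximum principle does not apply directly, as the zeroth-order coefficient has the wrong sign). But as written your argument makes this step depend on the flawed $L^\infty$ claim, and the Neumann case $0\in\pd\Omega$ is only waved at ("reflect/extend"), whereas the paper handles it by applying Lemma \ref{Lemma ineq log} inside the iteration for the transformed function $v$.
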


%%%%%%%%%%%%%%%%%%%%%%%%%%%%%%%%

\begin{remark}
It is known that for $\nu \in (0,1)$, $a \ge e$, and $0 \in \Omega$,
the solution $u_\nu \in H_0^1 (\Omega)$ of the following eigenvalue problem corresponding $\la = \la(\nu)$:
\begin{align*}
(D)\,
	\begin{cases}
	-\lap u - \frac{\nu}{4} \frac{u}{|x|^2 \( \log \frac{a}{|x|} \)^2} = \la u \quad \text{in}\,\, \Omega, \\
	u>0 \quad \text{in}\,\, \Omega,\, u = 0 \quad \text{on} \,\, \pd \Omega.
	\end{cases}
\end{align*}
satisfies the following estimates.
\begin{align*}
	&C_1 \le \liminf_{x \to 0} \( \log \frac{a}{|x|} \)^{-\frac{1}{2} + \frac{\sqrt{1- \nu}}{2}} |u_\nu (x)| \le \limsup_{x \to 0} \( \log \frac{a}{|x|} \)^{-\frac{1}{2} + \frac{\sqrt{1- \nu}}{2}} |u_\nu (x)| \le C_2 \\
	&\limsup_{x \to 0} \( \log \frac{a}{|x|} \)^{-\frac{1}{2} + \frac{\sqrt{1- \nu}}{2}} |x| \,|\nabla u_\nu (x)| \le C_2
\end{align*}
See \cite{AS} Theorem 1.5. 
\end{remark}

\begin{proof}
Put $u_a = u$ for simplicity. Define 
\begin{align*}
	v(x) = \( \log \frac{a}{|x|} \)^{-\alpha} u(x) = \frac{u(x)}{V_a(x)},
\end{align*}
where $V_a(x) =  \( \log \frac{a}{|x|} \)^{\alpha}$ and $\alpha = \frac{1}{2} - \frac{\sqrt{1- 4 \la_a}}{2}$. 
Note that $V_a$ satisfies the equation $-\Delta V_a =  \frac{\la_a}{|x|^2 \(\log \frac{a}{|x|}\)^2} V_a$ in $\Omega$.
By straightforward calculations, we see that $v \in H^1 \( \Omega, \( \log \frac{a}{|x|} \)^{2\alpha } dx \)$ and 
\begin{align}\label{eq v}
	{\rm div} \( \( \log \frac{a}{|x|} \)^{2\alpha } \nabla v \) = 0 \quad \text{in}\,\, \Omega. 
\end{align} 
We shall show $v \in L^\infty$.
Let $0 < r < \rho$. We put $\phi = \eta^2 v v^{2(s-1)}_\ell$, where $\ell, s >1$, $v_\ell = \min\{ |v|, \ell \}$, 
and $\eta$ is a $C^1$-function such that $\eta = 1$ on $B_r, \eta =0$ on $\re^2 \setminus B_\rho$ and $|\nabla \eta| \le \frac{4}{\rho -r}$ on $\re^2$. Testing (\ref{eq v}) with $\phi$ we have
\begin{align*}
	0&= \int_{\Omega} \( \log \frac{a}{|x|} \)^{2\alpha} \nabla v \cdot \nabla \( \eta^2 v v^{2(s-1)}_\ell \) \,dx \\
	&= \int_{\Omega} \( \log \frac{a}{|x|} \)^{2\alpha} \left[ 2\eta v v^{2(s-1)}_\ell \nabla v \cdot \nabla \eta + \eta^2 v^{2(s-1)}_\ell |\nabla v |^2 + 2(s-1) v^{2(s-1)}_\ell |\nabla v_\ell |^2 \eta^2 \right] \,dx.
\end{align*}
Therefore, we have
\begin{align*}
	&\int_{\Omega} \( \log \frac{a}{|x|} \)^{2\alpha} \left[ \eta^2 v^{2(s-1)}_\ell |\nabla v |^2 + 2(s-1) v^{2(s-1)}_\ell |\nabla v_\ell |^2 \eta^2 \right] \,dx \\
	&= - \int_{\Omega} \( \log \frac{a}{|x|} \)^{2\alpha} 2\eta v v^{2(s-1)}_\ell \nabla v \cdot \nabla \eta \,dx \\
	&\le \frac{1}{2} \int_{\Omega} \( \log \frac{a}{|x|} \)^{2\alpha}  \eta^2 v^{2(s-1)}_\ell |\nabla v |^2 \,dx + C \int_{\Omega} \( \log \frac{a}{|x|} \)^{2\alpha} v^2 v^{2(s-1)}_\ell | \nabla \eta |^2 \,dx
\end{align*}
which implies that
\begin{align}\label{estimate1}
	&\int_{\Omega} \( \log \frac{a}{|x|} \)^{2\alpha} \left[ \frac{1}{2} \eta^2 v^{2(s-1)}_\ell |\nabla v |^2 + 2(s-1) v^{2(s-1)}_\ell |\nabla v_\ell |^2 \eta^2 \right] \,dx  \notag \\
	&\le C \int_{\Omega} \( \log \frac{a}{|x|} \)^{2\alpha} v^2 v^{2(s-1)}_\ell | \nabla \eta |^2 \,dx
\end{align}
Here, we have used the inequality \eqref{ineq log} in Theorem \ref{Thm ineq log} with $u = \eta v v_\ell^{s-1}$. 
We choose $B = 2\alpha \in [0, 1)$, $A >1$, $p= 2\frac{A-1}{1-B} >2$. 
By using Theorem \ref{Thm ineq log} for the case where $0 \in \Omega$ and Lemma \ref{Lemma ineq log} for the case where $0 \in \pd \Omega$, 
we have
\begin{align*}
	&\( \int_{\Omega \cap B_\rho} \frac{|\eta v v_\ell^{s-1}|^p}{|x|^2 (\log \frac{a}{|x|})^A} \,dx \)^{\frac{2}{p}} \\
	&\le C \int_{\Omega \cap B_\rho} \( \log \frac{a}{|x|} \)^{2\alpha} |\nabla (\eta v v_\ell^{s-1})|^2 \,dx\\
	&\le C \int_{\Omega \cap B_\rho} \( \log \frac{a}{|x|} \)^{2\alpha} \left[ |\nabla \eta|^2 v^2 v_\ell^{2(s-1)} + \eta^2 v^{2(s-1)}_\ell |\nabla v |^2 + (s-1)^2 v^{2(s-1)}_\ell |\nabla v_\ell |^2 \eta^2 \right] \,dx\\
	&\le C s \int_{\Omega \cap B_\rho} \( \log \frac{a}{|x|} \)^{2\alpha} v^2 v^{2(s-1)}_\ell | \nabla \eta |^2 \,dx,
\end{align*}
where the last inequality comes from (\ref{estimate1}). 
Since $v^2 v_\ell^{-2} \le v^p v_\ell^{-p}$ and $|x|^2 \( \log \frac{a}{|x|} \)^{A+2\alpha} \le C$ for any $x \in \Omega$, 
we have
\begin{align}\label{estimate2}
	\( \int_{\Omega \cap B_r} \frac{v^2 v_\ell^{ps-2}}{|x|^2 (\log \frac{a}{|x|})^A} \,dx \)^{\frac{2}{p}} 
	\le \frac{Cs}{(\rho -r)^2} \int_{\Omega \cap B_\rho} \frac{v^2 v^{2s-2}_\ell }{|x|^2 (\log \frac{a}{|x|})^A } \,dx.
\end{align}
Take $\rho_0 > 0$ such that $B_{2\rho_0} \subset \Omega$ and
$$
	s_0 =2, s_j = s_0 \( \frac{p}{2} \)^j, r_j = \rho_0 (1+ \rho_0^j) \quad \text{for} \ j =0,1, 2, \cdots. 
$$
Applying the inequality (\ref{estimate2}) with $\rho = r_j, r= r_{j+1}$, 
we obtain
\begin{align*}
	\( \int_{\Omega \cap B_{r_{j+1}}} \frac{ v^2 v_\ell^{2s_{j+1}-2}}{|x|^2 (\log \frac{a}{|x|})^A} \,dx \)^{\frac{1}{2s_{j+1}}} 
	\le \( \frac{Cs_j}{(\rho_0 -\rho_0^2)^2 \rho_0^{2j}} \)^{\frac{1}{2s_j}} \(  \int_{\Omega \cap B_{r_j}} \frac{v^2 v^{2s_j-2}_\ell}{|x|^2 (\log \frac{a}{|x|})^A } \,dx\)^{\frac{1}{2s_j}}.
\end{align*}
Therefore, we have
\begin{align*}
	\( \int_{\Omega \cap B_{r_{j+1}}}v^2 v_\ell^{2s_{j+1}-2} \,dx \)^{\frac{1}{2s_{j+1}}} 
	&\le \( \frac{Cs_j}{\rho_0^{2j}} \)^{\frac{1}{2s_j}} \(  \int_{\Omega \cap B_{r_j}}  v^2 v^{2s_j-2}_\ell \,dx\)^{\frac{1}{2s_j}} \\
	&\le \( \sqrt{C} \)^{\sum_{k=0}^j s_k^{-1}} \rho_0^{- \sum_{k=0}^j k s_k^{-1}} \( \prod_{k=0}^j s_k^{\frac{1}{2 s_k}} \) \(  \int_{\Omega \cap B_{r_0}}  \frac{v^2 v^{2s_0-2}_\ell}{|x|^2 (\log \frac{a}{|x|})^A } \,dx\)^{\frac{1}{2s_0}}
\end{align*}
which implies that
\begin{align*}
	\| v \|_{L^\infty (\Omega \cap B_{\rho_0})} 
	&= \lim_{\ell \to \infty} \lim_{j \to \infty} \| v_\ell \|_{L^{2s_{j+1}} (\Omega \cap B_{\rho_0})} \\
	&\le \( \sqrt{C} \)^{\sum_{k=0}^\infty s_k^{-1}} \rho_0^{- \sum_{k=0}^\infty k s_k^{-1}} \( \prod_{k=0}^\infty s_k^{\frac{1}{2 s_k}} \) \( \int_\Omega \( \log \frac{a}{|x|} \)^{2\alpha} |\nabla v |^2 \,dx\)^{\frac{1}{2}}.
\end{align*}
Since the infinite sums and the infinite product on the right-hand side of the above inequality are finite, $v \in L^\infty (\Omega \cap B_{\rho_0})$. 
The desired result follows.  
\end{proof}

\section{The Robin boundary conditions}

As in the former sections, we can consider the minimization problem 
\begin{align*}
	\la_a^R  := \inf \left\{ \, \frac{\int_{\Omega} |\nabla u|^2 \,dx + \int_{\pd \Omega} \beta u^2 dS }{\int_{\Omega} \frac{|u|^2}{|x|^2 (\log \frac{a}{|x|})^2} \,dx } 
	\,\,\middle| \,\, u \in H^1 (\Omega ) \setminus \{ 0\} \, \right\},
\end{align*}
where $\beta$ is a continuous function on $\pd \Omega$ and $a >1$. 
Then $\la_a^R$ is the first (smallest) eigenvalue of the following linear eigenvalue problem with Robin boundary conditions
\begin{align*}
	(R)\,
	\begin{cases}
	-\lap u = \la \frac{u}{|x|^2 \( \log \frac{a}{|x|} \)^2}\quad &\text{in}\,\, \Omega, \\
	\frac{\pd u}{\pd \nu} + \beta u = 0 \quad &\text{on} \,\, \pd \Omega.
\end{cases}
\end{align*}
Clearly we have $\la_a^R \le \frac{1}{4}$ for any $\beta \in C(\pd\Omega)$ by using $H_0^1 (\Omega) \subset H^1 (\Omega)$ and the inequality (\ref{H^1_0 CH}).
In this section we prove some results about $\la_a^R$.
Since the arguments we use are similar to those in \S \ref{S Existence} and \S \ref{S Behavior}, 
we will omit the most proofs here. 

%%%%%%%%%%%%%%%%%%%%%%%%%%%%%%%%%%%%%%%%%%%%%%%%%%%%%%%%%%%%%%%%%%%%%%%%%%%%%%%%%%%%%%%%%%%%%%%%%%%%%%%%%%%%%%%%%%%
%
% Theorem: \label{thm attain beta +}($\la_a^R$ with a non-negative coefficient $\beta$)
%
%%%%%%%%%%%%%%%%%%%%%%%%%%%%%%%%%%%%%%%%%%%%%%%%%%%%%%%%%%%%%%%%%%%%%%%%%%%%%%%%%%%%%%%%%%%%%%%%%%%%%%%%%%%%%%%%%%%
First, we assume that $\beta$ is a non-negative continuous function. 
\begin{theorem}\label{thm attain beta +}($\la_a^R$ with a non-negative coefficient $\beta$)
Assume $0 \in \ol{\Omega}$.
Let $a > 1$, $\beta \ge 0$, $\beta \not\equiv 0$. 
Then $\la_a^R >0$. 
Furthermore, if $\la^R_a < \frac{1}{4}$, then $\la^R_a$ is attained. 
\end{theorem}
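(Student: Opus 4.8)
The plan is to mirror the two-part structure already used for Theorem \ref{thm attain}, treating separately the positivity claim $\la_a^R > 0$ and the attainment claim under $\la_a^R < \tfrac14$. For positivity, I would argue by contradiction: take a minimizing sequence $\{u_m\}$ normalized so that $\int_\Omega \frac{|u_m|^2}{|x|^2(\log\frac{a}{|x|})^2}\,dx = 1$, and suppose $\int_\Omega |\nabla u_m|^2\,dx + \int_{\pd\Omega}\beta u_m^2\,dS \to 0$. Since $\beta \ge 0$, both terms are nonnegative, so $\|\nabla u_m\|_{L^2(\Omega)}\to 0$ and $\int_{\pd\Omega}\beta u_m^2\,dS \to 0$. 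After passing to a subsequence, $u_m \weakto u$ in $H^1(\Omega)$, $u_m \to u$ in $L^2(\Omega)$ (compact embedding) and $u_m \to u$ in $L^2(\pd\Omega)$ (compact trace embedding); as in Proposition \ref{prop average H}, $\nabla u = 0$ a.e., so $u$ is a constant $c$. The weighted normalization passes to the limit by weak $L^2$ convergence in the weighted space (the weight is bounded below on $\Omega$), giving $c^2 \int_\Omega \frac{dx}{|x|^2(\log\frac{a}{|x|})^2} = 1$, so $c \ne 0$. But then $\int_{\pd\Omega}\beta u_m^2\,dS \to c^2 \int_{\pd\Omega}\beta\,dS > 0$ since $\beta \ge 0$, $\beta \not\equiv 0$, contradicting the convergence to $0$. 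Hence $\la_a^R > 0$.

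For the attainment claim, I would again take a minimizing sequence $\{u_m\}$ with $\int_\Omega \frac{|u_m|^2}{|x|^2(\log\frac{a}{|x|})^2}\,dx = 1$ and $\int_\Omega |\nabla u_m|^2\,dx + \int_{\pd\Omega}\beta u_m^2\,dS \to \la_a^R$. Since $\beta \ge 0$ the quadratic form dominates $\|\nabla u_m\|_{L^2(\Omega)}^2$, and combined with the normalization (the weight is bounded below by a positive constant on $\Omega$) this gives boundedness of $\{u_m\}$ in $H^1(\Omega)$. Extract a subsequence with $u_m \weakto u$ in $H^1(\Omega)$, weakly in $L^2(\Omega,\frac{dx}{|x|^2(\log\frac{a}{|x|})^2})$, strongly in $L^2(\Omega)$ and in $L^2(\pd\Omega)$, and a.e. in $\Omega$. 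The trace term is continuous under strong $L^2(\pd\Omega)$ convergence, so $\int_{\pd\Omega}\beta u^2\,dS = \lim \int_{\pd\Omega}\beta u_m^2\,dS$, while $\int_\Omega|\nabla u|^2\,dx \le \liminf \int_\Omega|\nabla u_m|^2\,dx$ by weak lower semicontinuity, and $\int_\Omega \frac{|u|^2}{|x|^2(\log\frac{a}{|x|})^2}\,dx \le 1$ by weak lower semicontinuity in the weighted space. The crux, exactly as in Theorem \ref{thm attain}, is to upgrade this to the equality $\int_\Omega \frac{|u|^2}{|x|^2(\log\frac{a}{|x|})^2}\,dx = 1$.

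To do that I would run the same concentration-compactness dichotomy argument. Set $u_m - u =: w_m \weakto 0$. First show $u \not\equiv 0$: if $u \equiv 0$, then $\int_\Omega u_m^2\,dx \to 0$ and $\int_{\pd\Omega}\beta u_m^2\,dS \to 0$, so by the critical Hardy inequality \eqref{H^1 CH} for $H^1(\Omega)$,
\begin{align*}
	1 = \int_\Omega \frac{|u_m|^2}{|x|^2(\log\frac{a}{|x|})^2}\,dx
	\le (4+\ep)\int_\Omega|\nabla u_m|^2\,dx + C\int_\Omega u_m^2\,dx
	= (4+\ep)\la_a^R + o(1),
\end{align*}
which forces $1 \le 4\la_a^R$ upon letting $m\to\infty$ and $\ep\to 0$, contradicting $\la_a^R < \tfrac14$. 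Next, assuming $0 < \int_\Omega \frac{|u|^2}{|x|^2(\log\frac{a}{|x|})^2}\,dx < 1$, use $\int_\Omega w_m^2\,dx = o(1)$, $\int_{\pd\Omega}\beta w_m^2\,dS = o(1)$, the Brezis--Lieb-type splitting of the weighted norm, \eqref{H^1 CH} applied to $w_m$, and the expansion $\int_\Omega|\nabla u_m|^2\,dx = \int_\Omega|\nabla u|^2\,dx + \int_\Omega|\nabla w_m|^2\,dx + o(1)$ together with $\int_{\pd\Omega}\beta u_m^2\,dS = \int_{\pd\Omega}\beta u^2\,dS + o(1)$ to get, as in the proof of Theorem \ref{thm attain},
\begin{align*}
	0 < 1 - \int_\Omega \frac{|u|^2}{|x|^2(\log\frac{a}{|x|})^2}\,dx \le (4+\ep)\Bigl(\la_a^R - \int_\Omega|\nabla u|^2\,dx - \int_{\pd\Omega}\beta u^2\,dS\Bigr) + o(1).
\end{align*}
Combining with the variational inequality $\la_a^R \int_\Omega \frac{|u|^2}{|x|^2(\log\frac{a}{|x|})^2}\,dx \le \int_\Omega|\nabla u|^2\,dx + \int_{\pd\Omega}\beta u^2\,dS$ (valid for all nonzero $u\in H^1(\Omega)$ by definition of $\la_a^R$) yields $1 - \int_\Omega \frac{|u|^2}{\cdots} \le (4+\ep)\la_a^R\bigl(1 - \int_\Omega\frac{|u|^2}{\cdots}\bigr)$, hence $1 \le (4+\ep)\la_a^R$, and letting $\ep\to 0$ contradicts $\la_a^R < \tfrac14$. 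Therefore the normalization is preserved and $u$ is a minimizer. The main obstacle is the same delicate point as in Theorem \ref{thm attain}: ruling out loss of weighted mass to concentration at the origin, which is precisely where the sharp constant $\tfrac14$ in \eqref{H^1 CH} and the strict inequality $\la_a^R < \tfrac14$ enter; the presence of the nonnegative boundary term $\int_{\pd\Omega}\beta u^2\,dS$ only helps, since it is weakly lower semicontinuous (indeed continuous under the compact trace embedding) and nonnegative, so it does not interfere with any of the estimates above.
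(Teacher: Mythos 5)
Your attainment argument (boundedness of the minimizing sequence, compact trace embedding, ruling out $u \equiv 0$ via \eqref{H^1 CH} and $\la_a^R<\tfrac14$, and the dichotomy step combining the expansion of the weighted norm with the variational inequality $\la_a^R \int_\Omega \frac{u^2}{|x|^2 (\log\frac{a}{|x|})^2}\,dx \le \int_\Omega |\nabla u|^2\,dx + \int_{\pd\Omega}\beta u^2\,dS$, which here indeed needs no orthogonality constraint) is correct and is precisely the ``minor modification of the proof of Theorem \ref{thm attain}'' that the paper states and omits.

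The gap is in your proof of $\la_a^R>0$. You assert that ``the weighted normalization passes to the limit by weak $L^2$ convergence in the weighted space (the weight is bounded below on $\Omega$)'' and conclude $c\neq 0$. Weak convergence only yields lower semicontinuity of the weighted norm, and the lower bound on the weight is beside the point: the weight $|x|^{-2}(\log\frac{a}{|x|})^{-2}$ is unbounded at the origin, so strong convergence in $L^2(\Omega)$ does not prevent the weighted mass from concentrating at $0$; a priori one could have $c=0$ while $\int_\Omega \frac{u_m^2}{|x|^2(\log\frac{a}{|x|})^2}\,dx = 1$ for every $m$, and with $c=0$ your contradiction via the boundary term evaporates. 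Excluding this concentration requires the gradient information through the critical Hardy inequality: apply \eqref{H^1 CH} to $u_m - c$ to get $\int_\Omega \frac{(u_m-c)^2}{|x|^2(\log\frac{a}{|x|})^2}\,dx \le (4+\ep)\|\nabla u_m\|_{L^2(\Omega)}^2 + C\|u_m-c\|_{L^2(\Omega)}^2 \to 0$, which does give $c\neq 0$ and completes your argument. (The paper reasons in the opposite order: since $u$ is constant and $\int_{\pd\Omega}\beta u^2\,dS = \lim_m \int_{\pd\Omega}\beta u_m^2\,dS = 0$ with $\beta\ge 0$, $\beta\not\equiv 0$, it must be that $u\equiv 0$; then $u_m \to 0$ strongly in $H^1(\Omega)$, and \eqref{H^1 CH} contradicts the normalization.) Either way, an appeal to \eqref{H^1 CH} is indispensable at this point, and it is missing from your positivity argument as written.
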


\begin{remark}
Let $\Omega = B_R (0)$. 
Using the test function $u \equiv 1$, we have
\begin{align*}
	\la_a^R \le \frac{\int_{\pd \Omega} \beta \,dS}{\int_{\Omega} \frac{dx}{|x|^2 (\log \frac{a}{|x|})^2} } \le \| \beta \|_{L^{\infty}(\pd\Omega)} R \log \frac{a}{R}. 
\end{align*}
Therefore, $\la^R_a < \frac{1}{4}$ if $\| \beta \|_{L^{\infty}(\pd\Omega)} R \log \frac{a}{R} < \frac{1}{4}$ holds. 
\end{remark}

\begin{proof}
First we show that $\la_a^R >0$. Obviously $\la_a^R \ge 0$. 
Assume that $\la_a^R =0$.
Then there exists a sequence $\{ u_m \} \subset H^1 (\Omega)$ such that 
\begin{align*}
	\int_{\Omega} |\nabla u_m |^2 \,dx + \int_{\pd \Omega} \beta u_m^2 \,dS \to 0 \quad (m \to \infty), \quad 
	\int_{\Omega} \frac{|u_m|^2}{|x|^2 (\log \frac{a}{|x|})^2} \,dx =1 \quad (\forall m \in \N). 
\end{align*}
Since $\beta \ge 0$, we have $\| \nabla u_m \|_{L^2(\Omega)} = o(1)$ as $m \to \infty$ and moreover $\(\frac{1}{\log a}\)^2 \int_{\Omega} u_m^2 \le \int_{\Omega} \frac{|u_m|^2}{|x|^2 (\log \frac{a}{|x|})^2} \,dx = 1$.
Thus $\{ u_m \}$ is bounded in $H^1(\Omega)$ and we may assume that $u_m \rightharpoonup u$ in $H^1 (\Omega)$ for some $u \in H^1(\Omega)$, $u_m \to u$ in $L^2(\Omega)$ and $L^2(\pd \Omega)$, 
since the embedding $H^1(\Omega) \hookrightarrow L^2(\pd\Omega)$ is compact. 
%Therefore, we see that 
%\begin{align*}
%	0\le \int_{\Omega} |\nabla u |^2 \,dx + \int_{\pd \Omega} \beta u^2 \,dS 
%	\le \liminf_{m \to \infty} \left[ \int_{\Omega} |\nabla u_m |^2 \,dx + \int_{\pd \Omega} \beta u_m^2 \,dS \right] =0
%\end{align*}
%which implies that $u = 0$.
%Since $\nabla u_m \to 0$ in $L^2(\Omega, \re^2)$ and $u_m \to 0$ in $L^2(\Omega)$, we have $u_m \to 0$ in $H^1(\Omega)$. 
%This contradicts $\int_{\Omega} \frac{|u_m|^2}{|x|^2 (\log \frac{a}{|x|})^2} \,dx =1$ and the critical Hardy inequality \eqref{H^1 CH} for $u_m \in H^1(\Omega)$. 
Since $\nabla u_m \to 0$ in $L^2(\Omega, \re^2)$, we have $\nabla u = 0$, that is $u$ is a constant.
On the other hand, $\int_{\pd\Omega} \beta u_m^2 dS = o(1)$ implies $\int_{\pd\Omega} \beta u^2 dS = 0$, which is impossible if $u$ is a non zero constant.
Thus we have $u \equiv 0$ and $u_m \to 0$ in $H^1(\Omega)$ strongly. 
However this contradicts $\int_{\Omega} \frac{|u_m|^2}{|x|^2 (\log \frac{a}{|x|})^2} \,dx =1$ and the critical Hardy inequality \eqref{H^1 CH} for $u_m \in H^1(\Omega)$. 
Hence $\la_a^R >0$. 

The attainability of $\la_a^R$ can be shown by a minor modification of the proof of Theorem \ref{thm attain}. 
We omit it here.
\end{proof}

%Now, recall the Friedrichs inequality \cite{F}: there exists $C > 0$ such that
%$$
%	\| u \|_{H^1(\Omega)} \le C\( \| u \|_{L^2(\pd\Omega)} + \| \nabla u \|_{L^2(\Omega)} \), \quad \forall u \in H^1(\Omega).
%$$

\begin{theorem}\label{thm simple}(Simplicity of $\la_a^R$)
Let $a >1,\, \beta \not\equiv 0, \,\beta \ge 0$, and $\la^R_a < \frac{1}{4}$. 
Then the first eigenvalue $\la_a^R$ is simple and the corresponding eigenfunction does not change its sign. 
%in $\Omega \setminus \{ 0\}$.
\end{theorem}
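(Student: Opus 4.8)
The plan is to follow the classical route for first eigenvalues: first produce a \emph{positive} eigenfunction, then show that \emph{every} eigenfunction of $\la_a^R$ must be of one sign, and finally deduce one-dimensionality of the eigenspace by a point-evaluation argument. Throughout I use that any nonzero eigenfunction $v\in H^1(\Omega)$ of $\la_a^R$ is automatically a minimizer of the Rayleigh quotient defining $\la_a^R$ (test the weak equation with $v$ itself), and that the weighted measure $\frac{dx}{|x|^2(\log\frac{a}{|x|})^2}$ is finite on $\Omega$ (since $a>1$), so all weighted integrals below are finite.

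\textbf{Step 1: a positive eigenfunction.} By Theorem~\ref{thm attain beta +} (using $\beta\ge0$, $\beta\not\equiv0$, $\la_a^R<\tfrac14$) the infimum $\la_a^R$ is attained by some $u\in H^1(\Omega)$. Since $|\nabla|u||=|\nabla u|$ a.e., $\beta\ge0$ on $\pd\Omega$, and the weighted $L^2$ norm is unchanged under $u\mapsto|u|$, the function $w:=|u|\ge0$ has the same Rayleigh quotient and is therefore also a minimizer. Arguing exactly as in the derivation of the Euler--Lagrange equation in the proof of Corollary~\ref{cor attain} (now with no constraint, hence simpler) shows that $w$ is a weak solution of $-\Delta w=\la_a^R\,\frac{w}{|x|^2(\log\frac{a}{|x|})^2}$ in $\Omega$ with $\frac{\pd w}{\pd\nu}+\beta w=0$ on $\pd\Omega$. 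Testing with an arbitrary $0\le\phi\in C_c^\infty(\Omega)$ and using $w\ge0$, $\la_a^R>0$, we get $\int_\Omega\nabla w\cdot\nabla\phi\,dx\ge0$, i.e. $w$ is a nonnegative superharmonic function on $\Omega$; as $w\not\equiv0$ (its weighted $L^2$ norm equals $1$), the strong minimum principle (weak Harnack inequality) gives $w>0$ in $\Omega$. The singular Hardy weight causes no difficulty here precisely because it enters with a favourable sign.

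\textbf{Step 2: every eigenfunction has constant sign.} Let $u$ be any eigenfunction of $\la_a^R$. Then $u$ is a minimizer, hence so is $|u|$, and Step~1 applied to $|u|$ gives $|u|>0$ throughout $\Omega$, so $u$ never vanishes on $\Omega$. By interior elliptic regularity (the potential $\la_a^R/(|x|^2(\log\frac{a}{|x|})^2)$ is smooth and bounded on compact subsets of $\Omega\setminus\{0\}$) we have $u\in C(\Omega\setminus\{0\})$, and $\Omega\setminus\{0\}$ is connected; hence $u$ has constant sign on $\Omega\setminus\{0\}$, and therefore on all of $\Omega$.

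\textbf{Step 3: simplicity.} Fix any point $x_0\in\Omega\setminus\{0\}$; since every eigenfunction is continuous near $x_0$, evaluation at $x_0$ is a well-defined linear functional on the eigenspace $E_{\la_a^R}$. Suppose $E_{\la_a^R}$ contained two linearly independent eigenfunctions $u_1,u_2$. By Step~1 applied to $|u_i|$ we have $u_i(x_0)\neq0$, so $w:=u_1(x_0)\,u_2-u_2(x_0)\,u_1$ is a \emph{nonzero} eigenfunction of $\la_a^R$ with $w(x_0)=0$. But $w$ is then a minimizer, so $|w|>0$ on $\Omega$ by Step~1, contradicting $w(x_0)=0$. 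Hence $\dim E_{\la_a^R}=1$, i.e. $\la_a^R$ is simple, and by Step~2 its eigenfunction does not change sign. The one point requiring care is the positivity claim in Step~1: one must justify the strong minimum principle for an $H^1$-supersolution of $-\Delta$ on all of $\Omega$ (including through the origin when $0\in\Omega$) together with the interior regularity used in Step~2 — both are standard once one notices that, because the Hardy potential multiplies a nonnegative function, $w$ is genuinely superharmonic and no borderline Hardy obstruction beyond the one already handled in obtaining a minimizer enters the argument.
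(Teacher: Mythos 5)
Your argument is correct, and it is essentially the standard proof that the paper itself omits by deferring to \cite{CPR}: take the modulus of a minimizer, derive its Euler--Lagrange equation, use that it is a nonnegative weak supersolution of $-\Delta$ to get strict positivity via the strong minimum principle, and obtain simplicity from a linear combination that vanishes at a point away from the origin. The delicate points you flag (weak Harnack for $H^1$ supersolutions across the origin, interior regularity away from $0$, and the fact that every eigenfunction is automatically a minimizer) are exactly the ones that need to be, and are, handled as you describe.
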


\begin{proof}
The proof follows from \cite{CPR}. We omit it here. 
\end{proof}

\begin{theorem}\label{thm lower upper}(Asymptotic behavior of the first eigenfunction of $\la_a^R$)
Let $a >1$, $\beta \ge 0$, $\beta \not\equiv 0$, $\la^R_a < \frac{1}{4}$, and $u_a$ be a positive minimizer of $\la_a^R$. 
Then there exist a $\delta > 0$ and positive constants $C_1, C_2$ such that 
\begin{align*}
	\begin{cases}
	C_1 \( \log \frac{a}{|x|} \)^{\frac{1}{2} - \frac{\sqrt{1- 4 \la^R_a}}{2}} \le
	&u_a (x) \le C_2 \( \log \frac{a}{|x|} \)^{\frac{1}{2} - \frac{\sqrt{1- 4 \la^R_a}}{2}}
	\quad \text{for}\,\, x \in B_\delta \setminus \{ 0\} \quad \text{if} \ 0 \in \Omega, \\
	&u_a (x) \le C_2 \( \log \frac{a}{|x|} \)^{\frac{1}{2} - \frac{\sqrt{1- 4 \la^R_a}}{2}}
	\quad \text{for}\,\, x \in B_\delta \cap \Omega \quad \text{if} \ 0 \in \pd\Omega,
	\end{cases}
\end{align*}
hold true.
\end{theorem}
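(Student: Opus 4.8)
The plan is to reduce everything to the analysis already carried out for Theorem~\ref{Thm singularity}, adding a Harnack-type argument for the new lower bound. Put $\alpha = \frac{1}{2} - \frac{\sqrt{1-4\la_a^R}}{2}$, which satisfies $2\alpha \in (0,1)$ because $0 < \la_a^R < \frac14$, and set $V_a(x) = \( \log \frac{a}{|x|} \)^{\alpha}$, $v = u_a/V_a$. Exactly as in the proof of Theorem~\ref{Thm singularity}, $V_a$ solves $-\Delta V_a = \frac{\la_a^R}{|x|^2 (\log\frac{a}{|x|})^2}V_a$, hence $v \in H^1\(\Omega, (\log\frac{a}{|x|})^{2\alpha}\,dx\)$ and ${\rm div}\( (\log\frac{a}{|x|})^{2\alpha}\nabla v\) = 0$ in $\Omega$; the Robin condition on $\pd\Omega$ is irrelevant near the origin, where one works inside $\Omega$ (if $0 \in \Omega$) or inside a half-ball $B^h_r$ (if $0 \in \pd\Omega$). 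The weighted Moser iteration of the proof of Theorem~\ref{Thm singularity}, which only uses Theorem~\ref{Thm ineq log} (resp.\ Lemma~\ref{Lemma ineq log} when $0 \in \pd\Omega$), then gives $v \in L^\infty$ near $0$, i.e.\ $u_a(x) \le C_2 \( \log\frac{a}{|x|}\)^{\alpha}$. This already settles the upper bound in both cases and hence the whole statement when $0 \in \pd\Omega$.

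For the lower bound (the case $0 \in \Omega$) I would argue as follows. Since $u_a > 0$ we have $v > 0$. Fix $\rho_0 > 0$ with $B_{2\rho_0} \subset \Omega$. The crucial point is that $w(x) = \( \log\frac{a}{|x|}\)^{2\alpha}$ is a Muckenhoupt $A_2$ weight near the origin, with $A_2$-constant bounded independently of the ball: because $t \mapsto (\log\frac{a}{t})^{2\alpha}$ is slowly varying and $|2\alpha| < 1 < 2$ keeps $w^{\pm 1}$ integrable across $0$, one checks $\( \frac{1}{|B_r(x_0)|}\int_{B_r(x_0)} w\)\(\frac{1}{|B_r(x_0)|}\int_{B_r(x_0)} w^{-1}\) \le C$ uniformly for $B_r(x_0) \subset B_{\rho_0}$. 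Consequently $v$ is, in $B_{\rho_0}$, a positive solution of the degenerate elliptic equation ${\rm div}(w\nabla v) = 0$ with $A_2$ weight, so the Fabes--Kenig--Serapioni Harnack inequality applies and yields $\sup_{B_{\rho_0/2}} v \le C \inf_{B_{\rho_0/2}} v$ with a constant depending only on the $A_2$-constant of $w$. As $v$ is positive and locally H\"older continuous on $\overline{B_{\rho_0/2}}\setminus\{0\}$ and this estimate does not degenerate at the origin, we get $v \ge C_1' > 0$ on $B_{\rho_0/2}$, hence $u_a(x) = v(x)V_a(x) \ge C_1 \( \log\frac{a}{|x|}\)^{\alpha}$ for $x \in B_{\rho_0/2}\setminus\{0\}$; taking $\delta = \rho_0/2$ completes the proof.

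If one wishes to stay strictly within the tools of this paper, the same lower bound can be produced by the negative-exponent branch of the Moser iteration: testing ${\rm div}(w\nabla v) = 0$ with $\phi = \eta^2\bar v^{\,1-2s}$, $\bar v = v + \ep$, $s > 1$, and invoking Theorem~\ref{Thm ineq log} as in \eqref{estimate2} produces a reverse iteration scheme for $\|\bar v^{-1}\|_{L^{2s_j}}$ on shrinking balls, closed by a crossover estimate $\int_{B_{\rho_0}} e^{\gamma|\log v - c|}\,d\mu \le C$ for $d\mu = |x|^{-2}(\log\frac{a}{|x|})^{-A}dx$, which follows from John--Nirenberg once one checks $\log v$ lies in $\mathrm{BMO}$ with respect to $\mu$ by testing with $\phi = \eta^2 v^{-1}$. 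Combining the two gives $\sup_{B_{\rho_0/2}} v^{-1} < \infty$, that is $\inf_{B_{\rho_0/2}} v > 0$, and the same conclusion follows.

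The main obstacle is precisely the verification that $\( \log\frac{a}{|x|}\)^{2\alpha}$ is an $A_2$ weight near the origin with a uniform constant, together with the care needed at the single point $\{0\}$, where both this weight and the Hardy potential are singular: one must ensure that the Harnack bound (equivalently, the negative-exponent iteration) does not degenerate as $x \to 0$, i.e.\ that $v$ does not tend to $0$ at the origin. Everything else is a routine repetition of the estimates in the proof of Theorem~\ref{Thm singularity}; in particular the upper bound requires no new idea, and the statement for $0 \in \pd\Omega$ asks only for that upper bound.
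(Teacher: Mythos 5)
Your upper-bound argument is exactly the paper's: the authors prove Theorem \ref{thm lower upper}'s upper estimate by repeating the weighted Moser iteration from Theorem \ref{Thm singularity} (via Theorem \ref{Thm ineq log}, resp.\ Lemma \ref{Lemma ineq log} when $0\in\pd\Omega$), which is what you do. For the lower bound, however, you take a genuinely different route: the paper simply invokes the argument of \cite{AS}, Theorem 1.5 (a barrier/comparison-type analysis for the Dirichlet problem with the same logarithmic Hardy potential), whereas you work directly with $v=u_a/V_a$, observe that $w=(\log\frac{a}{|x|})^{2\alpha}$ with $2\alpha\in(0,1)$ is an $A_2$ weight --- which is essentially the content of the paper's Lemma \ref{claim1} with $B=2\alpha$ --- and apply the Fabes--Kenig--Serapioni Harnack inequality for ${\rm div}(w\nabla v)=0$ to get $\inf v>0$ near the origin. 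This is sound and arguably more self-contained relative to the machinery already set up in \S\ref{S Sob log} (no barrier construction, uniform constants from the $A_2$ bound), at the price of importing the FKS theory; your fallback negative-exponent iteration plus John--Nirenberg is just the standard proof of that Harnack inequality and would need a weighted Poincar\'e/BMO step you only sketch. One caveat you share with the paper: the assertion that $v$ solves the weighted equation weakly across the singular point $0$ (i.e.\ that test functions whose support contains the origin are admissible) is stated in the proof of Theorem \ref{Thm singularity} ``by straightforward calculations'' and not verified; your Harnack argument leans on it in the same way, so this is consistent with, not weaker than, the paper's own treatment.
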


\begin{proof}
The upper bound follows from the same argument as in the proof of Theorem \ref{Thm singularity}. 
The lower bound in the case $0 \in \Omega$ follows from the same argument as in the proof of \cite{AS} Theorem 1.5. 
\end{proof}

%%%%%%%%%%%%%%%%%%%%%%%%%%%%%%%%%%%%%%%%%%%%%%%%%%%%%%%%%%%%%%%%%%

Next we assume that $\beta$ is a non-positive continuous function. 

%%%%%%%%%%%%%%%%%%%%%%%%%%%%%%%%%%%%%%%%%%%%%%%%%%%%%%%%%%%%%%%%%%

\begin{theorem}\label{thm attain beta -}($\la_a^R$ with a non-positive coefficient $\beta$)
Assume $0 \in \ol{\Omega}$.
Let $a >1,\, \beta \not\equiv 0$ and $\beta \le 0$. Then $-\infty < \la_a^R < 0$. 
Furthermore, $\la^R_a$ is attained. 
\end{theorem}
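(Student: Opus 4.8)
The plan is to run the direct method of the calculus of variations, in parallel with the proof of Theorem~\ref{thm attain}, the only genuinely new ingredient being the treatment of the (now possibly negative) boundary term $\int_{\pd\Omega}\beta u^2\,dS$ via a trace inequality. Throughout I will write $\w_a(x)=\frac{1}{|x|^2(\log\frac{a}{|x|})^2}$ and use, as in the proof of Theorem~\ref{thm attain}, that $\w_a\in L^1(\Omega)$ for $a>1$ and that $\w_a(x)\ge c_a:=\min\{(\log a)^{-2},e^2a^{-2}\}>0$ on $\Omega$.

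I would first settle the two-sided bound $-\infty<\la_a^R<0$. The upper bound is immediate: testing the Rayleigh quotient with the constant function $u\equiv1$ gives $\la_a^R\le(\int_{\pd\Omega}\beta\,dS)/(\int_\Omega\w_a\,dx)<0$, because $\beta\le0$, $\beta\not\equiv0$ and $\beta$ is continuous, while the denominator is a finite positive number. For the lower bound I would invoke the trace inequality on the smooth bounded domain $\Omega$: for every $\ep>0$ there is $C_\ep$ with $\int_{\pd\Omega}u^2\,dS\le\ep\int_\Omega|\nabla u|^2\,dx+C_\ep\int_\Omega u^2\,dx$ for all $u\in H^1(\Omega)$. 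Since $\beta\ge-\|\beta\|_{L^\infty(\pd\Omega)}$, choosing $\ep=(2\|\beta\|_{L^\infty(\pd\Omega)})^{-1}$ gives $\int_\Omega|\nabla u|^2\,dx+\int_{\pd\Omega}\beta u^2\,dS\ge\tfrac12\int_\Omega|\nabla u|^2\,dx-C\int_\Omega u^2\,dx\ge-Cc_a^{-1}\int_\Omega\w_a u^2\,dx$, whence $\la_a^R\ge-Cc_a^{-1}>-\infty$.

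For attainability I would take a minimizing sequence $\{u_m\}\subset H^1(\Omega)$ normalised by $\int_\Omega\w_a u_m^2\,dx=1$, so that $\int_\Omega|\nabla u_m|^2\,dx+\int_{\pd\Omega}\beta u_m^2\,dS\to\la_a^R$. The estimate from the previous paragraph, applied to $u_m$, bounds both $\int_\Omega|\nabla u_m|^2\,dx$ and $\int_\Omega u_m^2\,dx\ (\le c_a^{-1})$, so $\{u_m\}$ is bounded in $H^1(\Omega)$. Extracting a subsequence I would get $u_m\weakto u$ in $H^1(\Omega)$, $u_m\to u$ strongly in $L^2(\Omega)$ by Rellich and in $L^2(\pd\Omega)$ by compactness of the trace embedding, $u_m\to u$ a.e.\ in $\Omega$, and, since $H^1(\Omega)\hookrightarrow L^2(\Omega,\w_a\,dx)$ is a bounded embedding by Proposition~\ref{prop H^1 CH}, also $u_m\weakto u$ weakly in $L^2(\Omega,\w_a\,dx)$. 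In particular $\int_{\pd\Omega}\beta u_m^2\,dS\to\int_{\pd\Omega}\beta u^2\,dS$ (as $\beta$ is bounded) and $\int_\Omega\w_a u^2\,dx\le1$.

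The one delicate point — as in Theorem~\ref{thm attain}, this is where I expect the real work — is to rule out loss of the weighted mass to the origin, i.e.\ to show $\int_\Omega\w_a u^2\,dx=1$. I would use a Brezis--Lieb-type splitting with $w_m:=u_m-u\weakto0$: the weak convergences kill the cross terms, giving $1=\int_\Omega\w_a u^2\,dx+\mu$ with $\mu:=\lim_m\int_\Omega\w_a w_m^2\,dx\ge0$ (along a further subsequence), together with $\la_a^R=(\int_\Omega|\nabla u|^2\,dx+\int_{\pd\Omega}\beta u^2\,dS)+\lim_m\int_\Omega|\nabla w_m|^2\,dx$. Since the definition of $\la_a^R$ forces $\int_\Omega|\nabla u|^2\,dx+\int_{\pd\Omega}\beta u^2\,dS\ge\la_a^R\int_\Omega\w_a u^2\,dx=\la_a^R(1-\mu)$ (trivially if $u\equiv0$), subtracting yields $0\le\lim_m\int_\Omega|\nabla w_m|^2\,dx\le\la_a^R\mu$; because $\la_a^R<0$ this immediately forces $\mu=0$. (Note this step is actually softer than in Theorem~\ref{thm attain}, where only $\la_a<\tfrac14$ was available; here the strict negativity of $\la_a^R$ does the job at once.) Then $\int_\Omega\w_a u^2\,dx=1$ (so $u\not\equiv0$) and $\lim_m\int_\Omega|\nabla w_m|^2\,dx=0$, i.e.\ $u_m\to u$ strongly in $H^1(\Omega)$; passing to the limit gives $\int_\Omega|\nabla u|^2\,dx+\int_{\pd\Omega}\beta u^2\,dS=\la_a^R$ with $\int_\Omega\w_a u^2\,dx=1$, so $u$ attains $\la_a^R$, which would complete the proof.
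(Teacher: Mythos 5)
Your proof is correct. The two-sided bound $-\infty<\la_a^R<0$ is exactly the paper's argument: the test function $u\equiv1$ for the upper bound, and the trace inequality of \cite{AB} with $\ep\,\|\beta\|_{L^\infty(\pd\Omega)}<1$, followed by $\int_\Omega u^2\,dx\le c_a^{-1}\int_\Omega\frac{u^2}{|x|^2(\log\frac{a}{|x|})^2}\,dx$, for the lower bound. For attainability the paper only says it is ``a minor modification of the proof of Theorem \ref{thm attain}'', i.e.\ it would rerun that argument with the boundary term passed to the limit via compactness of the trace embedding, with the condition $\la_a^R<\frac{1}{4}$ (automatic here since $\la_a^R<0$) and the critical Hardy inequality \eqref{H^1 CH} used to exclude $u\equiv0$ and loss of weighted mass. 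Your route differs at exactly this step and is in fact softer: after the Brezis--Lieb-type splitting you never need the quantitative comparison with the constant $4$ in \eqref{H^1 CH}; the nonnegative lost gradient energy $L$ satisfies $L\le\la_a^R\mu$, and strict negativity of $\la_a^R$ kills the defect $\mu$ (and $L$) at once, giving strong $H^1$ convergence and $u\not\equiv0$ simultaneously. What your argument buys is independence from the sharp Hardy threshold; what the paper's deferral buys is a uniform treatment that also covers the regimes where $\la_a^R\ge0$ (Theorems \ref{thm attain beta +} and \ref{thm attain beta +-}), where the negativity shortcut is unavailable and the $\la_a^R<\frac{1}{4}$ comparison is genuinely needed. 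Both routes are valid for the present statement.
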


\begin{proof}
Obviously, we see that $\la_a^R <0$ by using the test function $u \equiv 1$. 
Recall the following trace inequality in $H^1(\Omega)$ (\cite{AB} Lemma 1): 
for any $\ep >0$ there exist a constant $C(\ep) >0$ such that 
\begin{align*}
	\int_{\pd \Omega} u^2 \, dS \le \ep \int_{\Omega} |\nabla u|^2 \,dx + C(\ep) \int_{\Omega} u^2 \,dx
\end{align*}
for any $u \in H^1(\Omega)$. 
If we choose $\ep >0$ so that $\ep \,\| \beta \|_{L^\infty(\pd\Omega)} < 1$, 
then the above inequality implies
\begin{align*}
	&\int_{\Omega} |\nabla u|^2 \,dx + \int_{\pd \Omega} \beta u^2 \, dS\\
	&\ge (1- \ep \,\| \beta \|_\infty )\int_{\Omega} |\nabla u|^2 \,dx + \| \beta \|_\infty \( \ep \int_{\Omega} |\nabla u|^2 \,dx - \int_{\pd \Omega}  u^2 \, dS \) \\
	&\ge - \| \beta \|_\infty C(\ep)  \int_{\Omega}  u^2 \, dx
	\ge - \| \beta \|_\infty C \int_{\Omega} \frac{|u|^2}{|x|^2 (\log \frac{a}{|x|})^2} \,dx
\end{align*}
which implies that $\la_a^R$ is bounded from below. 
The attainability of $\la_a^R$ can be shown by a minor modification of the proof of Theorem \ref{thm attain}. %and the proof of \cite{CPR} Theorem 6.4.
\end{proof}

%%%%%%%%%%%%%%%%%%%%%%%%%%%%%%%%%%%%%%%%%%%%%%%%%%%%%%%%%%%%%%%%%%%%%%%%%%%%
Finally, we assume that $\beta$ is a sign-changing continuous function. 
%%%%%%%%%%%%%%%%%%%%%%%%%%%%%%%%%%%%%%%%%%%%%%%%%%%%%%%%%%%%%%%%%%%%%%%%%%%%
For $\beta$, set $\beta^{\pm} =$max$\{ \,\pm \beta, \,0\}$.
Then $\beta = \beta^+ - \beta^-$.

\begin{theorem}\label{thm attain beta +-}($\la_a^R$ with a sign-changing coefficient $\beta$)
Assume $0 \in \ol{\Omega}$.
Let $a >1,\, \beta^+ \not\equiv 0$ and $\beta^- \not\equiv 0$. 
Then $\la_a^R $ is bounded from below. 
Especially, in the case where $\int_{\pd \Omega} \beta \, dS \le 0$, $\la^R_a < 0$ is attained. 
On the other hand, in the case where $\int_{\pd \Omega} \beta \,dS > 0$, $\la^R_a $ is attained if $\la_a^R < \frac{1}{4}$.
\end{theorem}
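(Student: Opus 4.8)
The plan is to combine the two mechanisms already used in the paper: the trace inequality of \cite{AB} Lemma~1 (as in the proof of Theorem~\ref{thm attain beta -}) to tame the sign-indefinite boundary term, and the concentration analysis at the origin from the proof of Theorem~\ref{thm attain}. First I would prove that $\la_a^R$ is bounded from below. Writing $\beta=\beta^+-\beta^-$ and using \cite{AB} Lemma~1, choose $\ep>0$ with $\ep\|\beta^-\|_{L^\infty(\pd\Omega)}<1$; then for every $u\in H^1(\Omega)$
\begin{align*}
	\int_\Omega|\nabla u|^2\,dx+\int_{\pd\Omega}\beta u^2\,dS
	&\ge\(1-\ep\|\beta^-\|_\infty\)\int_\Omega|\nabla u|^2\,dx-C(\ep)\|\beta^-\|_\infty\int_\Omega u^2\,dx\\
	&\ge -C\int_\Omega\frac{|u|^2}{|x|^2\(\log\frac{a}{|x|}\)^2}\,dx,
\end{align*}
the last step because the Hardy weight is bounded below by a positive constant on $\Omega$ (as noted in the proof of Theorem~\ref{thm attain}). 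Hence $\la_a^R\ge-C>-\infty$, and testing with $u\equiv1$ shows $\la_a^R<\infty$.

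Next I would settle the sign of $\la_a^R$ when $\int_{\pd\Omega}\beta\,dS\le0$. If $\int_{\pd\Omega}\beta\,dS<0$, the test function $u\equiv1$ already gives $\la_a^R\le\int_{\pd\Omega}\beta\,dS\big/\int_\Omega\frac{dx}{|x|^2(\log\frac{a}{|x|})^2}<0$. If $\int_{\pd\Omega}\beta\,dS=0$, pick $\phi\in H^1(\Omega)$ supported near a boundary point where $\beta<0$ (such a point exists since $\beta^-\not\equiv0$ and $\beta$ is continuous) with $\int_{\pd\Omega}\beta\phi\,dS<0$; the Rayleigh quotient at $u=1+t\phi$ equals $\bigl(2t\int_{\pd\Omega}\beta\phi\,dS+O(t^2)\bigr)\big/\int_\Omega\frac{|1+t\phi|^2}{|x|^2(\log\frac a{|x|})^2}\,dx$, whose numerator is negative for small $t>0$ while the denominator stays positive. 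So $\la_a^R<0$ in either sub-case.

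For attainment it suffices to show: \emph{if $\la_a^R<\tfrac14$, then $\la_a^R$ is attained}. This covers the case $\int_{\pd\Omega}\beta\,dS\le0$ (where we have just shown $\la_a^R<0<\tfrac14$) and the case $\int_{\pd\Omega}\beta\,dS>0$ by hypothesis. Take a minimizing sequence $\{u_m\}$ with $\int_\Omega\frac{|u_m|^2}{|x|^2(\log\frac a{|x|})^2}\,dx=1$. The trace inequality again lets me absorb $\int_{\pd\Omega}\beta u_m^2\,dS$, and since $\int_\Omega u_m^2\,dx$ is then controlled by the normalization, $\{u_m\}$ is bounded in $H^1(\Omega)$. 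Passing to a subsequence, $u_m\weakto u$ in $H^1(\Omega)$, $u_m\to u$ in $L^2(\Omega)$ and in $L^2(\pd\Omega)$ (compactness of the trace embedding), $u_m\to u$ a.e., and $u_m\weakto u$ in $L^2\bigl(\Omega,\frac{dx}{|x|^2(\log\frac a{|x|})^2}\bigr)$. Because $\beta$ is bounded and $u_m\to u$ in $L^2(\pd\Omega)$, the boundary term converges, $\int_{\pd\Omega}\beta u_m^2\,dS\to\int_{\pd\Omega}\beta u^2\,dS$; it is therefore ``compact'' and does not interfere near the origin. From here the proof of Theorem~\ref{thm attain} applies with only the boundary term carried along: using \eqref{H^1 CH} one first excludes $u\equiv0$ (otherwise $1\le(4+\ep)\la_a^R$, impossible) and then excludes $\theta:=\int_\Omega\frac{|u|^2}{|x|^2(\log\frac a{|x|})^2}\,dx<1$ (otherwise $1-\theta\le(4+\ep)\la_a^R(1-\theta)$, impossible), so $\theta=1$ and $u$ realizes $\la_a^R$.

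The main obstacle is the sign-indefinite boundary integral $\int_{\pd\Omega}\beta u^2\,dS$: one must keep the minimizing sequence bounded in $H^1(\Omega)$ even though this term may be strongly negative, and one must pass it to the limit without disturbing the delicate ``no mass concentrates at $0$'' argument. The first is resolved by \cite{AB} Lemma~1 together with the positive lower bound on the Hardy weight, the second by the compact embedding $H^1(\Omega)\hookrightarrow L^2(\pd\Omega)$; once these are in place, the sub-critical threshold $\la_a^R<\tfrac14$ plays exactly the role it played in Theorem~\ref{thm attain}.
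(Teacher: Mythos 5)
Your proposal is correct and follows essentially the same route as the paper: the trace inequality of \cite{AB} plus the positive lower bound on the Hardy weight for boundedness from below and for the $H^1$-boundedness of a minimizing sequence, a (perturbed) constant test function to get $\la_a^R<0$ when $\int_{\pd\Omega}\beta\,dS\le 0$ (your $u=1+t\phi$ with $t$ small is, after rescaling, the same family as the paper's $u=v+t$ with $t$ large), and the compact trace embedding together with the concentration argument of Theorem \ref{thm attain} for attainment under $\la_a^R<\frac14$. Your write-up in fact supplies the details the paper leaves as a ``minor modification,'' and they check out.
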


\begin{proof}
In the case where $\int_{\pd \Omega} \beta \, dS < 0$, we easily see that $\la_a^R < 0$ by using the test function $u \equiv 1$. 
If $\int_{\pd \Omega} \beta \, dS  =0$, then we test $\la_a^R$ with a function $u(x) = v(x) + t$, 
where $t >0$ is a constant 
and $v \in C_c^1( \overline{\Omega})$ with $v \ge 0$ ${\rm supp } v \,\cap {\rm supp }\beta^- =\emptyset$, and $\int_{\pd \Omega} \beta^- v \,dS > 0$. 
Then we have
\begin{align*}
	\int_{\Omega} |\nabla u |^2 \,dx + \int_{\pd \Omega} \beta u^2 \,dS
	= \int_{\Omega} |\nabla v |^2 \,dx - \int_{\pd \Omega} \beta^- v^2  \,dS - 2t \int_{\pd \Omega} \beta^- v  \,dS < 0
\end{align*}
for sufficiently large $t >0$. 
Therefore, we see that $\la_a^R < 0$. 
The boundedness of $\la_a^R$ from below follows from the proof of Theorem \ref{thm attain beta -}. 
The attainability of $\la_a^R$ can be shown by a minor modification of the proof of Theorem \ref{thm attain}.
\end{proof}

%%%%%%%%%%%%%%%%%%%%%%%%%%%%%%%%%%%%%%%%%%%%%%%%%%%%%%%%%%%%%%%%%%%%%%%%%%%%%%%%%%%%%%%%%%%%%%%%%%%%%%%%%%%%%%%%%%%
%
% \S6 Sobolev type inequalities with logarithmic weights
%
%%%%%%%%%%%%%%%%%%%%%%%%%%%%%%%%%%%%%%%%%%%%%%%%%%%%%%%%%%%%%%%%%%%%%%%%%%%%%%%%%%%%%%%%%%%%%%%%%%%%%%%%%%%%%%%%%%%
\section{Sobolev type inequalities with logarithmic weights}\label{S Sob log}

In this section, we prove Theorem \ref{Thm ineq log}. 

Our basic tools are the generalized rearrangement of functions by Talenti \cite{Ta1, Ta2} and the weighted nonlinear potential theory by D. R. Adams \cite{Adams}.
Basically, we follow the arguments by Horiuchi and Kumlin \cite{HK} to establish Theorem \ref{Thm ineq log}.
In \cite{HK}, Caffarelli-Kohn-Nirenberg inequalities with critical or supercritical power type weights are studied.

In the case where $p=2$ or $B=0$, the optimal constant and the attainability of the inequality (\ref{ineq log}) are studied by \cite{HK,S(JDE),RS,N,S(MJM)}. 
Therefore, we shall show the inequality (\ref{ineq log}) in the case where $B \not= 0$ and $p >2$.  

For Trudinger-Moser inequalities with logarithmic weights, see \cite{CR,R}. 
%
% How about Morrey's inequality with logarithmic weights ?
%

Recall the following Theorem.%two Theorems.
 
%\begin{ThmA}(\cite{Bradley, Muckenhoupt} )
%Let $1 < \rho \le \sigma < \infty$ and let $U$ and $V$ be measurable weights. Then there exists a constant $C>0$ such that the inequality
%\begin{align}\label{G HS}
%	\( \int_0^\infty \left| U(r) \int_t^{\infty} | \psi (s)| ds \right|^{\sigma} dr \)^{\frac{1}{\sigma}} \le C \( \int_0^{\infty} |V(r) \psi (r) |^{\rho} dr \)^{\frac{1}{\rho}}
%\end{align}
%holds for all measurable functions $\psi$ such that the integral on the right- hand side of (\ref{G HS}) is finite if and only if 
%\begin{align*}
%	\sup_{t>0} \( \int_0^t |U(r)|^{\sigma} dr \)^{\frac{1}{\sigma}} \( \int_t^\infty |V(r)|^{-\rho^{'}} dr \)^{\frac{1}{\rho^{'}}} < + \infty.
%\end{align*} 
%\end{ThmA}

\begin{ThmA}(\cite{Adams} Theorem 7.1)
Let $p>2$. Assume that $\w$ belongs to Muckenhoupt $A_2$-class, $g \in L^1_{\rm loc} (\re^2)$ and $g \ge 0$ a.e. on $\re^2$. 
Then the following two assertions are equivalent to each other:
\begin{align*}
	&(1) \sup_{x \in \re^2, \,r >0} \( \int_{B_r (x)} g(x) \,dx \) J[\w](x,r)^{\frac{p}{2}} < \infty, \\
	&(2) \,\text{There exists a constant} \,C >0\, \text{such that for any}\, f \in L^2 (\re^2  ; \,\w \,dx), \\
	&\hspace{2em}\| I_1 * f \|_{L^p (\re^2; \,g(x)\,dx)} \le C \| f \|_{L^2 (\re^2  ; \,\w \,dx)}.
\end{align*}
Here, $I_s (x) = \dfrac{\Gamma \(  \frac{2-s}{2} \)}{2^s \pi \,\Gamma \(  \frac{s}{2} \)} |x|^{-(2-s)}$ is the Riesz potential for $s \in (0,2)$ in $\re^2$ and 
\begin{align*}
J[\w] (x,r) = \int_r^\infty \frac{1}{\pi t^2} \( \int_{B_t (x)} \frac{dy}{\w (y)} \) \, \frac{dt}{t}
\end{align*}
\end{ThmA}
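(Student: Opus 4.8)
The plan is to recast the equivalence through the self-adjointness of $I_1$ and then treat the two implications separately. Writing $\sigma = \w^{-1}$ (which again lies in the $A_2$-class, since $A_2$ is stable under $\w \mapsto \w^{-1}$) and abbreviating $g(E) = \int_E g\,dx$, $\sigma(E) = \int_E \sigma\,dx$, I would first observe that, dualizing $L^p(\re^2;\,g\,dx)$ and using both $\int (I_1 * f)\,h\,g\,dx = \int f\,\bigl(I_1 * (hg)\bigr)\,dx$ and the Cauchy--Schwarz inequality in $L^2(\re^2;\,\w\,dx)$ (with equality attained at $f \propto \w^{-1} I_1 * (hg)$), assertion $(2)$ is \emph{equivalent} to the dual trace inequality
\begin{equation}\label{dualineq}
	\bigl\| I_1 * (hg) \bigr\|_{L^2(\re^2;\,\sigma\,dx)} \le C \, \| h \|_{L^{p'}(\re^2;\,g\,dx)}, \qquad p' = \tfrac{p}{p-1},
\end{equation}
holding for all $h \ge 0$. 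Since $p > 2$ gives $p' < 2$, \eqref{dualineq} is an off-diagonal trace inequality of the favorable type, with target exponent $2$ exceeding the source exponent $p'$, which is precisely where Adams' machinery is effective; both implications of Theorem A may then be read off from \eqref{dualineq}.

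For the necessity $(2) \Rightarrow (1)$, I would simply test \eqref{dualineq} with $h = \chi_{B_r(x_0)}$, whose right-hand side equals $C\,g(B_r(x_0))^{1/p'}$. For the left-hand side I would use the pointwise lower bound $I_1 * (g\chi_{B_r(x_0)})(y) \ge c\,g(B_r(x_0)) / |y - x_0|$ valid for $|y - x_0| \ge r$, giving
\begin{equation*}
	\bigl\| I_1 * (g\chi_{B_r(x_0)}) \bigr\|_{L^2(\sigma)}^2 \ge c\,g(B_r(x_0))^2 \int_{|y - x_0| \ge r} \frac{\sigma(y)}{|y - x_0|^2}\,dy.
\end{equation*}
A dyadic decomposition of the exterior of $B_r(x_0)$, combined with the reverse-doubling property of $\sigma$ (a consequence of the $A_2$ condition on the connected space $\re^2$), shows that this tail integral is comparable to $J[\w](x_0,r)$. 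Inserting this into \eqref{dualineq} and solving for $g(B_r(x_0))$ yields $g(B_r(x_0))^{2/p}\,J[\w](x_0,r) \le C^2$, which upon raising to the power $p/2$ is exactly the supremum bound $(1)$.

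For the sufficiency $(1) \Rightarrow (2)$ I would establish \eqref{dualineq} by a Hedberg-type pointwise estimate. Setting $d\mu = hg\,dx$, I would split $I_1 * \mu(x)$ at an arbitrary radius $\delta$, bounding the local part by a fractional maximal function of $\mu$ and the tail by the testing quantity $J[\w]$ via $(1)$, and then optimize in $\delta$ to obtain a pointwise inequality of Hedberg type with exponents governed by $p$ and $p'$. Integrating this bound in $L^2(\sigma)$, I would invoke the $A_2$-boundedness of the relevant weighted maximal operator to control the maximal part, while the strict inequality $p > 2$ supplies the summability that closes the off-diagonal estimate. The main obstacle is precisely this last step: the information in $(1)$ is organized scale by scale through $J[\w]$, and turning it into a single global bound requires discretizing $J[\w](x,r)$ into dyadic averages of $\sigma$ (legitimate by the doubling of $\sigma$) and then summing the dyadic contributions, a summation in which the strict gap $p > 2$ provides the geometric room that is unavailable at the critical exponent $p = 2$.
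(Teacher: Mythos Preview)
The paper does not prove Theorem~A at all: it is quoted verbatim as Theorem~7.1 of Adams~\cite{Adams} and used as a black box in the proof of Theorem~\ref{Thm ineq log}. There is therefore no ``paper's own proof'' to compare your proposal against; the authors' only contribution in connection with Theorem~A is the verification, for the specific weights \eqref{weight g}--\eqref{weight omega}, of the Muckenhoupt $A_2$ hypothesis (Lemma~\ref{claim1}) and the testing condition~(1) (Lemma~\ref{claim2}).

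That said, your sketch is a faithful outline of the argument in Adams' original paper: dualize to the trace inequality $\|I_1*(hg)\|_{L^2(\sigma)}\lesssim\|h\|_{L^{p'}(g)}$, test with $h=\chi_{B_r(x_0)}$ for the necessity, and use a Hedberg splitting plus the weighted maximal theorem for sufficiency. One point to be careful with: in your necessity step you claim the tail integral $\int_{|y-x_0|\ge r}|y-x_0|^{-2}\sigma(y)\,dy$ is comparable to $J[\w](x_0,r)$; in fact only the inequality $\gtrsim$ is needed (and is what a dyadic decomposition gives directly), and that suffices for $(2)\Rightarrow(1)$. The reverse comparability you invoke is not needed and would require the full doubling of $\sigma$, which you do have from $A_2$, but it is cleaner not to claim two-sided comparability here.
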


%%%%%%%%%%%%%%%%%%%%%%%%%%%%%%%%%%%%%%%%%%%%%%%%%%%%%%%%%%%%%%%%%%%%%%%%%%%%%%%%%%%%%%%%%%%%%%%%%%%%%%%%%%%%%%%%%%%
%
%	Proof of Theorem \ref{Thm ineq log})
%
%%%%%%%%%%%%%%%%%%%%%%%%%%%%%%%%%%%%%%%%%%%%%%%%%%%%%%%%%%%%%%%%%%%%%%%%%%%%%%%%%%%%%%%%%%%%%%%%%%%%%%%%%%%%%%%%%%%
%\begin{proof}{\it (Theorem \ref{Thm ineq log})}
\vspace{1em}\noindent
{\it Proof of Theorem \ref{Thm ineq log}}:
Let $B \not= 0$ and $p >2$. We divide the proof into two cases with respect to the range of $B$. 

%%%%%%%%%%%%%%%%%%%%%%%%%%%%%%%%%%%%%%%%%%%%%%%%%%%%%%%%%%%%%%%%%%%%%%%%%%%%%%%%%%%%%%%%%%%%%%%%%%%%%%%%%%%%%%%%%%%
%
%	$B < 0$ case: Application of generalized rearrangement theory
%
%%%%%%%%%%%%%%%%%%%%%%%%%%%%%%%%%%%%%%%%%%%%%%%%%%%%%%%%%%%%%%%%%%%%%%%%%%%%%%%%%%%%%%%%%%%%%%%%%%%%%%%%%%%%%%%%%%%
\noindent
{\bf (I)}\hspace{1em} Let $B < 0$. 
In this case, we can apply a theory of generalized rearrangement of functions (\cite{Ta1,Ta2}). 
Set 
\begin{align*}
f(x) = 
\begin{cases}
	\( \log \frac{a}{|x|} \)^{-B} \quad &\text{if}\,\, |x| \le 1,\\
	\( \log a \)^{-B}   &\text{if}\,\, |x| \ge 1.
\end{cases}
\end{align*}
Since $f \in L^1_{\rm loc} (\re^2) \cap C(\re^2 \setminus \{ 0\})$ is radial, non-increasing with respect to $r = |x|$, and $f \ge 0$ on $\re^2 \setminus \{ 0\}$, $f$ is admissible. Thanks to zero extension, it is enough to show the inequality \eqref{ineq log} for $u \in C_c^\infty (B_1)$. 
Now, we define the rearrangement function $\mathcal{R}_f [u]$ of $u$ with respect to $f$ as follows:
For $x \in \re^2 \setminus \{ 0\}$, 
\begin{align*}
	\mathcal{R}_f [u] (x) &= \mathcal{R}_f [u] (|x|) =\sup \{ \,t \ge 0 \,|\,\, \mu_f [u] (t) > \mu_f \( B_{|x|}  \) \, \},\\
	\mu_f (A) &= \int_A f (x) \,dx, \\
	\mu_f [u] (t) &= \mu_f \( \,\{ \, |u| >t \, \}  \,\) = 
	\int_{\{ \,|u| > t \, \}} f(x) \,dx.
\end{align*}
In the case where $f \equiv 1$, $\mathcal{R}_f [u]$ coincides with the well-known rearrangement $u^{\#}$. 
Given $A, B$, we fix $\tilde{a} > 1$ such that $\frac{1}{|x|^2 \( \log \frac{\tilde{a}}{|x|} \)^{A-B}}$ is decreasing with respect to $r= |x|$.
Then we have
\begin{align*}
	\int_{B_1} \frac{|u|^p}{ |x|^2 \( \log \frac{a}{|x|} \)^A} \,dx 
	&\le C \int_{B_1} \frac{|u|^p}{ |x|^2 \( \log \frac{\tilde{a}}{|x|} \)^{A-B}} f(x) \,dx \\
	&\le C \int_{B_1} \frac{|\, \mathcal{R}_f [u] \,|^p}{ |x|^2 \( \log \frac{\tilde{a}}{|x|} \)^{A-B}} f(x) \,dx 
	\le C \int_{B_1} \frac{|\, \mathcal{R}_f [u] \,|^p}{ |x|^2 \( \log \frac{a}{|x|} \)^A} \,dx,
\end{align*}
where %$R>0$ satisfies $\mu_f (B_R) = \mu_f (\Omega)$ and 
the second inequality comes from the Hardy-Littlewood type inequality, see e.g. \cite{HK} Proposition 4.4.
On the other hand, the P\'olya-Szeg\"o type inequality, see e.g. \cite{HK}  Proposition 4.5, implies 
\begin{align*}
	\int_{B_1} \( \log \frac{a}{|x|} \)^{B} |\nabla u|^2 \,dx 
	&= \int_{B_1} \frac{ |\nabla u|^2}{f(x)} \,dx \\
	&\ge \int_{B_1} \frac{ |\nabla \mathcal{R}_f [u] \,|^2}{f(x)} \,dx 
	= \int_{B_1} \( \log \frac{a}{|x|} \)^{B} |\nabla \mathcal{R}_f [u] \,|^2 \,dx.
\end{align*}
Therefore it is enough to show the inequality (\ref{ineq log}) only for radial functions.  
Here, we recall the Hardy type inequality with logarithmic weigts (see e.g. \cite{N,RS} or  \cite{S(MJM)} Corollary 1.3): 
\begin{align}\label{Hardy with log}
\( \frac{1-B}{2} \)^2 \int_{B_1} \frac{|u|^2}{|x|^2 \( \log \frac{a}{|x|} \)^{2-B} }\,dx 
\le \int_{B_1} \frac{|\nabla u|^2}{ \( \log \frac{a}{|x|} \)^{-B} }\,dx\quad (u \in C_c^\infty (B_1))
\end{align}
and the radial lemma with logarithmic weights (see e.g. \cite{CR} lemma 5):
\begin{align}\label{radial lemma with log}
|u(x)| \le \frac{C}{\sqrt{1-B}} \,\( \int_{B_1} |\nabla u|^2 \( \log \frac{a}{|x|} \)^B \,dx \)^{\frac{1}{2}} \( \log \frac{a}{|x|} \)^{\frac{1-B}{2}}\,\, (u \in C_{c, {\rm rad}}^\infty (B_1))
\end{align}
By using (\ref{Hardy with log}) and (\ref{radial lemma with log}), we have
\begin{align*}
\int_{B_1} \frac{|u|^p}{|x|^2 (\log \frac{a}{|x|})^A} \,dx 
&\le C \int_{B_1} \frac{|u|^p}{|x|^2 (\log \frac{a}{|x|})^{1+ \frac{p}{2}(1-B) }} \,dx\\
&\le C \int_{B_1} \frac{|u|^{p-2} }{\(\log \frac{a}{|x|} \)^{\frac{(p-2)(1-B)}{2}}  } \,\frac{|u|^2}{|x|^2 (\log \frac{a}{|x|})^{2-B} }\,dx\\
&\le C \( \int_{B_1} \( \log \frac{a }{|x|} \)^{B} |\nabla u|^2 \,dx \)^{\frac{p-2}{2}} \int_{B_1} \frac{|u|^2}{|x|^2 (\log \frac{a}{|x|})^{2-B} }\,dx\\
&\le C \( \int_{B_1} \( \log \frac{a }{|x|} \)^{B} |\nabla u|^2 \,dx \)^{\frac{p}{2}}
\end{align*}

%We apply Theorem A with 
%$u=u(r)=\int_r^{\infty} |\psi (s)| ds$, $\psi (s) \equiv 0$ for $s \in [R, \infty)$, 
%$\sigma =p$, $\rho =2$, $\gamma >1$, and 
%\begin{align*}
%	U(r)^p = \begin{cases}
%	r^{-1} \( \log \frac{a}{r} \)^{-A} \quad &\text{if} \,\, r \in (0,1),\\
%	0 &\text{if} \,\,r \in [1, \infty),
%	\end{cases}
%	\quad V(r)^2 = \begin{cases}
%	r \( \log \frac{a}{r} \)^{B} \quad &\text{if} \,\, r \in (0,1),\\
%	r^\gamma \( \log a \)^{B} &\text{if} \,\,r \in [1, \infty).
%	\end{cases}
%\end{align*}
%Note that for $t \in [0,1]$,
%\begin{align*}
%	&\( \int_0^t |U(r)|^{p} dr \)^{\frac{1}{p}} \( \int_t^\infty |V(r)|^{-2} dr \)^{\frac{1}{2}} \\
%	&= \( \int_0^t r^{-1} \( \log \frac{a}{r} \)^{-A} dr \)^{\frac{1}{p}} \( \int_t^1 r^{-1} \( \log \frac{a}{r} \)^{-B} dr + \int_1^\infty r^{-\gamma} \( \log a \)^{-B} \,dr \)^{\frac{1}{2}}\\
%	&\le  (A-1)^{-\frac{1}{p}} \( \log \frac{a}{t} \)^{\frac{1-A}{p}} \( (1-B)^{-1} \( \log \frac{a}{t} \)^{1-B} + (1-\gamma)^{-1} \( \log a \)^{-B}  \)^{\frac{1}{2}} \\
%	&\le C \( \log \frac{a}{t} \)^{\frac{1}{p} \( 1+ \frac{p}{2} (1-B) -A \)} < \infty
%\end{align*} 
%when $A \ge 1+ \frac{p}{2} (1-B)$. For $t \in (1, \infty)$,
%\begin{align*}
%	&\( \int_0^t |U(r)|^{p} dr \)^{\frac{1}{p}} \( \int_t^\infty |V(r)|^{-2} dr \)^{\frac{1}{2}} \\
%	&= \( \int_0^1 r^{-1} \( \log \frac{a}{r} \)^{-A} dr \)^{\frac{1}{p}} \( \int_t^\infty r^{-\gamma} \( \log a \)^{-B} \,dr \)^{\frac{1}{2}}\\
%	&\le  (A-1)^{-\frac{1}{p}} \( \log a \)^{\frac{1-A}{p}} \( (\gamma -1)^{-1} \( \log a \)^{-B} t^{1-\gamma} \)^{\frac{1}{2}}  < \infty
%\end{align*} 
%since $\gamma > 1$.
Therefore, the inequality \eqref{ineq log} holds for any radial functions.

%%%%%%%%%%%%%%%%%%%%%%%%%%%%%%%%%%%%%%%%%%%%%%%%%%%%%%%%%%%%%%%%%%%%%%%%%%%%%%%%%%%%%%%%%%%%%%%%%%%%%%%%%%%%%%%%%%%
%
%	$0 < B < 1$ case: Application of nonlinear potential theory
%
%%%%%%%%%%%%%%%%%%%%%%%%%%%%%%%%%%%%%%%%%%%%%%%%%%%%%%%%%%%%%%%%%%%%%%%%%%%%%%%%%%%%%%%%%%%%%%%%%%%%%%%%%%%%%%%%%%%
\noindent
{\bf (II)}\hspace{1em} Let $0< B <1$.
In this case, we apply the weighted nonlinear potential theory by D. R. Adams to prove Theorem \ref{Thm ineq log}.
In order to do so, we need to choose appropriate weights to which Theorem A is applicable:
Set 
\begin{align}
\label{weight g}
	g(x) = \begin{cases}
	|x|^{-2} \( \log \frac{a}{|x|} \)^{-A}  &\text{if} \,\, x \in B_1,\\
	0 &\text{if} \,\, x \in \re^2 \setminus B_1,
	\end{cases}
\end{align}
and
\begin{align}
\label{weight omega}
	\w(x)  = \begin{cases}
	\( \log \frac{a}{|x|} \)^{B}  &\text{if} \,\, x \in B_1,\\
	|x|^\gamma \( \log a \)^{B} &\text{if} \,\,x \in \re^2 \setminus B_1,
	\end{cases}
\end{align}
where $0< \gamma <2$.
Note that $g \in L^1_{loc}(\re^2)$ and $\w \in C(\re^2 \setminus \{ 0\})$. 
Furthermore, we obtain the followings.
%%%%%%%%%%%%%%%%%%%%%%%%%%%%%%%%%%%%%%%%%%%%%%%%%%%%%%%%%%%%%%%%%%%%%%%%%%%%%%%%%%%%%%%%%%%%%%%%%%%%%%%%%%%%%%%%%%%
%
%	Key lemmas Lemma claim1, claim2 
%
%%%%%%%%%%%%%%%%%%%%%%%%%%%%%%%%%%%%%%%%%%%%%%%%%%%%%%%%%%%%%%%%%%%%%%%%%%%%%%%%%%%%%%%%%%%%%%%%%%%%%%%%%%%%%%%%%%%
\begin{lemma}\label{claim1}
$\w$ belongs to Muckenhoupt $A_2$-class.
\end{lemma}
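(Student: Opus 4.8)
The plan is to verify directly the Muckenhoupt $A_2$ condition
\begin{align*}
	[\w]_{A_2} := \sup_B \( \frac{1}{|B|}\int_B \w\,dx \)\( \frac{1}{|B|}\int_B \w^{-1}\,dx \) < \infty,
\end{align*}
where the supremum runs over all balls $B = B_r(x_0) \subset \re^2$, by splitting the balls into two families. If $r \ge |x_0|/2$, then $B_r(x_0) \subset B_{3r}(0)$, and since $|B_{3r}(0)| = 9\,|B_r(x_0)|$ the $A_2$ quantity of $B_r(x_0)$ is at most $81$ times that of the concentric ball $B_{3r}(0)$. Hence it suffices to bound the $A_2$ quantity over (a) balls $B_r(x_0)$ with $r < |x_0|/2$, and (b) balls $B_R(0)$ centered at the origin.

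For family (a) I would show that $\w$ has bounded oscillation on $B_r(x_0)$, that is $\sup_{B_r(x_0)} \w \le C_0 \inf_{B_r(x_0)} \w$ with $C_0 = C_0(a,B,\gamma)$, which immediately gives that the $A_2$ quantity of $B_r(x_0)$ is at most $C_0$. Here one uses $B_r(x_0) \subset \{\,|x_0|/2 < |x| < 3|x_0|/2\,\}$ and distinguishes three regimes: if $|x_0| \ge 2$ the weight equals $|x|^\gamma(\log a)^B$ on the ball and oscillates by at most $3^\gamma$; if $|x_0| \le 2/3$ the weight equals $(\log\frac{a}{|x|})^B$ on the ball, where the argument $\log\frac{a}{|x|}$ stays between $\log\frac{2a}{3|x_0|} \ge \log a$ and $\log\frac{2a}{|x_0|}$, so the slow variation of the logarithm bounds the oscillation by $(1 + \frac{\log 3}{\log a})^B$; and if $2/3 \le |x_0| \le 2$ the ball lies in the fixed compact annulus $\{1/3 \le |x| \le 3\}$, on which $\w$ is continuous and strictly positive.

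For family (b) I would compute $\int_{B_R(0)}\w\,dx$ and $\int_{B_R(0)}\w^{-1}\,dx$ in polar coordinates, treating $R \le 1$ and $R > 1$ separately. For $R \le 1$ the point is that $\int_0^R r(\log\frac{a}{r})^{\pm B}\,dr$ is comparable to $R^2(\log\frac{a}{R})^{\pm B}$: the lower bound follows by restricting the integral to $r \in [R/2,R]$, and the upper bound in the case of the exponent $+B \in (0,1)$ follows after the scaling $r = Rs$ from the subadditivity $(\log\frac{a}{R} + \log\frac1s)^B \le (\log\frac{a}{R})^B + (\log\frac1s)^B$ together with $\log\frac{a}{R} \ge \log a > 0$, the remaining $\int_0^1 s(\log\frac1s)^B\,ds < \infty$ being absorbed. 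Consequently the $A_2$ quantity over $B_R(0)$ is comparable to $R^{-4}\cdot R^2(\log\frac{a}{R})^B \cdot R^2(\log\frac{a}{R})^{-B} = 1$. For $R > 1$ one has $\int_{B_R(0)}\w = \int_{B_1}\w + 2\pi(\log a)^B\int_1^R r^{1+\gamma}\,dr$, comparable to $(\log a)^B R^{2+\gamma}$, and $\int_{B_R(0)}\w^{-1} = \int_{B_1}\w^{-1} + 2\pi(\log a)^{-B}\int_1^R r^{1-\gamma}\,dr$, comparable to $(\log a)^{-B}R^{2-\gamma}$, where $0 < \gamma < 2$ guarantees the correct power growth in both integrals; the $A_2$ quantity is then comparable to $R^{-4}\cdot(\log a)^B R^{2+\gamma}\cdot(\log a)^{-B}R^{2-\gamma} = 1$. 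Combining the bounds for families (a) and (b) yields $[\w]_{A_2} < \infty$.

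The main obstacle is the behavior near the origin, where $\w(x) \to +\infty$ and the two defining pieces of $\w$, together with the transition at $|x| = 1$, must be handled coherently. This is resolved by exploiting that the factor $(\log\frac{a}{|x|})^{\pm B}$ is slowly varying — hence essentially constant on any ball staying away from $0$ — and by the observation that in the concentric-ball computation the logarithmic factors appearing in $\int_{B_R}\w$ and $\int_{B_R}\w^{-1}$ are reciprocal and cancel, leaving a scale-invariant quantity. One should also check throughout that every comparison constant depends only on $a$, $B$, $\gamma$ and not on the ball, which is manifest from the estimates above.
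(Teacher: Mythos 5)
Your proof is correct, but it takes a genuinely different route from the paper. You use the standard reduction of the $A_2$ condition to two families of balls: off-center balls with $r<|x_0|/2$, where you show $\sup_{B}\w\le C_0\inf_{B}\w$ by exploiting that $|x|^\gamma$ and the slowly varying factor $\bigl(\log\tfrac{a}{|x|}\bigr)^{B}$ oscillate only by a bounded factor on $\{|x_0|/2<|x|<3|x_0|/2\}$, and concentric balls $B_R(0)$, to which every ball with $r\ge|x_0|/2$ is reduced at the cost of the fixed factor $81$ via $B_r(x_0)\subset B_{3r}(0)$. The paper instead verifies the $A_2$ condition by a direct case analysis over the relative sizes of $|x|$, $r$, $1-|x|$ and a large parameter $M>\frac1{a-1}$ (cases (I), (II-a)--(II-d), (III-a)--(III-d)), each case being handled with the computational Lemma \ref{claim3} or pointwise comparisons of the weight on the relevant ball. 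The concentric-ball estimate you need, $\int_0^R r\bigl(\log\tfrac aR+\log\tfrac1s\bigr)^{\pm B}R\,ds$-type bounds, is essentially the paper's Lemma \ref{claim3}; you reprove its $+B$ case by scaling plus subadditivity of $t\mapsto t^{B}$ and absorption using $\log\tfrac aR\ge\log a>0$, while the $-B$ case (which you leave implicit) is immediate from monotonicity — the paper proves both cases by comparing with the decreasing function $t^{\beta}e^{-2t}$. Your route buys brevity and transparency: the bounded-oscillation observation replaces eight subcases at once, and all constants visibly depend only on $a,B,\gamma$. The paper's more granular decomposition has the side benefit that the same splitting pattern and Lemma \ref{claim3} are reused almost verbatim in the proof of Lemma \ref{claim2}, where a genuine case analysis in $(x,r)$ cannot be avoided because the quantity $T(x,r)J(x,r)^{p/2}$ is not reducible to concentric balls. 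Two small remarks: the lower bounds you sketch for the concentric integrals are not actually needed (only upper bounds enter the $A_2$ product), and you should state explicitly the trivial monotonicity bound $\int_{B_R(0)}\w^{-1}\,dx\le\pi R^2\bigl(\log\tfrac aR\bigr)^{-B}$ for $R\le1$, since the crude bound $(\log a)^{-B}R^2$ would not suffice against the factor $\bigl(\log\tfrac aR\bigr)^{B}$ as $R\to0$.
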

\begin{lemma}\label{claim2}
$\sup_{x \in \re^2, r >0} \( \int_{B_r (x)} g(x) \,dx \) J[\w](x,r)^{\frac{p}{2}} < \infty$.
\end{lemma}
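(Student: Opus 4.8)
The plan is to control the two factors
\begin{align*}
	\mathcal G(x,r):=\int_{B_r(x)}g(y)\,dy, \qquad
	J[\w](x,r)=\int_r^\infty\frac{1}{\pi t^3}\(\int_{B_t(x)}\frac{dy}{\w(y)}\)dt,
\end{align*}
separately and then combine them. Since ${\rm supp}\,g\subset\ol{B_1}$, $\mathcal G(x,r)=0$ unless $B_r(x)\cap B_1\neq\emptyset$, so only $(x,r)$ with $|x|<1+r$ matter. First I would record the radial building blocks: with $\tau=\log\frac{a}{s}$, for $0<s\le1$,
\begin{align*}
	\int_{B_s}g\,dy=\frac{2\pi}{A-1}\(\log\frac{a}{s}\)^{1-A},\qquad
	\int_{B_s}\frac{dy}{\w}\le\pi s^2\(\log\frac{a}{s}\)^{-B},
\end{align*}
the first because $A>1$ (which follows from $A\ge1+\frac{p}{2}(1-B)$) and the second because $\(\log\frac{a}{|y|}\)^{-B}$ is increasing on $B_1$ (here $B>0$); while for $s\ge1$, $\int_{B_s}g\,dy=\frac{2\pi}{A-1}(\log a)^{1-A}$ and $\int_{B_s}\frac{dy}{\w}\le C_0+C_1s^{2-\gamma}$, where $0<\gamma<2$ is exactly what makes these finite and, later, makes $J$ converge at infinity. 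Since $r\mapsto J[\w](x,r)$ is nonincreasing and everything is finite and continuous once $r$ is bounded away from $0$, it suffices to estimate the product for small $r$; I would then split into two geometric regimes according to whether or not $B_r(x)$ is close to the origin.

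In the first regime, $|x|\le2r$, the ball sits near the origin and $B_t(x)\subset B_{3t}(0)$ for all $t\ge r$, so $\mathcal G(x,r)\le\int_{B_{3r}}g\,dy$, and splitting $\int_r^\infty=\int_r^{1/3}+\int_{1/3}^\infty$ and inserting the radial bounds gives, for $0<r<\frac{1}{3}$,
\begin{align*}
	\mathcal G(x,r)\le\frac{2\pi}{A-1}\(\log\frac{a}{3r}\)^{1-A},\qquad
	J[\w](x,r)\le C\(\log\frac{a}{3r}\)^{1-B}+C.
\end{align*}
Hence $\mathcal G(x,r)\,J[\w](x,r)^{p/2}\le C\(\log\frac{a}{3r}\)^{1-A+\frac{p}{2}(1-B)}+C\(\log\frac{a}{3r}\)^{1-A}$, and since $\log\frac{a}{3r}\to\infty$ as $r\to0$ this stays bounded exactly when $1-A+\frac{p}{2}(1-B)\le0$ and $1-A<0$, i.e.\ exactly under the hypothesis $A\ge1+\frac{p}{2}(1-B)$. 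For $r\ge\frac{1}{3}$ (still with $|x|\le2r$), $\mathcal G(x,r)\le\|g\|_{L^1}$ and $J[\w](x,r)$ is bounded by a constant independent of $x$, so the product is controlled there as well.

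In the second regime, $|x|>2r$, the ball is bounded away from the origin; the support condition then forces $|x|<1+r<1+\frac{|x|}{2}$, so $|x|<2$ and $r<1$. On $B_r(x)$ one has $|y|\ge|x|/2$, and using $|y|\le|x|+r$ with $\log(1+t)\le t$ one obtains the refined estimate $g(y)\le C|x|^{-2}\(1+\log\frac{a}{|x|}\)^{-A}$ on $B_r(x)\cap B_1$, whence $\mathcal G(x,r)\le C\frac{r^2}{|x|^2}\(1+\log\frac{a}{|x|}\)^{-A}$; it is essential here to keep the logarithmic factor of $g$, as the cruder bound $g\le(\log a)^{-A}|y|^{-2}$ does \emph{not} suffice when $B_r(x)$ shrinks to the origin off-centre. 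For $J$ I would split $\int_r^\infty=\int_r^{|x|/2}+\int_{|x|/2}^\infty$: on $[r,|x|/2]$ one has $B_t(x)\subset\{|y|>|x|/2\}$, where $1/\w\le(\log a)^{-B}$, giving $\int_r^{|x|/2}\le(\log a)^{-B}\log\frac{|x|}{2r}$, and on $[|x|/2,\infty)$ one has $B_t(x)\subset B_{3t}(0)$, so the radial bounds contribute $\le C\(1+\log\frac{a}{|x|}\)^{1-B}+C$. Writing $u=\log\frac{|x|}{2r}\ge0$ (so $\frac{r^2}{|x|^2}=\frac{1}{4}e^{-2u}$) and $v=\log\frac{a}{|x|}$ (so $1+v$ is bounded below by a positive constant), we arrive at
\begin{align*}
	\mathcal G(x,r)\,J[\w](x,r)^{p/2}\le C\,e^{-2u}(1+v)^{-A}\(u+(1+v)^{1-B}\)^{p/2},
\end{align*}
which is bounded over the admissible $(u,v)$ because $\sup_{u\ge0}e^{-2u}u^{p/2}<\infty$ and $A\ge1+\frac{p}{2}(1-B)>\frac{p}{2}(1-B)$.

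I expect the main obstacle to be not any single estimate but the bookkeeping: one must go through all the geometric sub-cases (position of $x$ relative to $0$ and to the unit circle, size of $r$) while keeping every constant uniform in $(x,r)$, and in particular notice that near the origin the factor $\(\log\frac{a}{|y|}\)^{-A}$ in $g$ cannot be discarded. Conceptually everything collapses to the scalar inequality $A-1-\frac{p}{2}(1-B)\ge0$, which is precisely the hypothesis of Theorem \ref{Thm ineq log} and, as the discussion following that theorem shows, sharp.
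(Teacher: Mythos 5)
Your argument is correct, and its skeleton is the same as the paper's: a direct verification of the Adams condition, with the $t$-integral in $J[\w]$ split at $t=|x|/2$, the radial monotonicity of the logarithmic weights, and the hypothesis $A\ge 1+\frac{p}{2}(1-B)$ entering at exactly the same two spots (the concentric regime as $r\to 0$, giving the exponent $1-A+\frac{p}{2}(1-B)\le 0$, and the off-centre regime, where only $A\ge\frac{p}{2}(1-B)$ plus $e^{-2u}u^{p/2}\le C$ is needed). Where you differ is the bookkeeping: the paper runs through the cases (I-a)--(I-d) and (II), distinguishing $|x|<\frac23$, $\frac23<|x|<1$, $|x|\ge 1$ and $r\ge\frac12$, introduces an auxiliary $\tilde a$ to make $|y|^{-2}(\log\frac{\tilde a}{|y|})^{-A}$ radially decreasing, and keeps the sharper factor $(\log\frac{2a}{3|x|})^{-B}$ in $J_1$; you compress all of this into the two regimes $|x|\le 2r$ and $|x|>2r$, using the cruder but sufficient global bound $1/\w\le(\log a)^{-B}$ for the inner part of $J$, a uniform pointwise bound $g(y)\le C|x|^{-2}\bigl(1+\log\frac{a}{|x|}\bigr)^{-A}$ on $B_r(x)\cap B_1$, and the $(u,v)$-substitution at the end. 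This buys a shorter proof with lossier constants, which is harmless here, and it correctly isolates the one genuinely delicate point (one must not discard the logarithmic factor of $g$ in the off-centre regime, which the paper handles via $\tilde a$). Two small things to tighten when writing it up: the "refined estimate" should be justified by $|y|\le\min\{1,\tfrac32|x|\}$, which gives $\log\frac{a}{|y|}\ge\max\{\log a,\ \log\frac{a}{|x|}-\log\frac32\}\ge c(a)\bigl(1+\log\frac{a}{|x|}\bigr)$ (the invoked inequality $\log(1+t)\le t$ is not really what is used), and in the first regime the sub-case $r\ge\frac13$ deserves the explicit one-line remark that $|x|\le 2r\le 2t$ gives $B_t(x)\subset B_{3t}(0)$, so $J[\w](x,r)\le\int_{1/3}^{\infty}\frac{1}{\pi t^{3}}\bigl(C_0+C_1(3t)^{2-\gamma}\bigr)\,dt<\infty$ uniformly in $x$; with these spelled out, the preliminary remark that "it suffices to consider small $r$" can simply be dropped, since your two regimes already cover all $(x,r)$ with nonvanishing product.
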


For the moment, we assume the validity of Lemma \ref{claim1}, \ref{claim2}.
Then from Theorem A, Lemma \ref{claim1} and Lemma \ref{claim2},  
we see that the inequality
\begin{align}\label{Adams cond}
	\| \,I_1 * f \,\|_{L^p (\re^2 ; \,g  \,dx)} \le  C \| f \|_{L^2 (\re^2; \, \w \,dx)}
\end{align}
holds for any $f \in L^2 (\re^2  ; \w \,dx)$ and $p > 2$ where $I_1(x) = \frac{1}{2\pi} |x|^{-1}$. 
From the Sobolev's integral representation:
\begin{align*}
	u(x)= \frac{1}{2\pi} \int_{\re^2} \frac{\nabla u(y) \cdot (x-y)}{|x-y|^2} \,dy \quad \text{for}\,\, x \in \re^2,
\end{align*}
we have 
\begin{align*}
	|u(x) | \le  [I_1 * |\nabla u| ] (x) \quad \text{for}\,\, x \in \re^2.
\end{align*}
Combining this with (\ref{Adams cond}) for $f = |\nabla u|$ implies 
\begin{align*}
	 \( \int_{\Omega} \frac{|u|^p}{|x|^2 (\log \frac{a}{|x|})^A} \,dx \)^{\frac{2}{p}}  
	&= \| u \|^2_{L^p (\Omega ; g (x) \,dx)} \\
	&\le  \| \,I_1 * |\nabla u| \,\|^2_{L^p (\re^2 ; g (x) \,dx)} \\
	&\le  C \| \nabla u \|^2_{L^2 (\re^2; \w (x) \,dx)}
	= C \int_\Omega \( \log \frac{a }{|x|} \)^{B} |\nabla u|^2 \,dx.
\end{align*}
This proves Theorem \ref{Thm ineq log}.
%\end{proof}
\qed

From now on, we will prove Lemma \ref{claim1} and Lemma \ref{claim2}.
We start by showing the useful computational lemma.

%%%%%%%%%%%%%%%%%%%%%%%%%%%%%%%%%%%%%%%%%%%%%%%%%%%%%%%%%%%%%%%%%%%%%%%%%%%%%%%%%%%%%%%%%%%%%%%%%%%%%%%%%%%%%%%%%%%
%
%	Lemma claim3
%
%%%%%%%%%%%%%%%%%%%%%%%%%%%%%%%%%%%%%%%%%%%%%%%%%%%%%%%%%%%%%%%%%%%%%%%%%%%%%%%%%%%%%%%%%%%%%%%%%%%%%%%%%%%%%%%%%%%
\begin{lemma}\label{claim3}
Let $-1 < \alpha < 1$, $a>1$, and $R > 0$ such that $\frac{a}{R} > e$. 
Then there exists a positive constant $C$ such that
\begin{align*}
	\int_{B_R(0)} \( \log \frac{a}{|y|} \)^\alpha \,dy \le C R^2 \( \log \frac{a}{R} \)^\alpha.
\end{align*}
\end{lemma}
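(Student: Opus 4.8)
The plan is to reduce the integral to a one-dimensional integral by polar coordinates and then estimate by a change of variables. First I would write
\[
	\int_{B_R(0)} \( \log \frac{a}{|y|} \)^\alpha \,dy = 2\pi \int_0^R \( \log \frac{a}{r} \)^\alpha r\,dr,
\]
and substitute $t = \log \frac{a}{r}$, i.e. $r = a e^{-t}$, $dr = -a e^{-t}\,dt$, so that $r\,dr = a^2 e^{-2t}\,dt$ and the integral becomes
\[
	2\pi a^2 \int_{\log(a/R)}^{\infty} t^\alpha e^{-2t}\,dt.
\]
Since $\frac{a}{R} > e$, the lower limit $T := \log(a/R)$ satisfies $T > 1$.

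The key estimate is then $\int_T^{\infty} t^\alpha e^{-2t}\,dt \le C\, T^\alpha e^{-2T}$ for $T > 1$, uniformly in $\alpha \in (-1,1)$. For $\alpha \le 0$ this is immediate since $t^\alpha \le T^\alpha$ on $[T,\infty)$, giving the bound with $C = \tfrac12$. For $0 < \alpha < 1$ I would either integrate by parts once, or more simply split $[T,\infty) = [T,2T] \cup [2T,\infty)$: on $[T,2T]$ we have $t^\alpha \le (2T)^\alpha \le 2 T^\alpha$ (using $\alpha < 1$), and on $[2T,\infty)$ one can absorb the polynomial factor into the exponential, e.g. $t^\alpha \le t \le e^{t} \le e^{t/2}\cdot e^{-T}$ for $t \ge 2T$ since $e^{t/2} \ge e^{T} \ge t$ when $T\ge 1$ — hence $\int_{2T}^\infty t^\alpha e^{-2t}\,dt \le e^{-T}\int_{2T}^\infty e^{-3t/2}\,dt \le \tfrac23 e^{-T}e^{-3T} \le T^\alpha e^{-2T}$ for $T$ large; for the remaining bounded range of $T$ one just uses that the integral is finite and $T^\alpha e^{-2T}$ is bounded below. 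Combining, $\int_T^\infty t^\alpha e^{-2t}\,dt \le C T^\alpha e^{-2T}$ with $C$ independent of $\alpha$ and $T$.

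Plugging this back gives
\[
	\int_{B_R(0)} \( \log \frac{a}{|y|} \)^\alpha \,dy \le 2\pi a^2 \cdot C\, T^\alpha e^{-2T} = 2\pi C \, R^2 \( \log \frac{a}{R} \)^\alpha,
\]
since $a^2 e^{-2T} = a^2 (R/a)^2 = R^2$, which is the claimed inequality. I expect no real obstacle here; the only mild care needed is to make the constant $C$ uniform in $\alpha$ over the compact-in-the-interior range relevant to the application (the endpoints $\alpha = \pm 1$ are excluded, which is exactly what keeps the estimate uniform — at $\alpha = -1$ the integral near $r = 0$, i.e. $t = \infty$... actually the divergence would occur at the upper limit in $t$ only if $\alpha \le -1$ combined with the $e^{-2t}$ decay still controls it, so the real role of $\alpha > -1$ is harmlessly to keep things clean, while $\alpha < 1$ is what is genuinely used in the split above).
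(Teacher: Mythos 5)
Your plan is correct and follows the same skeleton as the paper's proof: polar coordinates, the substitution $t=\log\frac{a}{r}$ reducing everything to the estimate $\int_T^\infty t^\alpha e^{-2t}\,dt\le C\,T^\alpha e^{-2T}$ with $T=\log\frac{a}{R}>1$, and, for $\alpha\le 0$, exactly the paper's monotonicity bound $t^\alpha\le T^\alpha$. The only point of divergence is the case $0<\alpha<1$: the paper compares the integrand with $t^{\beta}e^{-2t}$, $\beta=2\log\frac{a}{R}$, which is decreasing on $(T,\infty)$, computes the remaining integral of $t^{\alpha-\beta}$ exactly, and uses $\frac{x}{2x-(\alpha+1)}<\frac{1}{1-\alpha}$, ending with the constant $\frac{2\pi}{1-\alpha}$; you instead split at $2T$, which is equally elementary and in fact yields a constant uniform in $\alpha\in(-1,1)$ (irrelevant for the application, where $\alpha=\pm B$ is fixed, but slightly cleaner). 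One caveat: your displayed chain ``$t^\alpha\le t\le e^{t}\le e^{t/2}e^{-T}$'' and the justification ``$e^{t/2}\ge e^{T}\ge t$'' are not literally true (the middle inequality is backwards, and $e^{T}\ge t$ fails for large $t$). The correct pointwise statement you need on $[2T,\infty)$ is $\alpha\log t\le \frac{t}{2}-T$, which does hold for $T\ge 1$ and $0<\alpha<1$: it holds at $t=2T$ because $\log(2T)\le T$, and the function $\frac{t}{2}-T-\alpha\log t$ is nondecreasing for $t\ge 2$. With that repair (or with your integration-by-parts alternative, or the crude bound $\int_{2T}^{\infty}t^{\alpha}e^{-2t}dt\le\int_{2T}^{\infty}te^{-2t}dt=(T+\tfrac14)e^{-4T}\le T^{\alpha}e^{-2T}$ for $T\ge1$), the argument is complete, so this is a fixable slip rather than a genuine gap.
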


\begin{proof}
By a change of variables, we have
\begin{align*}
	&\int_{B_R(0)} \( \log \frac{a}{|y|} \)^\alpha \,dy = 2\pi \int_0^R  \( \log \frac{a}{s} \)^\alpha s \,ds  = 2\pi a^2 \int_{\log \frac{a}{R}}^{\infty} t^{\alpha} e^{-2t} dt.
\end{align*}
First, we consider the case where $\alpha \in (-1, 0]$. 
Since the function $g(t) = t^{\alpha}$ is monotone decreasing in $\( \log \frac{a}{R}, \infty \)$, we have
\begin{align*}
	2\pi a^2 \int_{\log \frac{a}{R}}^{\infty} t^{\alpha} e^{-2t} dt 
	&\le 2\pi a^2 \( \log \frac{a}{R} \)^{\alpha} \int_{\log \frac{a}{R}}^{\infty} e^{-2t} dt \\
	&= 2\pi a^2 \( \log \frac{a}{R} \)^{\alpha} \left[ \frac{e^{-2t}}{-2} \right]_{\log \frac{a}{R}}^{\infty} = \pi R^2 \( \log \frac{a}{R} \)^{\alpha}.
\end{align*}
Next, we consider the case where $\alpha \in (0, 1)$. Set $\beta = 2\log \frac{a}{R} > 2$. 
Since the function $g(t) = t^{\beta} e^{-2t}$ is monotone decreasing in $\( \log \frac{a}{R}, \infty \)$, we have
\begin{align*}
	2\pi a^2 \int_{\log \frac{a}{R}}^{\infty} t^{\alpha-\beta} \cdot t^{\beta} e^{-2t} dt 
	&\le 2\pi a^2 \( \log \frac{a}{R} \)^{\beta} e^{-\beta} \int_{\log \frac{a}{R}}^{\infty} t^{\alpha-\beta} dt \\
	&= 2\pi a^2 \( \log \frac{a}{R} \)^{\beta} \( \frac{a}{R} \)^{-2} \left[ \frac{t^{\alpha-\beta+1}}{\alpha-\beta+1} \right]_{\log \frac{a}{R}}^{\infty}\\
	&= 2\pi R^2 \( \log \frac{a}{R} \)^{\beta} \frac{\( \log \frac{a}{R} \)^{\alpha-\beta+1}}{\beta-(\alpha +1)} \\
	&= \frac{2\pi}{2\log \frac{a}{R}-(\alpha+1)} R^2 \( \log \frac{a}{R} \)^{\alpha+1} 
	\le \frac{2\pi}{1-\alpha} R^2 \( \log \frac{a}{R} \)^{\alpha},
\end{align*}
where we have used $\alpha - \beta + 1 < 0$ and the last inequality comes from $f(x) = \frac{x}{2x - (\alpha+1)} < \frac{1}{1-\alpha}$ for any $\alpha < 1 < x$. 
\end{proof}

%%%%%%%%%%%%%%%%%%%%%%%%%%%%%%%%%%%%%%%%%%%%%%%%%%%%%%%%%%%%%%%%%%%%%%%%%%%%%%%%%%%%%%%%%%%%%%%%%%%%%%%%%%%%%%%%%%%
%
%	Proof of Lemma claim1: \omega \in A_2
%
%%%%%%%%%%%%%%%%%%%%%%%%%%%%%%%%%%%%%%%%%%%%%%%%%%%%%%%%%%%%%%%%%%%%%%%%%%%%%%%%%%%%%%%%%%%%%%%%%%%%%%%%%%%%%%%%%%%
%\begin{proof}{\it (Lemma \ref{claim1})}:
\vspace{1em}\noindent
{\it Proof of Lemma \ref{claim1}}:
To show that $\w$ in \eqref{weight omega} belongs to Muckenhoupt $A_2$-class, we show
\begin{align}\label{Muckenhoupt}
	\sup_{x \in \re^2, r>0} S(x,r) = \sup_{x \in \re^2, r>0} \frac{1}{\pi^2 r^4} \( \int_{B_r(x)} \w (y) \,dy \) \( \int_{B_r(x)} \w (y)^{-1} \,dy \) < \infty.
\end{align}
Note that we are in the case $0 < B < 1$.
According to the value of $|x|$, we divide the proof into three parts.

\noindent
{\bf (I)} The case where $x=0$.

First, we assume that $r \le 1$. From Lemma \ref{claim3}, we have
\begin{align*}
	&S(0,r) = \frac{1}{\pi^2 r^4} \( \int_{B_r(0)} \( \log \frac{a}{|y|} \)^B \,dy \) \( \int_{B_r(0)} \( \log \frac{a}{|y|} \)^{-B} \,dy \) \\
	&\le \frac{1}{\pi^2 r^4} \( C R^2 \( \log \frac{a}{R} \)^B \)\( C R^2 \( \log \frac{a}{R} \)^{-B} \) \le C.
\end{align*} 
Next, we assume that $r >1$. Then we have
\begin{align*}
	S(0,r) &= \frac{1}{\pi^2 r^4} \( \int_{B_1(0)} \( \log \frac{a}{|y|} \)^B \,dy + \int_{B_r(0) \setminus B_1(0)} \( \log a \)^B |y|^\gamma \,dy \) \\
	&\times \( \int_{B_1(0)} \( \log \frac{a}{|y|} \)^{-B} \,dy + \int_{B_r(0) \setminus B_1(0)} \( \log a \)^{-B} |y|^{-\gamma} \,dy \) \\
	&\le \frac{C}{r^4} \( 1+ r^{2+ \gamma} \) \( 1+ r^{2- \gamma} \)
	\le \frac{C}{r^{2-\gamma}} + C < \infty
\end{align*}
since $0 < \gamma < 2$.

\noindent
{\bf (II)} The case where $0 < |x| \le 1$. 

Let $M>0$ be a large constant such that $M > \frac{1}{a-1}$. 
We divide the prof into four parts.

\noindent
{\bf (II-a)} 
The case where $0 < r \le \min \left\{ \frac{|x|}{M}, \, 1-|x| \right\}$.

Then we see that $y \in B_r (x) \subset B_{|x| +r}(0) \setminus B_{|x| -r} (0) \subset B_1$. 
Therefore, we have
\begin{align*}
	S(x,r) 
	&\le \frac{1}{\pi^2 r^4} \( \int_{B_r(x)} \( \log \frac{a}{|x|-r} \)^B \,dy \) \( \int_{B_r(x)} \( \log \frac{a}{|x|+r} \)^{-B} \,dy \) \\
	&= \( \frac{\log \frac{a}{|x|} + \log \frac{|x|}{|x|-r} }{\log \frac{a}{|x|} + \log \frac{|x|}{|x|+r} } \)^B 
	\le \( \frac{\log \frac{a}{|x|} + \log \frac{M}{M-1} }{\log \frac{a}{|x|} + \log \frac{M}{M+1} } \)^B < \infty
\end{align*} 
for any $x \in B_1 (0) \setminus \{ 0\}$. 
Here we have used $\frac{|x|}{|x|-r} \le \frac{M}{M-1}$ and $\frac{|x|}{|x|+r} \ge \frac{M}{M+1}$ for $r \le \frac{|x|}{M}$.

\noindent
{\bf (II-b)} 
The case where $1-|x| \le  r \le \frac{|x|}{M}$.

In this case, since $0 < |x| \le 1$, we have $0 \le r \le \frac{1}{M}$ and $\frac{M}{M+1} \le |x| \le 1$.
%Then we see that $y \in B_r (x) \subset B_{|x| +r}(0) \setminus B_{|x| -r} (0) \subset B_{\frac{M+1}{M}}(0) \setminus B_{\frac{M-1}{M+1}} (0)$. 
Then we see that $y \in B_r (x) \subset B_{|x| +r}(0) \setminus B_{|x| -r}(0)$ and
$|x| + r \le \frac{M+1}{M}$, $|x| - r \ge \frac{M-1}{M+1}$.
Therefore, we have
\begin{align*}
	S(x,r) &\le \frac{1}{\pi^2 r^4} \( \int_{B_r (x) \cap B_1(0)} \( \log \frac{a}{|x|-r} \)^B \,dy + \int_{B_r(x) \cap B_1(0)^c} \( \log a \)^B (|x| +r)^\gamma \,dy \) \\
	&\times \( \int_{B_r (x) \cap B_1(0)} \( \log \frac{a}{|x| + r} \)^{-B} \,dy + \int_{B_r(x) \cap B_1(0)^c} \( \log a \)^{-B}  \,dy \) \\
	&\le C \(  \( \log \frac{M+1}{M-1} a \)^B   + (\log a)^B \( \frac{M+1}{M} \)^\gamma \) \(  \( \log \frac{M}{M+1} a \)^{-B}  + (\log a)^{-B} \) 
	< \infty.
\end{align*}

\noindent
{\bf (II-c)} The case where $\frac{|x|}{M} \le r \le 1-|x|$.

In this case, $B_r(x) \subset B_{|x|+r}(0) \subset B_1(0)$. From Lemma \ref{claim3}, we have
\begin{align*}
	&S(x,r) \le \frac{1}{\pi^2 r^4} \( \int_{B_{|x|+r}(0)} \( \log \frac{a}{|y|} \)^B \,dy \) \( \int_{B_{|x|+r}(0)} \( \log \frac{a}{|y|} \)^{-B} \,dy \) \\ 
	&\le \frac{C}{r^4} (|x| + r)^2 \( \log \frac{a}{|x|+r} \)^B \cdot (|x| + r)^2 \( \log \frac{a}{|x|+r} \)^{-B} \\
	&\le C \( \frac{|x| + r}{r} \)^4 \le C \( \frac{Mr + r}{r} \)^4 < \infty.
\end{align*}

\noindent
{\bf (II-d)} The case where $r \ge \max \left\{ \frac{|x|}{M}, \, 1-|x| \right\}$.

Then we see that $r \ge \frac{1}{M+1}$ is away from $0$ and $|x| + r \le (M+1) r$. 
Therefore, by Lemma \ref{claim3}, we have
\begin{align*}
	S(x,r) &\le \frac{1}{\pi^2 r^4} \( \int_{B_1(0)} \( \log \frac{a}{|y|} \)^B \,dy + \int_{B_{|x|+r}(0) \setminus B_1(0)} \( \log a \)^B |y|^\gamma \,dy \) \\
	&\times \( \int_{B_1(0)} \( \log \frac{a}{|y|} \)^{-B} \,dy + \int_{B_{|x|+r}(0) \setminus B_1(0)} \( \log a \)^{-B} |y|^{-\gamma} \,dy \) \\
	&\le \frac{C}{r^4} \( (\log a)^B + (\log a)^B \int_0^{(M+1)r} s^{\gamma + 1} ds \) \cdot \( (\log a)^{-B} + (\log a)^{-B} \int_0^{(M+1)r} s^{-\gamma + 1} ds \) \\
	&\le \frac{C}{r^4} \( 1 + r^{\gamma +2}  \) \( 1 + r^{2- \gamma}  \) < \infty
\end{align*}
since $0 < \gamma < 2$.

\noindent
{\bf (III)} The case where $|x| > 1$.

Again, we divide the proof into four cases.

\noindent
{\bf (III-a)} The case where $0 < r \le \min \left\{ \frac{|x|}{M}, \, |x|-1 \right\}$.

Then we see that $B_r (x) \cap B_1 = \emptyset$. 
Since $B_{|x|-r}(0) \subset B_r(x) \subset B_{|x|+r}(0)$, we have
\begin{align*}
	S(x,r) 
	&= \frac{1}{\pi^2 r^4} \( \int_{B_r(x)} \( \log a \)^B |y|^\gamma \,dy \) \( \int_{B_r(x)} \( \log a \)^{-B} |y|^{-\gamma} \,dy \) \\
	&\le \( \frac{|x| +r}{|x|-r} \)^\gamma  
	= \( 1+ \frac{2}{\frac{|x|}{r} -1} \)^\gamma
	\le \( 1+ \frac{2}{M-1} \)^\gamma < \infty.
\end{align*} 

\noindent
{\bf (III-b)} The case where $|x|-1 \le  r \le \frac{|x|}{M}$. 

The proof is the same as it in the case (II-b). We omit the proof. 

%Then we see that $\frac{M-1}{M} \le |x| -r \le |x| +r \le \frac{M+1}{M-1}$.  Therefore, we have
%\begin{align*}
%S(x,r) &\le \frac{1}{\pi^2 r^4} \( \int_{B_r (x) \cap B_1(0)} \( \log \frac{a}{|x|-r} \)^B \,dy + \int_{B_r(x) \cap B_1(0)^c} \( \log a \)^B (|x| +r)^\gamma \,dy \) \\
%&\times \( \int_{B_r (x) \cap B_1(0)} \( \log a \)^{-B} \,dy + \int_{B_r(x) \cap B_1(0)^c} \( \log a \)^{-B}  \,dy \) \\
%&\le C \(  \( \log \frac{M+1}{M-1} a \)^B  + (\log a)^B \( \frac{M+1}{M} \)^\gamma \) \le C.
%\end{align*}

\noindent
{\bf (III-c)} The case where $\frac{|x|}{M} \le  r \le |x|-1$.

Then we see that $B_r(x) \cap B_1 = \emptyset$. Therefore, we have
\begin{align*}
S(x,r) \le \frac{1}{\pi^2 r^4} \( \int_{B_{|x|+r}(0)} |y|^\gamma \,dy \) \( \int_{B_{|x|+r}(0)} |y|^{-\gamma} \,dy \) 
\le C \( \frac{r+|x|}{r} \)^4 = C \( 1+ \frac{1}{M} \) < \infty.
\end{align*}

\noindent
{\bf (III-d)} The case where $r \ge \max \left\{ \frac{|x|}{M}, \, |x|-1 \right\}$.

The proof is the same as it in the case (II-d). We omit the proof. 

%Then we see that $r \ge \frac{1}{M+1}$ and $|x| + r \le (M+1) r$. Therefore, we have
%\begin{align*}
%S(x,r) &\le \frac{1}{\pi^2 r^4} \( \int_{B_1(0)} \( \log \frac{a}{|y|} \)^B \,dy + \int_{B_{|x|+r}(0) \setminus B_1(0)} \( \log a \)^B |y|^\gamma \,dy \) \\
%&\times \( \int_{B_1(0)} \( \log \frac{a}{|y|} \)^{-B} \,dy + \int_{B_{|x|+r}(0) \setminus B_1(0)} \( \log a \)^{-B} |y|^{-\gamma} \,dy \) \\
%&\le \frac{C}{r^4} \( 1 + r^{\gamma +2}  \) \( 1 + r^{2- \gamma}  \) \le C.
%\end{align*}

Finally, we obtain (\ref{Muckenhoupt}), which ends the proof of Lemma \ref{claim1}. 
\qed
%\end{proof}

%%%%%%%%%%%%%%%%%%%%%%%%%%%%%%%%%%%%%%%%%%%%%%%%%%%%%%%%%%%%%%%%%%%%%%%%%%%%%%%%%%%%%%%%%%%%%%%%%%%%%%%%%%%%%%%%%%%%%%%%%%%%%
%
% This remark is wrong. \w is not a super harmonic on $\re^2$ (there is no superharmonic function on $^re^2$.
%
%\begin{remark}
%Actually, $\w \in A_p$ for any $p \ge 1$ when $0 \le B < 1$. In fact, we see that 
%\begin{align*}
%- \lap \w(x)  = \begin{cases}
%B (B-1) |x|^{-2} \( \log \frac{a}{|x|} \)^{B-2}  &\text{if} \,\, x \in B_1,\\
%\gamma (\gamma -2) \( \log a \)^{B} |x|^{\gamma -2}  &\text{if} \,\,x \in \re^2 \setminus B_1.
%\end{cases}
%\end{align*}
%Therefore, $-\lap \w \ge 0$ in $\re^2$ in the weak sense. From the weak Harnack inequality:
%\begin{align*}
%\sup_{B} \frac{1}{|B|} \int_B u \,dx  \le C \text{ess} \inf_B u \quad {}^{\forall}B \subset \re^2
%\end{align*}
%for superharmonic function $u$ (\cite{HKM} Theorem 3.59), we have $\w \in A_1$. 
%Since $A_1 \subset A_q \subset A_p$ for any $q \in [1, p]$ (\cite{HKM} p.298), we see that $\w \in A_p$ for any $p \ge 1$. 
%\end{remark}
%%%%%%%%%%%%%%%%%%%%%%%%%%%%%%%%%%%%%%%%%%%%%%%%%%%%%%%%%%%%%%%%%%%%%%%%%%%%%%%%%%%%%%%%%%%%%%%%%%%%%%%%%%%%%%%%%%%%%%%%%%%%%

%%%%%%%%%%%%%%%%%%%%%%%%%%%

%\begin{proof}{\it (Lemma \ref{claim2})}
\vspace{1em}\noindent
{\it Proof of Lemma \ref{claim2}}:
Set 
\begin{align*}
	T(x, r) &= \int_{B_r (x) \cap B_1 (0)} g (y) \,dy = \int_{B_r (x) \cap B_1 (0)} \frac{dy}{|y|^2 \( \log \frac{a}{|y|} \)^A},\\
	J(x, r) &= J[\w](x,r) = \int_r^\infty  \frac{1}{\pi t^2} \( \int_{B_t (x)} \frac{dy}{\w (y)} \) \, \frac{dt}{t},
\end{align*}
where $g$ and $\w$ are defined in \eqref{weight g} and \eqref{weight omega}.
Our goal is to show 
\begin{align}\label{TJ}
	\sup_{x \in \re^2, \,r >0} T(x,r) J(x, r)^{\frac{p}{2}} < \infty.
\end{align}
According to the value of $r >0$, we divide the proof into two parts.

%\noindent
%{\bf (I)} The case where $0 < r \le \frac{|x|}{2} < \frac{1}{2}$.
%
%\noindent
%{\bf (I-a)} The case where $|x| < \frac{2}{3}$. 

\noindent
{\bf (I)} The case where $0 < r \le \frac{1}{2}$.

\noindent
{\bf (I-a)} The case where $r \le \frac{|x|}{2}$ and $|x| < \frac{2}{3}$. 

Then we see that $\frac{|x|}{2} < |x| - r < |y| < |x| + r < \frac{3}{2} |x| < 1$ for $y \in B_r (x)$. 
We fix $\tilde{a} > 1$ large enough so that the function $|y|^{-2} \( \log \frac{\tilde{a}}{|y|} \)^{-A}$ is radially decreasing on $B_1$. 
Then using $\frac{|x|}{2} < |y|$, we have
\begin{align*}
	T(x, r) \le C \int_{B_r (x)} \frac{dy}{|y|^2 \( \log \frac{\tilde{a}}{|y|} \)^A} \le \frac{C r^2}{|x|^2 \( \log \frac{2 \tilde{a}}{|x|} \)^A}.
\end{align*}
Set 
\begin{align*}
	J(x, r) &= \int_r^{\frac{|x|}{2}}  \frac{1}{\pi t^2} \( \int_{B_t (x)} \frac{dy}{\w (y)} \) \, \frac{dt}{t} + \int_{\frac{|x|}{2}}^\infty  \frac{1}{\pi t^2} \( \int_{B_t (x)} \frac{dy}{\w (y)} \) \, \frac{dt}{t} \\
	&=: J_1 (x, r) + J_2 (x).
\end{align*}
For $J_1$, since $\( \log \frac{a}{|y|} \)^{-B} \le \( \log \frac{2a}{3|x|} \)^{-B}$ for $|y| < \frac{3}{2} |x|$, we have
\begin{align}
\label{J_1 estimate}
	J_1(x, r) &= \int_r^{\frac{|x|}{2}}  \frac{1}{\pi t^2} \( \int_{B_t (x)} \frac{dy}{\( \log \frac{a}{|y|} \)^B} \) \, \frac{dt}{t}
	\le \( \log \frac{2a}{3|x|} \)^{-B} \int_r^{\frac{|x|}{2}} \frac{dt}{t} \notag \\ 
	&= \( \log \frac{2a}{3|x|} \)^{-B} \log \frac{|x|}{2r}.
\end{align}
For $J_2$, note that $B_t(x) \subset B_1(0)$ if $0 < t < \frac{1}{3}$, since $|x| < \frac{2}{3}$.
Also note that 
\begin{align*}
	&\int_{B_1 (0)} \frac{dy}{\( \log \frac{a}{|y|} \)^B} \le 2\pi (\log a)^{-B} < \infty, \quad \text{for }  a > 1, B > 0, \\
%	&\int_{B_R(0) \cap B_1(0)^c} \frac{dy}{|y|^\gamma \( \log a \)^B} \le (\log a)^{-B} \( \frac{R^{2-\gamma} - 1}{2-\gamma} \) \quad \text{for any } R > 1 \, \text{and} \, 0 < \gamma < 2.  
	&\int_{B_R(0) \cap B_1(0)^c} \frac{dy}{|y|^\gamma \( \log a \)^B} \le (\log a)^{-B} \int_1^R s^{1-\gamma} \, ds \quad \text{for any } R > 1. 
\end{align*}
Thus we have
\begin{align}
\label{J_2 estimate}
	J_2 (x) &= \int_{\frac{|x|}{2}}^\infty  \frac{1}{\pi t^2} \( \int_{B_t (x) \cap B_1 (0)} \frac{dy}{\( \log \frac{a}{|y|} \)^B} 
	+ \int_{B_t (x) \cap B_1 (0)^c} \frac{dy}{|y|^\gamma \( \log a \)^B} \) \, \frac{dt}{t} \notag \\
	&\le \int_{\frac{|x|}{2}}^{\frac{1}{3}} \frac{1}{\pi t^2} \( \int_{B_t (x)} \frac{dy}{\( \log \frac{a}{|y|} \)^B} \) \, \frac{dt}{t} \notag \\
	&+ \int_{\frac{1}{3}}^{\infty} \frac{1}{\pi t^2} \( \int_{B_t (x) \cap B_1 (0)} \frac{dy}{\( \log \frac{a}{|y|} \)^B} + \int_{B_{t+|x|}(0) \cap B_1 (0)^c} \frac{dy}{|y|^\gamma \( \log a \)^B} \) \, \frac{dt}{t} \notag  \\
%	&\le \int_{\frac{|x|}{2}}^{\frac{1}{3}} \frac{1}{\( \log \frac{a}{3|x|} \)^B} \, \frac{dt}{t}
%	+ C \int_{\frac{1}{3}}^\infty \( 1 +  \int_1^{|x| +t} s^{1-\gamma} \,ds \) \frac{dt}{t^3} \\
%	&\le \frac{1}{1-B} \( \log \frac{2a}{3|x|} \)^{1-B} + C.
	&\le \( \log \frac{2a}{3|x|} \)^{-B} \int_{\frac{|x|}{2}}^{\frac{1}{3}} \, \frac{dt}{t} + C (\log a)^{-B} \int_{\frac{1}{3}}^\infty \( 1 +  \int_1^{|x| + t} s^{1-\gamma} \,ds \) \frac{dt}{t^3} \notag \\
%	&\le \( \log \frac{2a}{3|x|} \)^{-B} \int_{\frac{|x|}{2}}^{\frac{1}{3}} \, \frac{dt}{t} + \int_{\frac{1}{3}}^\infty \frac{C}{t^3} \ dt + C \int_{\frac{1}{3}}^\infty  \frac{(|t + |x|)^{2-\gamma}}{t^3} \, dt \notag \\
	&\le \( \log \frac{2a}{3|x|} \)^{1-B} + C
\end{align}
since $0 < B < 1$ and $0 < \gamma < 2$.
Combining \eqref{J_1 estimate} and \eqref{J_2 estimate}, we obtain
\begin{align*}
	T(x,r) J(x, r)^{\frac{p}{2}} 
	&\le 2^{\frac{p-2}{2}} T(x,r) \(  J_1(x, r)^{\frac{p}{2}} +  J_2(x, r)^{\frac{p}{2}} \) \\
	&\le \frac{C r^2 \( \log \frac{|x|}{2r} \)^{\frac{p}{2}}}{|x|^2 \( \log \frac{2\tilde{a}}{|x|} \)^A  \( \log \frac{2a}{3|x|} \)^{\frac{Bp}{2}}} 
	+\frac{C r^2 \( \log \frac{2a}{3|x|} \)^{\frac{p}{2} (1-B) }}{|x|^2 \( \log \frac{2\tilde{a}}{|x|} \)^A }\\
	&\le C \( \frac{r}{|x|} \)^2 \( \log \frac{|x|}{2r} \)^{\frac{p}{2}} + C,
\end{align*}
where the last inequality follows from the assumption $A \ge 1 + \frac{p}{2} (1-B) \ge \frac{p}{2} (1-B)$ in Theorem \ref{Thm ineq log}. 
In the case (I-a), we have $\frac{r}{|x|} < 1/2$.
Since the function $t \mapsto t^2 (\log \frac{1}{2t})^{p/2}$ is bounded for $0 < t < 1/2$, we have the result \eqref{TJ} in this case.

%\noindent
%{\bf (I-b)} The case where $\frac{2}{3} < |x| < 1$. 

\noindent
{\bf (I-b)} The case where $r \le \frac{|x|}{2}$ and $\frac{2}{3} < |x| < 1$. 

If $\frac{1}{3} < r$, then we easily show that $T(x,r) J(x,r)^{\frac{p}{2}} \le C$ for some $C > 0$ independent of $x \in \re^2$ and $r > \frac{1}{3}$. 
Therefore, we assume that $r \le \frac{1}{3}$.
Since $\frac{1}{3} < |x| -r \le |x| - t < |x| + t \le |x| + r < \frac{4}{3}$ for $t \in [r, \frac{1}{3}]$, 
we have, as in the former case,
\begin{align*}
	T(x, r) &\le C r^2,\\
	J(x, r) &\le \int_r^{\frac{1}{3}} 2 \( \log a \)^{-B} \, \frac{dt}{t} + C (\log a)^{-B} \int_{\frac{1}{3}}^\infty  \frac{1}{\pi t^2} \( 1 + \int_1^{1+t} s^{1-\gamma} \,ds \) \, \frac{dt}{t} \\
%	&J(x, r) \le \int_r^{\frac{1}{3}} 2 \( \log a \)^{-B} \, \frac{dt}{t} + C \int_{\frac{1}{3}}^\infty  \frac{1}{t^3} \, dt + C \int_{\frac{1}{3}}^\infty \frac{(1 + t)^{2-\gamma}}{t^3} \, dt
	&\le C \( \log \frac{1}{3r} + 1 \).
\end{align*}
Thus we obtain $T(x,r) J(x,r)^{\frac{p}{2}} \le C r^2 \( \log \frac{1}{3r} + 1  \)^{\frac{p}{2}} < \infty$ for any $x \in \re^2$ and $0 < r \le \frac{1}{3}$.

%\noindent
%{\bf (II)} The case where $0 < r \le \frac{1}{2} \le \frac{|x|}{2}$.
%

\noindent
{\bf (I-c)} The case where $r \le \frac{|x|}{2}$ and $|x| \ge 1$.

If $r \le |x| -1$, then $T(x, r) =0$. 
Therefore, we assume that $r > |x| -1$. Then we see that $|x| + t < r + 1 + t \le \frac{3}{2} + t$. 
Therefore, we have
\begin{align*}
	T(x, r) &\le C r^2,\\
	J(x, r) &\le \int_r^{\frac{1}{2}} 2 \( \log a \)^{-B} \, \frac{dt}{t} + C (\log a)^{-B} \int_{\frac{1}{2}}^\infty  \frac{1}{\pi t^2} \( 1 + \int_1^{\frac{3}{2} + t} s^{1-\gamma} \,ds \) \, \frac{dt}{t} \\
	&\le C \( \log \frac{1}{2r} + 1 \).
%	T(x,r) J(x,r)^{\frac{p}{2}} &\le C r^2 \( \log \frac{1}{2r} + 1  \)^{\frac{p}{2}} < \infty.
\end{align*}
Thus we obtain $T(x,r) J(x,r)^{\frac{p}{2}} \le C r^2 \( \log \frac{1}{2r} + 1  \)^{\frac{p}{2}} < \infty$ for any $x \in \re^2$ and $0 < r \le \frac{1}{2}$.

%\noindent
%{\bf (III)} The case where $\frac{|x|}{2} < r  < \frac{1}{2}$.
%
\noindent
{\bf (I-d)} The case where $\frac{|x|}{2} < r$.

We may assume that $r < \frac{1}{3}$. Then we see that $B_r (x) \subset B_{3r} (0) \subset B_1 (0)$. First, we consider the case where $|x| \le \frac{1}{2}$. From Lemma \ref{claim3}, we have
\begin{align*}
	T(x, r) &\le \int_{B_{3r} (0)} \frac{dy}{|y|^2 \( \log \frac{a}{|y|} \)^A} = \frac{2\pi}{A-1} \( \log \frac{a}{3r} \)^{1-A},\\
	J(x, r) &\le \int_r^{1-|x|} \frac{1}{\pi t^2} \int_{B_{t+|x|} (0)} \( \log \frac{a}{|y|} \)^{-B} \, \frac{dt}{t} + C \int_{1-|x|}^\infty  \frac{1}{\pi t^2} \( 1 + \int_1^{|x|+t} s^{1-\gamma} \,ds \) \, \frac{dt}{t}\\
	&\le C \int_r^{1-|x|} \( \frac{t+|x|}{t} \)^2  \( \log \frac{a}{|x|+t} \)^{-B} \, \frac{dt}{t} + C \int_{\frac{1}{2}}^\infty  \frac{1}{\pi t^2} \( 1 +  (t+1)^{2-\gamma} \) \, \frac{dt}{t} \\
	&\le C \int_r^{1-|x|} 3^3 \( \log \frac{a}{|x|+t} \)^{-B} \, \frac{dt}{t+ |x|} + C 
	\le C \( \( \log \frac{a}{r} \)^{1-B} + 1 \),
\end{align*}
Thus we have
\begin{equation*}
	T(x,r) J(x,r)^{\frac{p}{2}} \le C \( \log \frac{a}{3r} \)^{1-A} \( \( \log \frac{a}{r} \)^{\frac{p}{2} (1-B)} + 1 \).
\end{equation*}
The last expression is finite since $A \ge 1 + \frac{p}{2} (1-B)$. 
We can also show the case where $\frac{1}{2} < |x| < 1$ in the same way as above, so we omit the proof.  

\noindent
{\bf (II)} The case where $r \ge \frac{1}{2}$.

If $|x|< 2$, then we can easily see that $T(x, r)$ and $J(x,r)$ are finite. Also, if $r < |x| -1$, then $T(x, r) =0$. Therefore, we assume that $|x| \ge 2$ and $r \ge |x|-1$. 
In this case, we can also show in the same way as (I-c) that $T(x, r)$ and $J(x,r)$ are finite. 
Therefore, we omit the proof. 

%\vspace{1em}
Finally, we obtain (\ref{TJ}), which ends the proof of Lemma \ref{claim2}. 
%\end{proof}
\qed

\section*{Note added in the proof.}
After the completion of the manuscript, we have been informed by Prof. T. Horiuchi (Ibaraki University)
that the inequality in Theorem \ref{Thm ineq log} is a special case of a series of weighted inequalities proved in his recent paper \cite{H} (Theorem 3.1.). 
His method is different from that in our paper and do not exploit the weighted nonlinear potential theory.
We thank Prof. Horiuchi for informing us of the fact.

%\begin{theorem}\label{Thm ineq log a=1}
%Let $B<1, p \ge 2, A = 1+ \frac{p}{2} (1-B)$. Then there exists a positive constant $C$ such that the inequality
%\begin{align}\label{ineq log a=1}
%C \( \int_{B_1^2} \frac{|w|^p}{|x|^2 (\log \frac{1}{|x|})^A} \,dx \)^{\frac{2}{p}} 
%\le \int_{B_1^2} \( \log \frac{1 }{|x|} \)^{B} |\nabla w|^2 \,dx
%\end{align}
%holds for any $w \in C_{c, \text{rad}}^\infty (\Omega)$. 
%\end{theorem}

%%%%%%%%%%%%%%%%%%%%%%%%%%%%%%%%%%%%%%%%%%%%%%%%%%%%%%%
%
% \S5 Appendix
%
%%%%%%%%%%%%%%%%%%%%%%%%%%%%%%%%%%%%%%%%%%%%%%%%%%%%%%%%%%%%%%%%%%%%%%%%%%%%%%%%%%%%%%%%%%%%%%%%%%%%%%%%%%%%%%%%%%%
%\section{Appendix: }\label{S App}

%%%%%%%%%%%%%%%%%%%%%%%%%%%%%%%%%%%%%%%%%%%%%%%%%%%%%%%%%%%%%%%%%%%%%%%%%%%%%%%%%%%%%%%%%%

\section*{Acknowledgment}
%The authors would like to thank the referee for careful reading and constructive comments. 
The first author (M.S.) was supported by JSPS KAKENHI Early-Career Scientists, No. JP19K14568. 
The second author (F.T.) was supported by JSPS Grant-in-Aid for Scientific Research (B), No. JP19136384. 
This work was partly supported by Osaka City University Advanced Mathematical Institute (MEXT Joint Usage/Research Center on Mathematics and Theoretical Physics).

%%%%%%%%%%%%%%%%%%%%%%%%%%%%%%%%%%%%%%%%%%%%%%%%%%%%%%%%%%%%%%%%%%%%

\end{document}